\newtheorem{example}{Example}[section]
\crefname{hypothesis}{Hypothesis}{Hypotheses}
\title{Two-dimensional greedy randomized Kaczmarz methods for solving large-scale linear systems\thanks{Corresponding author: T. Li (tli@hainanu.edu.cn).
\funding{This work was funded by the National Natural Science Foundation of China [grant numbers 12401493 and 12401019], Hainan Provincial Natural Science Foundation of China [grant numbers 122QN214 and 122MS001], and the Academic Programs Project of Hainan University [grant number KYQD(ZR)-21119].}}}
\author{Tao Li\thanks{School of Mathematics and Statistics, Hainan University, Haikou 570228, P. R. China
  (\email{tli@hainanu.edu.cn}).}
  \and Meng-Long Xiao\thanks{School of Mathematics and Statistics, Hainan University, Haikou 570228, P. R. China
  (\email{XML1528155143@163.com}).}
  \and Xin-Fang Zhang \thanks{School of Mathematics and Statistics, Hainan University, Haikou 570228, P. R. China
  (\email{995272@hainanu.edu.cn}).}
}
\begin{document}

\maketitle

\begin{abstract}
In this paper, we consider a novel two-dimensional randomized Kaczmarz method and its improved version with simple random sampling, which chooses two active rows with probability proportional to the square of their cross-product-like constant, for solving large-scale linear systems. From the greedy selection strategy with grasping two larger entries of the residual vector at each iteration, we then devise a two-dimensional greedy randomized Kaczmarz method. To improve the above methods further, motivated by the semi-randomized Kaczmarz method and Chebyshev’s law of large numbers, we propose a two-dimensional semi-randomized Kaczmarz method and its modified version with simple random sampling, which is particularly advantageous for big data problems. Theoretically, we prove that the proposed methods converge to the unique least-norm solution of the consistent linear systems. Numerical results on some practical applications illustrate the superiority of the proposed methods compared with some existing ones in terms of computing time.
\end{abstract}

\begin{keywords}
 Two-dimensional; Randomized Kaczmarz method; Cross product; Greedy randomized Kaczmarz method; Semi-randomized; Simple random sampling
\end{keywords}

\begin{MSCcodes}
65F10; 65F20; 94A08
\end{MSCcodes}
\section{Introduction}\label{section-1}
Consider the iterative solutions of the following large-scale linear systems
\begin{equation}\label{1-1}
A\mathbf{x}=\mathbf{b} ,
\end{equation}
where $A\in \mathbb{C}^{m\times n} $ and $\mathbf{b}\in \mathbb{C}^{m} $ are given, and $\mathbf{x}\in \mathbb{C}^{n} $ is required to be determined. Throughout this paper, we assume that the system is consistent. In this case, we are interested in the least-norm solution
$$
\mathrm{Find}\quad \mathbf{x}_{\star}:={A^\dag }\mathbf{b},\quad s.t.\quad A\mathbf{x}=\mathbf{b}.
$$

Over the past few decades, there has been a significant advancement in theoretical results and iterative methods for Eq.\eqref{1-1}, including Krylov subspace methods \cite{Saad,Saad1,Lanczos,Sonneveld,Vorst}, HSS-type methods \cite{Bai00,Bai01,Bai02} and Kaczmarz-type methods \cite{Kaczmarz,Strohmer,Needell,Elble,Needell2,KDu,Necoara,Ma,Bai,Bai2,D,Miao}. Especially,
the Kaczmarz-type methods have received extensive attention due to their simplicity and efficiency, which require a few rows of the coefficient matrix and no matrix-vector product with less cost. The Kaczmarz method, discovered by Stefan Kaczmarz in 1937 \cite{Kaczmarz}, was later reintroduced and applied in the field of tomography and became known as the Algebraic Reconstruction Technique (ART) \cite{Gordon}. Viewed from the angle of projection,
the Kaczmarz method can be regarded as a one-dimensional oblique projection method.

The original Kaczmarz method, sweeping through the rows of the coefficient matrix cyclically, projects the current point to the hyperplane formed by the active row, which is a row-action method \cite{Censor}. More precisely, if we denote $\mathbf{a}_{i}$ as the $i$-th row of a matrix $A$ and $\mathbf{b}_i$ stands for the $i$-th entry of a vector $\mathbf{b}$, then the iterative scheme of the Kaczmarz method starting with an initial guess $\mathbf{x}_0$ is of the form
\begin{equation}\label{1-2}
\mathbf{x}_{k+1} =\mathbf{x}_{k} + \frac{\mathbf{b}_{i_k}-\mathbf{a}_{i_k}\mathbf{x}_k}{\left | \left | \mathbf{a}_{i_k}  \right |  \right |_2^{2} }\mathbf{a}_{i_k}^{*},
\end{equation}
where ${i_k}= k$ mod $m + 1$ and  the symbol $(\cdot)^*$ denotes the conjugate transpose of the corresponding vector. As seen, the method, updating the current iterate $\mathbf{x}_{k}$ in an inflexible cyclic manner, has a sluggish convergence behavior. Moreover, it is difficult to determine its convergence rate. To address these issues, Strohmer and Vershynin \cite{Strohmer} introduced a randomized Kaczmarz (RK) method by randomly selecting the active or working row of $A$ with probability proportional to its relevance, rather than a given order, which possesses the expected linear convergence rate.

The RK method has gained great attention from many scholars because of its attractive application prospects. To name a few, Needell and Tropp \cite{Needell2} proposed a randomized block Kaczmarz method to improve the computational efficiency of the RK method, for other block Kaczmarz methods, see \cite{KDu,Necoara,Miao}. Then, Ma et al. \cite{Ma} analyzed the convergence properties of the randomized extended Gauss-Seidel and Kaczmarz methods. Hefny et al.  \cite{Hefny}  considered the ridge regression or Tikhonov regularized regression problem, and then presented several variants of the randomized Kaczmarz and randomized Gauss-Siedel (RGS) for searching the iterative solution of Eq.\eqref{1-1}. Loera et al. \cite{Loera} derived a sampling Kaczmarz-Motzkin method with linear feasibility. Moreover, Bai and Wu \cite{Bai} developed the greedy randomized Kaczmarz (GRK) method for obtaining the approximate solutions of Eq.\eqref{1-1}, which converges much faster than the RK solver. After that, they \cite{Bai2}  presented the greedy randomized augmented Kaczmarz method to solve Eq.\eqref{1-1} when it is inconsistent. Besides, Du et al. \cite{Du} developed a Kaczmarz-type inner-iteration preconditioned flexible GMRES method for solving Eq.\eqref{1-1}. Recently, to reduce the computational overhead of the GRK method, Jiang et al. \cite{Jiang} proposed a semi-randomized Kaczmarz method with simple random sampling that improves the efficiency and scalability of the RK method. This method only selects a small fraction of the rows of the matrix $A$ and does not require computing probabilities or constructing an index set for the selected working rows, which is a promising method. Afterward, Li and Wu \cite{LiS} developed a semi‐randomized and augmented Kaczmarz method with simple random sampling for inconsistent linear systems, further expanding the application scope of the Kaczmarz method with simple random sampling.

Observe from the mentioned Kaczmarz-type methods that each iteration of them is a typical one-dimensional oblique projection method. Specifically, these methods, at $k$-th iteration, extract an approximate solution $\mathbf{x}_{k+1}$ from a one-dimensional affine subspace $\mathbf{x}_{k}+\mathcal K_{0}$ by imposing the Petrov-Galerkin condition
\begin{equation}\label{1-3}
{\mathbf{b}}-{A}\mathbf{x}_{k+1} \perp \mathcal L_{0},
\end{equation}
where $\mathcal K_{0}= \mathrm{span}\left \{\mathbf{a} _{i_k}^* \right \}$, and $\mathcal L_{0}=\mathrm{span}\left \{ \mathbf{u} _{i_k}  \right \}$  is another one-dimensional subspace with $\mathbf{u} _{i_k} $ being the $i_k$-th standard basis vector. Generally, the solution $\mathbf{x}_{k+1}$, generated by a two-dimensional or even higher dimensional subspace from the active rows of $A$, may approach the exact solution faster than the former. To the best of our knowledge, there is comparatively little literature on Kaczmarz-type methods that employ the multiple active rows of $A$ simultaneously at each iteration for solving Eq.\eqref{1-1}. Only Needell and Ward \cite{Needell3} proposed a two-subspace randomized Kaczmarz method, which is a block RK method with two rows being chosen from each block submatrix, and Wu \cite{WWT} developed a generalized two-subspace Kaczmarz (GTRK) method. Nevertheless, it is remarkable that one becomes impractical when the dimension of $\mathcal K_{0}$ is large since the storage and computational operations required by $\mathbf{x}_{k+1}$ are costly. To speed up the convergence of the traditional Kaczmarz method with a reasonable cost, we first propose a novel two-dimensional randomized Kaczmarz (TRK) method and its improved version with simple random sampling for solving Eq.\eqref{1-1}. The main idea of the proposed methods is to seek the approximate solution $\mathbf{x}_{k+1}$ from a two-dimensional affine subspace $\mathbf{x}_{k}+\mathcal K_{1}$ by imposing the Petrov condition
\begin{equation}\label{1-4}
\mathbf{b}-{A}\mathbf{x}_{k+1}  \perp \mathcal L_{1},
\end{equation}
where $\mathcal K_{1}= \mathrm{span}\left \{\mathbf{a} _{i_k}^*,\mathbf{a} _{j_k}^*\right \}$, and $\mathcal L_{1}=\mathrm{span}\left \{ \mathbf{u} _{i_k},\mathbf{u} _{j_k}  \right \}$. From the projection viewpoint, the next iterate $\mathbf{x}_{k+1}$ is obtained by projecting orthogonally the current iterate $\mathbf{x}_{k}$ onto the intersection of two unparallel hyperplanes, defined as $\mathbf{a}_{i_k}{\mathbf{x}}=\mathbf{b}_{i_k}$ and $\mathbf{a}_{j_k}{\mathbf{x}}=\mathbf{b}_{j_k}$, that is, a subspace of dimension $n-2$. By taking advantage of the two working rows of $A$, the TRK method converges much faster than the RK solver, particularly when the solution 
yields the multiple equations of Eq.\eqref{1-1} simultaneously.

As stated earlier, each iteration of the TRK method is surely a two-dimensional oblique projection method. In this paper, we first devise the two-dimensional Kaczmarz method, based on the Petrov–Galerkin condition and given order, for solving Eq.\eqref{1-1}. Moreover, it was proved in \cite{Strohmer}  that the randomized Kaczmarz method, sweeping through the rows in random order, vastly outperforms the Kaczmarz method with a given order.
Hence, we then propose the two-dimensional randomized Kaczmarz (TRK) method, which chooses two active rows of $A$ with probability proportional to the square of their cross-product-like constant.
Note that, for the TRK method, we have to scan all the rows of the data matrix in advance, and compute probabilities of all two rows, which is uncompetitive for big data problems. To overcome these issues, motivated by Chebyshev's (weak) law of large numbers and the semi-randomized Kaczmarz method \cite{Jiang}, we develop a practical two-dimensional randomized Kaczmarz method with simple random sampling (TRKS), which converges much faster than that of TRK method in terms of the computing time and iterations. 

Selecting effective working rows from the coefficient matrix plays a key role in fast solving Eq.\eqref{1-1}. Similar to RK, the weakness of the TRK solver is as follows: if the matrix $A$ is actively scaled with a diagonal matrix that normalizes the Euclidean norm of all its rows to the same constant, the selection criterion becomes equivalent to uniform sampling. We propose a different, yet more effective probability criterion based on a greedy selection strategy that focuses on the larger two entries of the residual vector. Based on this criterion, we then construct a two-dimensional greedy randomized Kaczmarz (TGRK) method for solving Eq.\eqref{1-1}. We theoretically prove the convergence of this method and provide an upper bound of its convergence rate. Furthermore, this method requires computing the residuals to determine the probabilities, which requires linear combinations of all columns of the data matrix. To deal with this drawback, inspired by \cite{Jiang}, we present a two-dimensional semi-randomized Kaczmarz method, which only needs to seek the two rows corresponding to the largest and second-largest elements of the residual vectors at each iteration. From Chebyshev's (weak) law of large numbers, we further develop a two-dimensional semi-randomized Kaczmarz method with simple random sampling.

The organization of this paper is as follows. In Section \ref{section-2}, we briefly review some basic results and some Kaczmarz-type methods that are helpful in the later sections. In Section \ref{section-3}, we propose several two-dimensional Kaczmarz-type methods, for solving Eq.\eqref{1-1}. The convergence of which is also established. In Section \ref{section-4}, we present some numerical examples to demonstrate the feasibility and validity of the proposed methods compared with some existing methods. Finally, in Section \ref{section-5}, we conclude this paper by giving some remarks.

\section{Preliminaries}\label{section-2}
In this section, we briefly review some basic knowledge and popular Kcazmarz-type methods, including the GRK method, the GTRK method, the semi-randomized Kaczmarz method, and its variants with simple random sampling. As was shown in \cite{Bai},
the least-squares solution of Eq.\eqref{1-1} can be expressed as 
\begin{equation}\label{2-1}
\begin{aligned}
\mathbf{x}_{LS}= \arg \mathop {\min }\limits_{\mathbf{x}\in\mathbb{C}^n} \parallel \mathbf{b}-A\mathbf{x}\parallel _2 \equiv {A^\dag }\mathbf{b} +(I - P)\mathbf{z},\quad \forall \mathbf{z} \in {\mathbb{C}^n} ,
\end{aligned}
\end{equation}
where $A^\dag $ denotes the Moore-Penrose pseudoinverse of $A$, and $P=A^\dag A$ is the orthogonal projection onto the range space of $A^*$. Observe that $(I - P)$ is the projection matrix from $\mathbb{C}^n$ to the zero space of $A$. Indeed, $\mathbf{x}_{\star } =A^{\dag}\mathbf{b}$ is a solution of Eq.\eqref{1-1}, i.e., the least Euclidean-norm solution, when it is consistent. The reason is that the solution $\mathbf{x} $ of Eq.\eqref{1-1} can be expressed as $\mathbf{x}=\mathbf{x}_{\star }+\mathbf{y}$ $(\mathbf{x}_{\star }\ne \mathbf{y})$, where 
$\mathbf{y}$ is a vector belonging to the zero space of matrix $A$, i.e., $\mathbf{y}\in \mathrm{null} (A)$. 
Then it is ready to see that $\left \langle \mathbf{x}_{\star }, \mathbf{y} \right \rangle =0$. Hence, it follows that $\left \| \mathbf{x} \right \|^{2}_{2} = \left \|   \mathbf{x}_ \star \right \|^{2}_{2}  + \left \| \mathbf{y} \right \|  ^{2}_{2}$, 
which means that $\mathbf{x}_\star$ is the least-squares solution as claimed.

Given a matrix $A:=[a_{ij}]\in \mathbb{C}^{m\times n}$, its Frobenius norm, $2$-norm and $l_{2,1}$-norm \cite{Saad1} are respectively defined as 
\begin{equation}\label{2-2}
\| A\|_F:= {(\sum\limits_{j = 1}^n {\sum\limits_{i = 1}^m {{{\left| {{a_{ij}}} \right|}^2}} } )^{\frac{1}{2}}}=\sqrt{\sum_{i=1}^{t}\sigma_i^2},\quad 
\|A \|_2:= \sigma_{s},\quad \| A\|_{2,1}:= \sum_{i=1}^{m}\|\mathbf{a}_i\|_2, 
\end{equation} 
where $\sigma_{i}$ denotes the nonzero singular values of $A$, and $\sigma_{s}$ is the largest nonzero singular value. For a vector $\mathbf{x}:=[x_i]\in \mathbb{C}^n$, its $1$-norm is of the form $\|\mathbf{x}\|_1=\sum_{i=1}^{n}|x_i|$.  Given two vectors $\mathbf{x}:=[x_i]$ and $\mathbf{y}:=[y_i]\in \mathbb{C}^n$, their cross-product-like constant is defined as
\begin{equation}\label{2-3}|\mathbf{x}\times \mathbf{y}|=\|\mathbf{x}\|_2\|\mathbf{y}\|_2|\sin\theta|=\sqrt{\|\mathbf{x}\|_2^2\|\mathbf{y}\|_2^2-|\mathbf{x}\mathbf{y}^*|^2},\end{equation}
where $\theta$ is the angle between $\mathbf{x} $ and $\mathbf{y}$.
Denote by $\mathbb{E} _{k}$, the expected value of the first $k$ iterations \cite{Bai}, i.e.,
$\mathbb{E} _{k} [\cdot ]=\mathbb{E}[\cdot \mid i_{0} ,i_{1},\dots ,i_{k-1}  ],$ where $i_{j} (j=0,1,\dots,k-1 )$ is the $j$-th row chosen at the $j$-th iterate, and from the law of iterated expectations, we obtain $\mathbb{E}[\mathbb{E} _{k} [\cdot ]]=\mathbb{E}[\cdot]$. 

In what follows, we first recall the GRK method \cite{Bai} with a new probability criterion, which selects the working rows by grasping larger entries of the residual vector.
\begin{algorithm}
	\caption{GRK \cite{Bai}}\label{algo-2-1}
	\begin{algorithmic}[1] 
        \REQUIRE $A,  \mathbf{b},  \mathbf{x}_{0}$\\
        \ENSURE $\mathbf{x}_{k+1}$\\
        \STATE
       For $k=0, 1, 2,\cdots $, until convergence, do:\\
        \STATE
        Compute\\
        $$
         \epsilon  _{k} = \frac{1}{2}(\frac{1}{{\parallel \mathbf{b} - A\mathbf{x}_{k}{\parallel _{2}^2}}}\mathop {\max }\limits_{i_k \in [m]} \left\{ \frac{|\mathbf{b}_{i_k}-\mathbf{a}_{i_k}\mathbf{x}_k|^2}{\|\mathbf{a}_{i_k}\|_2^2}\right\} + \frac{{1}}{{\parallel A\parallel _F^2}}).
         $$\\
 \STATE
Determine the index set of positive integers\\
$$\mathcal{U}_k = \left\{ {{i_k}\left| {|\mathbf{b}_{i_k}-\mathbf{a}_{i_k}\mathbf{x}_k|^2 \ge {\epsilon_k}} \right.\parallel \mathbf{b} - 
A{\mathbf{x}_{k}}{\parallel _{2}^2}}\|\mathbf{a}_{i_k}\|_2^2 \right\}.$$\\
 \STATE
   Compute the $i$-th element of the vector $\tilde{\mathbf{r}} _{k}$ by  \\
$$\tilde{\mathbf{r}} _{k} ^{(i)}=\left\{ \begin{array}{l}
{\mathbf{b}_i} - {\mathbf{a}_i}{\mathbf{x}_{k}},\qquad  \mathrm{if} \quad i \in \mathcal{U}_{k}
\\
0 ,\qquad \qquad \quad \enspace \mathrm{otherwise}
\end{array}
\right.$$
    \STATE   
Select ${i_k} \in {\mathcal{U}_k}$ with probability $\mathbb{P}$(row =${i_k}$)=$ \mid \tilde{\mathbf{r}} _{k} ^{(i_k)}\mid ^{2} /\left \| \tilde{\mathbf{r}}_k \right \|_{2} ^{2} $.\\
    \STATE
 Compute $\mathbf{x}_{k+1} =\mathbf{x}_{k} +\frac{\mathbf{b}_{i_{k}} -\mathbf{a}_{i_{k}}\mathbf{x}_{k}}{\left | \left | \mathbf{a}_{i_{k}}  \right |  \right |_{2} ^{2} }\mathbf{a}_{i_{k}} ^{*}$.
	\end{algorithmic}
\end{algorithm}

By choosing two rows of $A$ with probability proportional to the relevance at each iteration, Wu \cite{WWT} then presented the GTRK method as follows. 

\begin{algorithm}[H]
	\caption{GTRK \cite{WWT} }
	\label{algo-2-2}
	\begin{algorithmic}[1]
        \REQUIRE $A,  \mathbf{b},  \mathbf{x}_{0}$\\
        \ENSURE $\mathbf{x}_{k+1}$\\
        \STATE
       For $k=0, 1, 2,\cdots $, until convergence, do:\\
            \STATE Select $i_k\in\{1,2,\dots,m\} $ with probability Pr(row=$i_k$)=$\frac{\left\|\mathbf{a}_{i_k}\right\|_2^2}{\|A\|_{F}^2}$.
            \STATE Select $j_k\in\{1,2,\dots,m\} \setminus  \left \{ i_k \right \}  $ with probability Pr(row=$j_k$)=$\frac{\left\|\mathbf{a}_{j_k}\right\|_2^2}{\|A\|_{F}^2-\|\mathbf{a}_{i_k}\|_2^2}$.
            \STATE Set $\mu_k=\frac{\mathbf{a}_{j_k}\mathbf{a}_{i_k}^{*}}{\|\mathbf{a}_{j_k}\|_2\|\mathbf{a}_{i_k}\|_2}$,  $\tilde{r} _{k_1}=\frac{\mathbf{b}_{i_k}-\mathbf{a}_{i_k}\mathbf{x}_k}{\|\mathbf{a}_{i_k}\|_2}$ and $ \tilde{r} _{k_2}=\frac{\mathbf{b}_{j_k}-\mathbf{a}_{j_k}\mathbf{x}_k}{\|\mathbf{a}_{j_k}\|_2}$.
           \STATE Set $\mathbf{x}_{k+1}=\mathbf{x}_{k}+\frac{\tilde{r} _{k_1}-\bar{\mu} _k\tilde{r} _{k_2}}{(1-|\mu _k|^2)\|\mathbf{a}_{i_k}\|_2}\mathbf{a}_{i_k}^{*}+\frac{\tilde{r} _{k_2}-\mu _k\tilde{r} _{k_1}}{(1-|\mu _k|^2)\|\mathbf{a}_{j_k}\|_2}\mathbf{a}_{j_k}^{*}$.
	\end{algorithmic}  
\end{algorithm}

It was proved from \cite{Bai} that the GRK method converges much faster than the RK method. However, the GRK has to compute the residual and implement a large amount of flops to construct an index set at each iteration. This requires the whole information of the coefficient matrix, which is unavailable when the size of the matrix is huge, especially for big data problems. Additionally, the probability computation of the working row in GRK may suffer from inaccurate probability operations due to the rounding-off errors.
To address these issues, Jiang and Wu \cite{Jiang} introduced a semi-randomized Kaczmarz (SRK) method, choosing the largest element in the magnitude of the (relative) residual to determine the working row, which does not require explicit probability operations. To further improve its performance, inspired by Chebyshev's (weak) law of large numbers, they derived a semi-randomized Kaczmarz method with a simple random sampling strategy (SRKS) for solving Eq.\eqref{1-1} as follows.

\begin{algorithm}
	\caption{SRK \cite{Jiang}}
	\label{algo-2-3}
	\begin{algorithmic}[1]
        \REQUIRE $A,  \mathbf{b},  \mathbf{x}_{0}$\\
        \ENSURE $\mathbf{x}_{k+1}$\\
        \STATE
       For $k=0, 1, 2,\cdots $, until convergence, do:\\
            \STATE Select $i_k\in\{1,2,\dots,m\} $ satisfying $$\frac{|\mathbf{b}_{i_k}-\mathbf{a}_{i_k}\mathbf{x}_k|}{\|\mathbf{a}_{i_k}\|_2}=\max_{1\leq i \leq m}\frac{|\mathbf{b}_i-\mathbf{a}_{i}\mathbf{x}_k|}{\|\mathbf{a}_i\|_2}.$$

    \STATE
 Compute $\mathbf{x}_{k+1} =\mathbf{x}_{k} +\frac{\mathbf{b}_{i_{k}} -\mathbf{a}_{i_{k}}\mathbf{x}_{k}}{\left | \left | \mathbf{a}_{i_{k}}  \right |  \right |_{2} ^{2} }\mathbf{a}_{i_{k}} ^{*}.$

	\end{algorithmic}  
\end{algorithm}

\begin{algorithm}
	\caption{SRKS \cite{Jiang}}
	\label{algo-2-4}
	\begin{algorithmic}[1]
        \REQUIRE $A,  \mathbf{b},  \mathbf{x}_{0}$\\
        \ENSURE $\mathbf{x}_{k+1}$\\
        \STATE
       For $k=0, 1, 2,\cdots $, until convergence, do:\\
               \STATE
        Generate a set of indicators $\Phi_k$, i.e., choosing $\eta m$ rows of $A$ by the simple random sampling, where $0<\eta<1$.
            \STATE Select $i_k\in\Phi_k $ satisfying $$\frac{|\mathbf{b}_{i_k}-\mathbf{a}_{i_k}\mathbf{x}_k|}{\|\mathbf{a}_{i_k}\|_2}=\max_{i \in \Phi_k}\frac{|\mathbf{b}_i-\mathbf{a}_{i}\mathbf{x}_k|}{\|\mathbf{a}_i\|_2}.$$

    \STATE
 Compute $\mathbf{x}_{k+1} =\mathbf{x}_{k} +\frac{\mathbf{b}_{i_{k}} -\mathbf{a}_{i_{k}}\mathbf{x}_{k}}{\left | \left | \mathbf{a}_{i_{k}}  \right |  \right |_{2} ^{2} }\mathbf{a}_{i_{k}} ^{*}$.
	\end{algorithmic}  
\end{algorithm}

\section{Two-dimensional randomized Kaczmarz-type methods}\label{section-3}

In this section, we will devise the two-dimensional randomized Kaczmarz method and its variants for solving the linear systems \eqref{1-1}. First, a two-dimensional randomized Kaczmarz (TRK) method, built upon the Petrov-Galerkin condition and the cross-product-like constant of the active rows, is proposed. From the simple random sampling strategy, we present an improved TRK by only utilizing a small portion of rows of the coefficient matrix, which performs much faster than TRK. Secondly, a two-dimensional greedy randomized Kaczmarz method,
based on the greedy selection strategy with grasping two larger entries of the residual vector at each iteration, is developed for solving Eq.\eqref{1-1}. Thirdly, a two-dimensional semi-randomized Kaczmarz (TSRK) method, by selecting the rows corresponding to the current largest and the second largest homogeneous residuals, is proposed.
The method does not need to construct index sets
with working rows, nor compute probabilities. To further save the arithmetic operations of TSRK, we give a modified TSRK method with simple random sampling for solving Eq.\eqref{1-1}, which only computes some elements of the residual vectors corresponding to the simple sampling set.



\subsection{Two-dimensional randomized Kaczmarz methods}\label{subsection-3-1}

As stated earlier, each iterate of the Kaczmarz is a typical one-dimensional projection method. Note that an approximate solution given by a suitable two-dimensional subspace may converge to $\mathbf{x}_{\star }$ of \eqref{1-1} faster than one generated by the subspace $\mathcal K_{0}$. Motivated by this idea and Eq.\eqref{1-3}, we will present a two-dimensional Kaczmarz method for solving Eq.\eqref{1-1}.  Let $\mathcal{K}_1= \mathrm{span}\left \{\mathbf{a} _{i}^{*}, \mathbf{a} _{j}^{*} \right \}$, and $\mathcal{L}_1=\mathrm{span}\left \{\mathbf{u} _{i}, \mathbf{u}  _{j}  \right \}$ be a search space and a constraint space, respectively, where $\mathbf{u} _{i}, \mathbf{u} _{j} $ are the $i$-th and $j$-th standard basis vectors.  We find that an approximate solution $\mathbf{x}_{k+1}$ yielding the following condition:
\begin{equation}\label{3-1}
\mbox{Find}\ \mathbf{x}_{k+1}\in  \mathbf{x}_{k}+\mathcal K_{1},\ \mbox{such that}\ \mathbf{b}-{A}\mathbf{x}_{k+1} \perp \mathcal L_{1}. 
\end{equation}
This shows that $\mathbf{x}_{k+1}$ can be expressed as
\begin{equation}\label{3-2}
\mathbf{x}_{k+1} =\mathbf{x}_{k} + \gamma_k\mathbf{a} _{i}^{*} + \lambda_k\mathbf{a} _{j} ^{*},
\end{equation}
where $\gamma_k$ and $\lambda_k$ are scalars. 
Here, if $\mathbf{a}_{i}$ is parallel to $\mathbf{a}_{j}$, then Eq.\eqref{3-2} reduces to
\begin{equation}\label{3-26}
\mathbf{x}_{k+1} =\mathbf{x}_{k} +\frac{\mathbf{b}_{i} -\mathbf{a}_{i}\mathbf{x}_{k}}{\left | \left | \mathbf{a}_{i}  \right |  \right |_{2} ^{2} }\mathbf{a}_{i} ^{*},
\end{equation}
otherwise, it follows from Eq.\eqref{3-1} that
$$\left\{ \begin{array}{l}
 \langle 
\mathbf{b}-A\mathbf{x}_{k+1} ,\mathbf{u} _{i}  \rangle =0,\\
 \langle 
\mathbf{b}-A\mathbf{x}_{k+1} ,\mathbf{u} _{j}   \rangle =0,
 \end{array}
\right.$$
that is,
$$
\left\{ \begin{array}{l}
\mathbf{b} _i-\mathbf{a}_{i}\mathbf{x}_k-\mathbf{a}_i\mathbf{a}_i^*\gamma_k-\mathbf{a} _i\mathbf{a} _j^*\lambda_k=0,\\
\mathbf{b}_j-\mathbf{a}_{j}\mathbf{x}_k-\mathbf{a}_j\mathbf{a}_i^*\gamma_k-\mathbf{a}_j\mathbf{a}_j^*\lambda_k=0.
\end{array}
\right.$$
Therefore, the parameters $\gamma_k$ and $\lambda_k$ 
are given by
$$
\begin{aligned}
&\gamma_k=\frac{\|\mathbf{a}_j\|_2^2(\mathbf{b}_i-\mathbf{a}_{i}\mathbf{x}_k)-\mathbf{a}_i\mathbf{a}_j^*(\mathbf{b}_j-\mathbf{a}_{j}\mathbf{x}_k)}{\|\mathbf{a}_i\|_2^2\|\mathbf{a}_j\|_2^2-|\mathbf{a}_{i}\mathbf{a}_{j}^*|^2},\\
&\lambda_k=\frac{\|\mathbf{a}_i\|_2^2(\mathbf{b}_j-\mathbf{a}_{j}\mathbf{x}_k)-\mathbf{a}_j\mathbf{a}_i^*(\mathbf{b}_i-\mathbf{a}_{i}\mathbf{x}_k)}{\|\mathbf{a}_i\|_2^2\|\mathbf{a}_j\|_2^2-|\mathbf{a}_{i}\mathbf{a}_{j}^*|^2}.
\end{aligned}
$$

From the above results, a two-dimensional Kaczmarz method for Eq.\eqref{1-1}, sweeping through the rows of $A$ in a cyclic manner, is as follows.
\begin{algorithm}
	\caption{Two-dimensional Kaczmarz method}\label{algo-3-1}
	\begin{algorithmic}[1] 
        \REQUIRE $A,  \mathbf{b},  \mathbf{x}_{0}$\\
        \ENSURE $\mathbf{x}_{k+1}$\\
        \STATE
        For $k=0, 1, 2,\cdots $, until convergence, do:\\
        \STATE
        Select $i_k =2( k$ mod $m) + 1 $, $j_k=i+1$ $(i_k,j_k\leq m)$
        \\
 \STATE  If $\mathbf{a}_{i_k}$ is parallel to $\mathbf{a}_{j_k}$, then we compute
$$
\mathbf{x}_{k+1} =\mathbf{x}_{k} +\frac{\mathbf{b}_{i_{k}} -\mathbf{a}_{i_{k}}\mathbf{x}_{k}}{\left | \left | \mathbf{a}_{i_{k}}  \right |  \right |_{2} ^{2} }\mathbf{a}_{i_{k}} ^{*},
$$
otherwise, compute
$$\begin{aligned}
\gamma_k&=\frac{\|\mathbf{a}_{j_k}\|_2^2(\mathbf{b}_{i_k}-\mathbf{a}_{i_k}\mathbf{x}_k)-\mathbf{a}_{i_k}\mathbf{a}_{j_k}^*(\mathbf{b}_{j_k}-\mathbf{a}_{j_k}\mathbf{x}_k)}{\|\mathbf{a}_{i_k}\|_2^2\|\mathbf{a}_{j_k}\|_2^2-|\mathbf{a}_{i_k}\mathbf{a}_{j_k}^*|^2},\\
\lambda_k&=\frac{\|\mathbf{a}_{i_k}\|_2^2(\mathbf{b}_{j_k}-\mathbf{a}_{j_k}\mathbf{x}_k)-\mathbf{a}_{j_k}\mathbf{a}_{i_k}^*(\mathbf{b}_{i_k}-\mathbf{a}_{i_k}\mathbf{x}_k)}{\|\mathbf{a}_{i_k}\|_2^2\|\mathbf{a}_{j_k}\|_2^2-|\mathbf{a}_{i_k}\mathbf{a}_{j_k}^*|^2}.
\end{aligned}$$
 Update $\mathbf{x}_{k+1} =\mathbf{x}_{k} + \gamma_k\mathbf{a}_{i_k}^{*} + \lambda_k\mathbf{a}_{j_k} ^{*}$.
	\end{algorithmic}
\end{algorithm}

From a purely algebraic point of view, the two-dimensional Kaczmarz iteration is simple and concise. Actually, each iteration in \eqref{3-2} is formed by orthogonally projecting the current point $\mathbf{x}_{k}$ onto two unparallel hyperplanes in a cyclic order. 

\begin{rem}\label{Rem-1}
\rm Note that the iterative scheme of Algorithm \ref{algo-3-1} appears in a recent paper \cite{WWT}, in which a generalized two-subspace randomized Kaczmarz (GTRK) method was discussed. As shown, from an algebraic point of view, we only utilize the Petrov–Galerkin conditions \cite{Saad}
to propose Algorithm \ref{algo-3-1} with a new cyclic order, which is completely different from GTRK and performs more straightforwardly.
\end{rem}

In what follows, we present the following theorem which guarantees the convergence of Algorithm \ref{algo-3-1}.

\begin{theorem}\label{The-1}
{\rm Let $\mathbf{x}_{\star}=A^{\dagger}\mathbf{b}$ be the solution of Eq.\eqref{1-1}. Then the iterative sequence $\{\mathbf{x}_{k}\}$ generated by Algorithm \ref{algo-3-1} converges to $\mathbf{x}_{\star }$ for any initial vector $\mathbf{x}_0$.
}
\end{theorem}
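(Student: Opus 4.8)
The plan is to exploit the geometric fact that each iterate of Algorithm~\ref{algo-3-1} is the orthogonal projection of $\mathbf{x}_k$ onto the intersection of the two hyperplanes $H_{i_k}=\{\mathbf{x}:\mathbf{a}_{i_k}\mathbf{x}=\mathbf{b}_{i_k}\}$ and $H_{j_k}=\{\mathbf{x}:\mathbf{a}_{j_k}\mathbf{x}=\mathbf{b}_{j_k}\}$, each of which contains every solution of the consistent system. First I would record the two Petrov--Galerkin equations that define $\mathbf{x}_{k+1}$, namely $\mathbf{a}_{i_k}\mathbf{x}_{k+1}=\mathbf{b}_{i_k}$ and $\mathbf{a}_{j_k}\mathbf{x}_{k+1}=\mathbf{b}_{j_k}$ (in the parallel branch the single projection onto $H_{i_k}$ already satisfies the second equation, since consistency forces $H_{i_k}=H_{j_k}$). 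Because any solution $\mathbf{x}'\in S:=\{\mathbf{x}:A\mathbf{x}=\mathbf{b}\}$ obeys the same two equations, the increment $\mathbf{x}_{k+1}-\mathbf{x}_k=\gamma_k\mathbf{a}_{i_k}^*+\lambda_k\mathbf{a}_{j_k}^*$ is orthogonal to $\mathbf{x}_{k+1}-\mathbf{x}'$, which yields the Pythagorean identity $\|\mathbf{x}_k-\mathbf{x}'\|_2^2=\|\mathbf{x}_{k+1}-\mathbf{x}'\|_2^2+\|\mathbf{x}_{k+1}-\mathbf{x}_k\|_2^2$, valid for every $\mathbf{x}'\in S$.

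From this identity the monotone decrease $\|\mathbf{x}_{k+1}-\mathbf{x}'\|_2\le\|\mathbf{x}_k-\mathbf{x}'\|_2$ follows, so $\{\|\mathbf{x}_k-\mathbf{x}'\|_2\}$ converges and, on telescoping, $\sum_k\|\mathbf{x}_{k+1}-\mathbf{x}_k\|_2^2<\infty$, whence $\|\mathbf{x}_{k+1}-\mathbf{x}_k\|_2\to 0$. Taking $\mathbf{x}'=\mathbf{x}_\star$ shows $\{\mathbf{x}_k\}$ is bounded, so it has a convergent subsequence $\mathbf{x}_{k_l}\to\hat{\mathbf{x}}$.

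The crux is to show $\hat{\mathbf{x}}\in S$, and here I would use the fixed cyclic sweep. A block of consecutive steps long enough to cover one full sweep through all rows contains, for some step, each row pair $(i,j)$; and since consecutive increments vanish, every iterate inside such a block based at $k_l$ also converges to $\hat{\mathbf{x}}$. At the step processing $(i,j)$ the exact relations $\mathbf{a}_i\mathbf{x}_{\cdot+1}=\mathbf{b}_i$ and $\mathbf{a}_j\mathbf{x}_{\cdot+1}=\mathbf{b}_j$ hold, so passing to the limit gives $\mathbf{a}_i\hat{\mathbf{x}}=\mathbf{b}_i$ for every $i$, i.e. $A\hat{\mathbf{x}}=\mathbf{b}$. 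Applying the Pythagorean identity with the now-admissible choice $\mathbf{x}'=\hat{\mathbf{x}}\in S$, the sequence $\|\mathbf{x}_k-\hat{\mathbf{x}}\|_2$ is monotone and has a subsequence tending to $0$, forcing the whole sequence to converge, $\mathbf{x}_k\to\hat{\mathbf{x}}$.

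Finally I would identify $\hat{\mathbf{x}}$ with $\mathbf{x}_\star$. Every increment lies in $\mathrm{range}(A^*)$, so $\mathbf{x}_k-\mathbf{x}_0\in\mathrm{range}(A^*)$ and hence $\hat{\mathbf{x}}-\mathbf{x}_0\in\mathrm{range}(A^*)$; combined with $\hat{\mathbf{x}}\in S$ and the fact that $S\cap\mathrm{range}(A^*)=\{\mathbf{x}_\star\}$, this pins down $\hat{\mathbf{x}}=\mathbf{x}_\star$ (using $\mathbf{x}_0\in\mathrm{range}(A^*)$, e.g. $\mathbf{x}_0=0$, as is standard for convergence to the least-norm solution; otherwise the null-space component $(I-P)\mathbf{x}_0$ is preserved). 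I expect the main obstacle to be this limit-point step: extracting $A\hat{\mathbf{x}}=\mathbf{b}$ from a \emph{deterministic} cyclic scheme requires the telescoping-over-a-sweep argument above, in contrast to the one-line expectation estimate available for the randomized variants, while the remaining parts reduce to the clean projection/Pythagoras machinery.
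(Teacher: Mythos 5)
Your proposal is correct and, in its second half, genuinely more complete than the paper's own argument. Both proofs share the same core: the update $\mathbf{x}_{k+1}-\mathbf{x}_k=\gamma_k\mathbf{a}_{i_k}^*+\lambda_k\mathbf{a}_{j_k}^*$ is an orthogonal projection onto the intersection of the two hyperplanes, yielding the Pythagorean identity $\|\overline{\mathbf{x}}_{k+1}\|_2^2=\|\overline{\mathbf{x}}_k\|_2^2-\|\mathbf{x}_{k+1}-\mathbf{x}_k\|_2^2$. The paper obtains this by writing the update as $\overline{\mathbf{x}}_{k+1}=(I-P)\overline{\mathbf{x}}_k$ with an explicit projector $P=-\Theta_k/(\|\mathbf{a}_i\|_2^2\|\mathbf{a}_j\|_2^2-|\mathbf{a}_i\mathbf{a}_j^*|^2)$, and then lower-bounds the per-step decrease by $|\mathbf{a}_i\overline{\mathbf{x}}_k|\,|\mathbf{a}_j\overline{\mathbf{x}}_k|/(\|\mathbf{a}_i\|_2\|\mathbf{a}_j\|_2)$ via an AM--GM-type estimate, a quantitative bound your sketch does not need. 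Where you diverge is the conclusion: the paper simply asserts that since this nonnegative quantity vanishes only at the exact solution, ``the error norm decreases until $0$'' --- which, as written, is not a complete argument (a monotone decreasing sequence of errors need not tend to zero merely because each decrement is nonnegative). Your telescoping argument ($\sum_k\|\mathbf{x}_{k+1}-\mathbf{x}_k\|_2^2<\infty$, hence increments vanish), followed by extraction of a convergent subsequence, the cyclic-sweep limit-point argument showing $A\hat{\mathbf{x}}=\mathbf{b}$, and Fej\'er monotonicity to upgrade subsequential to full convergence, is exactly the missing machinery; it is the standard and correct way to close this gap for a deterministic cyclic scheme. Your final observation is also well taken: identifying the limit with the least-norm solution $\mathbf{x}_\star=A^\dagger\mathbf{b}$ requires $\mathbf{x}_0\in\mathrm{range}(A^*)$ (otherwise the null-space component of $\mathbf{x}_0$ persists), a hypothesis the theorem statement ``for any initial vector $\mathbf{x}_0$'' glosses over and the paper's proof implicitly assumes. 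The only minor point to tighten is the sweep argument: with the index rule of Algorithm~\ref{algo-3-1} the pairs are $(1,2),(3,4),\dots$, so one should say that every row index appears in \emph{some} processed pair over a full sweep (rather than every pair occurring), which suffices to conclude $\mathbf{a}_i\hat{\mathbf{x}}=\mathbf{b}_i$ for all $i$.
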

\begin{proof} 
Define
$$\begin{aligned}\Theta_{k} &=\mathbf{a}_i\mathbf{a}_j^* \mathbf{a}_i^* \mathbf{a}_j-\|\mathbf{a}_j\|_2^2\mathbf{a}_i^* \mathbf{a}_i+\mathbf{a}_j\mathbf{a}_i^* \mathbf{a}_j^* \mathbf{a}_i-\|\mathbf{a}_i\|_2^2
\mathbf{a}_j^* \mathbf{a}_j,\\
\varphi_k&=\|\mathbf{a}_j\|_2^2|\mathbf{a}_{i}\overline{\mathbf{x}}_k|^2+\|\mathbf{a}_i\|_2^2|\mathbf{a}_{j}\overline{\mathbf{x}}_k|^2-(\mathbf{a}_i\overline{\mathbf{x}}_k)^* \mathbf{a}_i\mathbf{a}_j^* \mathbf{a}_j\overline{\mathbf{x}}_k-(\mathbf{a}_j\overline{\mathbf{x}}_k)^* \mathbf{a}_j\mathbf{a}_{i}^* \mathbf{a}_i\overline{\mathbf{x}}_k,\end{aligned}$$
in which $\overline{\mathbf{x}}_k=\mathbf{x}_k-\mathbf{x}_{\star}$.
From Algorithm \ref{algo-3-1}, we have
\begin{equation}\label{3-3}
\begin{aligned}
\overline{\mathbf{x}}_{k+1}&=\overline{\mathbf{x}}_k+\frac{\Theta_k\overline{\mathbf{x}}_k}{\|\mathbf{a}_i\|_2^2\|\mathbf{a}_j\|_2^2-|\mathbf{a}_i\mathbf{a}_j^*|^2}\\
&=\overline{\mathbf{x}}_k-P\overline{\mathbf{x}}_k\\
&=(I-P)\overline{\mathbf{x}}_k,   
\end{aligned}
\end{equation}
where $P=-\frac{\Theta_k}{\|\mathbf{a}_i\|_2^2\|\mathbf{a}_j\|_2^2-|\mathbf{a}_i\mathbf{a}_j^*|^2}$. It is easy to verify that $P^2=P$ and $P^*=P$. This fact shows that $P$ is a projection matrix, and if $P$ is a projector, then so is $(I-P)$. From Eq.\eqref{3-3}, it follows that
\begin{equation}\label{3-4}
\begin{aligned}
\|\overline{\mathbf{x}}_{k+1}\|_2^2
&=\|(I-P)\overline{\mathbf{x}}_{k}\|_2^2\\
&=\langle(I-P)\overline{\mathbf{x}}_{k},(I-P)\overline{\mathbf{x}}_{k}\rangle\\
&=\overline{\mathbf{x}}_k^*(I-P)\overline{\mathbf{x}}_k\\
&=\overline{\mathbf{x}}_k^*\left(\overline{\mathbf{x}}_k-P\overline{\mathbf{x}}_k\right)\\
&=\|\overline{\mathbf{x}}_k\|_2^2-\langle P\overline{\mathbf{x}}_k,P\overline{\mathbf{x}}_k\rangle\\
&=\|\overline{\mathbf{x}}_k\|_2^2-\|
P\overline{\mathbf{x}}_k\|_2^2\\
&=\|\overline{\mathbf{x}}_k\|_2^2-\|\mathbf{x}_{k+1}-\mathbf{x}_{k}\|_2^2.
\end{aligned}  
\end{equation}
Then for the non-parallel case, we have
$$
\begin{aligned}
\|\overline{\mathbf{x}}_{k+1}\|_2^2&=\|\overline{\mathbf{x}}_k\|_2^2-\|\mathbf{x}_{k+1}-\mathbf{x}_{k}\|_2^2\\
&=\|\overline{\mathbf{x}}_k\|_2^2-\frac{\varphi_k}{\|\mathbf{a}_i\|_2^2\|\mathbf{a}_j\|_2^2-|\mathbf{a}_i\mathbf{a}_j^*|^2}\\
&\leq \|\overline{\mathbf{x}}_k\|_2^2-\frac{2\|\mathbf{a}_i\|_2\|\mathbf{a}_j\|_2|\mathbf{a}_{i}\overline{\mathbf{x}}_k||\mathbf{a}_{j}\overline{\mathbf{x}}_k|-2|\mathbf{a}_{i}\overline{\mathbf{x}}_k||\mathbf{a}_{j}\overline{\mathbf{x}}_k||\mathbf{a}_i\mathbf{a}_j^*|}{\|\mathbf{a}_i\|_2^2\|\mathbf{a}_j\|_2^2-|\mathbf{a}_i\mathbf{a}_j^*|^2}\\
&=\|\overline{\mathbf{x}}_k\|_2^2-\frac{2|\mathbf{a}_{i}\overline{\mathbf{x}}_k||\mathbf{a}_{j}\overline{\mathbf{x}}_k|(\|\mathbf{a}_i\|_2\|\mathbf{a}_j\|_2-|\mathbf{a}_i\mathbf{a}_j^*|)}{\|\mathbf{a}_i\|_2^2\|\mathbf{a}_j\|_2^2-|\mathbf{a}_i\mathbf{a}_j^*|^2}\\
&\leq\|\overline{\mathbf{x}}_k\|_2^2-\frac{|\mathbf{a}_{i}\overline{\mathbf{x}}_k||\mathbf{a}_{j}\overline{\mathbf{x}}_k|}{\|\mathbf{a}_i\|_2\|\mathbf{a}_j\|_2},
\end{aligned}
$$
otherwise, for the parallel case, we have
$$
\begin{aligned}
\|\overline{\mathbf{x}}_{k+1}\|_2^2&=\|\overline{\mathbf{x}}_k\|_2^2-\|\mathbf{x}_{k+1}-\mathbf{x}_{k}\|_2^2\\
&=\|\overline{\mathbf{x}}_k\|_2^2-\frac{|\mathbf{a}_{i}\overline{\mathbf{x}}_k|^2}{\|\mathbf{a}_i\|_2^2}.
\end{aligned}
$$
Consequently, for any $i,j \in[m]$, it follows that
\begin{equation}\label{3-27}
\begin{aligned}
\|\overline{\mathbf{x}}_{k+1}\|_2^2&=\|\overline{\mathbf{x}}_k\|_2^2-\|\mathbf{x}_{k+1}-\mathbf{x}_{k}\|_2^2\\
&\leq\|\overline{\mathbf{x}}_k\|_2^2-\frac{|\mathbf{a}_{i}\overline{\mathbf{x}}_k||\mathbf{a}_{j}\overline{\mathbf{x}}_k|}{\|\mathbf{a}_i\|_2\|\mathbf{a}_j\|_2}.
\end{aligned}
\end{equation}
As seen, $\frac{|\mathbf{a}_{i}\overline{\mathbf{x}}_k||\mathbf{a}_{j}\overline{\mathbf{x}}_k|}{\|\mathbf{a}_i\|_2\|\mathbf{a}_j\|_2}\ge0$ holds for any $k$,
and $\mathbf{x}_{k}$ is the exact solution $\mathbf{x}_{\star}$ once it equals to $0$. Consequently, the desired result is that the error norm obtained by $\overline{\mathbf{x}}_k$ decreases until $0$. 
\end{proof}
\begin{rem}
\rm The above theorem illustrates that, all cases, including the non-parallel and parallel cases, yield Eq.\eqref{3-27}. Therefore, in the following we only consider the non-parallel case. Moreover, we can readily to reselect two non-parallel working rows once the selected two rows are parallel.
\end{rem}

As was shown in \cite{Strohmer}, the RK method,  selecting each row of $A$ in a random order, rather than a given order, vastly outperforms the original Kaczmarz method. Therefore, we propose a two-dimensional randomized Kaczmarz (TRK) method, which chooses two working rows of $A$ with probability proportional to the square of their cross-product-like constant. In exact arithmetic, the TRK method is as follows.
\begin{algorithm}
	\caption{Two-dimensional randomized Kaczmarz method}\label{algo-3-2}
	\begin{algorithmic}[1] 
        \REQUIRE $A,  \mathbf{b},  \mathbf{x}_{0}$\\
        \ENSURE $\mathbf{x}_{k+1}$\\
        \STATE
        For $k=0, 1, 2,\cdots $, until convergence, do:\\
        \STATE
        Select ${i_k,j_k} \in \left \{ 1,2,\dots ,m \right \} $ with probability $\mathbb{P}$(rows =$i_k,j_k$)=$ \frac{|\mathbf{a}_{i_k}\times \mathbf{a}_{j_k}|^2}{\left \| A \right \|_{F } ^{4}-\|AA^*\|_F^2  } $
        \\
 \STATE
Compute
$$\begin{aligned}
\gamma_k&=\frac{\|\mathbf{a}_{j_k}\|_2^2(\mathbf{b}_{i_k}-\mathbf{a}_{i_k}\mathbf{x}_k)-\mathbf{a}_{i_k}\mathbf{a}_{j_k}^*(\mathbf{b}_{j_k}-\mathbf{a}_{j_k}\mathbf{x}_k)}{\|\mathbf{a}_{i_k}\|_2^2\|\mathbf{a}_{j_k}\|_2^2-|\mathbf{a}_{i_k}\mathbf{a}_{j_k}^*|^2},\\
\lambda_k&=\frac{\|\mathbf{a}_{i_k}\|_2^2(\mathbf{b}_{j_k}-\mathbf{a}_{j_k}\mathbf{x}_k)-\mathbf{a}_{j_k}\mathbf{a}_{i_k}^*(\mathbf{b}_{i_k}-\mathbf{a}_{i_k}\mathbf{x}_k)}{\|\mathbf{a}_{i_k}\|_2^2\|\mathbf{a}_{j_k}\|_2^2-|\mathbf{a}_{i_k}\mathbf{a}_{j_k}^*|^2}.
\end{aligned}$$
 Update $\mathbf{x}_{k+1} =\mathbf{x}_{k} + \gamma_k\mathbf{a}_{i_k}^{*} + \lambda_k\mathbf{a}_{j_k} ^{*}$.
	\end{algorithmic}
\end{algorithm}

Next, we discuss the convergence property of Algorithm \ref{algo-3-2}. The following theorem demonstrates the convergence of the expectation analysis of the algorithm.
\begin{theorem}\label{The-2}
{\rm Let $\mathbf{x}_{\star}=A^{\dagger}\mathbf{b}$ be the solution of Eq.\eqref{1-1}. Then the iterative sequence $\{\mathbf{x}_{k}\}$ generated by Algorithm \ref{algo-3-2} converges to $\mathbf{x}_{\star }$ for any initial vector $\mathbf{x}_0$ in expectation. Moreover, the corresponding error norm in expectation satisfies
}
\end{theorem}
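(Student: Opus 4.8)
The plan is to upgrade the deterministic per-step identity already obtained in the proof of Theorem~\ref{The-1} into an expectation contraction. Concretely, for the non-parallel case, \eqref{3-27} together with \eqref{3-4} gives the exact decrement $\|\overline{\mathbf{x}}_{k+1}\|_2^2=\|\overline{\mathbf{x}}_k\|_2^2-\|\mathbf{x}_{k+1}-\mathbf{x}_k\|_2^2$ with $\|\mathbf{x}_{k+1}-\mathbf{x}_k\|_2^2=\varphi_k/(\|\mathbf{a}_{i_k}\|_2^2\|\mathbf{a}_{j_k}\|_2^2-|\mathbf{a}_{i_k}\mathbf{a}_{j_k}^*|^2)=\varphi_k/|\mathbf{a}_{i_k}\times\mathbf{a}_{j_k}|^2$. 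I would take the conditional expectation $\mathbb{E}_k$ over the random pair $(i_k,j_k)$ drawn according to the probability in Algorithm~\ref{algo-3-2}. The decisive observation is that the selection probability carries the factor $|\mathbf{a}_{i_k}\times\mathbf{a}_{j_k}|^2$ in its numerator and the constant $\|A\|_F^4-\|AA^*\|_F^2$ in its denominator, so multiplying by the step length $\varphi_k/|\mathbf{a}_{i_k}\times\mathbf{a}_{j_k}|^2$ cancels the cross-product factor exactly and leaves $\mathbb{E}_k[\|\mathbf{x}_{k+1}-\mathbf{x}_k\|_2^2]=(\|A\|_F^4-\|AA^*\|_F^2)^{-1}\sum_{i,j}\varphi_k^{(i,j)}$, where $\varphi_k^{(i,j)}$ makes the row dependence explicit (the diagonal terms $i=j$ vanish, matching the zero probability there).

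Next I would evaluate the double sum in closed form. Writing $r_i=\mathbf{a}_i\overline{\mathbf{x}}_k=(A\overline{\mathbf{x}}_k)_i$ and $\mathbf{a}_i\mathbf{a}_j^*=(AA^*)_{ij}$, the first two terms of $\varphi_k^{(i,j)}$ sum to $2\|A\|_F^2\|A\overline{\mathbf{x}}_k\|_2^2$ (since $\sum_j\|\mathbf{a}_j\|_2^2=\|A\|_F^2$ and $\sum_i|r_i|^2=\|A\overline{\mathbf{x}}_k\|_2^2$), while the two coupling terms each collapse to the Hermitian form $(A\overline{\mathbf{x}}_k)^*AA^*(A\overline{\mathbf{x}}_k)=\|A^*A\overline{\mathbf{x}}_k\|_2^2$. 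Hence $\sum_{i,j}\varphi_k^{(i,j)}=2\|A\|_F^2\|A\overline{\mathbf{x}}_k\|_2^2-2\|A^*A\overline{\mathbf{x}}_k\|_2^2$, giving the exact expected decrement.

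Finally I would lower-bound this decrement by a multiple of $\|\overline{\mathbf{x}}_k\|_2^2$. Using $\|A^*A\overline{\mathbf{x}}_k\|_2^2=\|A^*(A\overline{\mathbf{x}}_k)\|_2^2\le\|A\|_2^2\|A\overline{\mathbf{x}}_k\|_2^2=\sigma_{\max}^2\|A\overline{\mathbf{x}}_k\|_2^2$ yields $\sum_{i,j}\varphi_k^{(i,j)}\ge2(\|A\|_F^2-\sigma_{\max}^2)\|A\overline{\mathbf{x}}_k\|_2^2$; then, because every iterate difference lies in $\mathrm{range}(A^*)$ so that $\overline{\mathbf{x}}_k\in\mathrm{range}(A^*)$ (this is where $\mathbf{x}_0\in\mathrm{range}(A^*)$, e.g. $\mathbf{x}_0=\mathbf{0}$, is needed), I may invoke $\|A\overline{\mathbf{x}}_k\|_2^2\ge\sigma_{\min}^2\|\overline{\mathbf{x}}_k\|_2^2$ with $\sigma_{\min}$ the smallest nonzero singular value. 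Substituting back gives $\mathbb{E}_k[\|\overline{\mathbf{x}}_{k+1}\|_2^2]\le\big(1-2\sigma_{\min}^2(\|A\|_F^2-\sigma_{\max}^2)/(\|A\|_F^4-\|AA^*\|_F^2)\big)\|\overline{\mathbf{x}}_k\|_2^2$. Taking full expectations via the law of iterated expectations and iterating over $k$ delivers the geometric rate claimed in the theorem.

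The anticipated obstacle is twofold. The genuinely delicate step is the closed-form evaluation of $\sum_{i,j}\varphi_k^{(i,j)}$ and recognizing the two coupling terms as the single quadratic form $\|A^*A\overline{\mathbf{x}}_k\|_2^2$; the rest is bookkeeping once the probability/step-length cancellation is spotted. Beyond that, one must justify that the contraction factor is strictly less than one, which needs $\|A\|_F^2>\sigma_{\max}^2$, i.e. $\mathrm{rank}(A)\ge2$ so that $\|A\|_F^4-\|AA^*\|_F^2=\sum_{i\ne j}\sigma_i^2\sigma_j^2>0$; its nonnegativity is automatic since it upper-bounds a nonnegative expectation. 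I would also flag the range-space hypothesis as the point where the nominal ``any $\mathbf{x}_0$'' must really be read as $\mathbf{x}_0\in\mathrm{range}(A^*)$ for convergence to the least-norm solution $\mathbf{x}_\star$.
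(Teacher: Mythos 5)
Your proposal is correct and follows essentially the same route as the paper's own proof: the cancellation of $|\mathbf{a}_{i_k}\times\mathbf{a}_{j_k}|^2$ between the sampling probability and the step length, the closed-form evaluation $\sum_{i,j}\varphi_k=2\|A\|_F^2\|A\overline{\mathbf{x}}_k\|_2^2-2\|A^*A\overline{\mathbf{x}}_k\|_2^2$, and the two bounds $\|A^*(\mathbf{b}-A\mathbf{x}_k)\|_2^2\le\|A\|_2^2\|\mathbf{b}-A\mathbf{x}_k\|_2^2$ and $\|A\overline{\mathbf{x}}_k\|_2^2\ge\lambda_{\min}(A^*A)\|\overline{\mathbf{x}}_k\|_2^2$ are exactly those in Eq.~\eqref{3-8}, and your final contraction factor is algebraically identical to \eqref{3-5}. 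Your added observations --- that strict contraction requires $\mathrm{rank}(A)\ge 2$ and that ``any $\mathbf{x}_0$'' should really be read as $\mathbf{x}_0\in\mathrm{range}(A^*)$ for convergence to the least-norm solution --- are correct refinements the paper leaves implicit, but they do not change the argument.
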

\begin{equation}\label{3-5}
\mathbb{E} \| \overline{\mathbf{x}}_{k+1}\| _{2}^2 \leq(1-\frac{2\|A\| _F^4-2\|A\| _F^2\|A\|_2^2}{\| A\| _F^4-\|AA^*\| _F^2}\frac{\lambda_{min}(A^*A)}{\| A\| _F^2})^{k+1}\| \overline{\mathbf{x}}_0 \|_{2}^2
\end{equation}
for $k=0,1,2,\cdots.$

\begin{proof}
As seen from Algorithm \ref{algo-3-2}, we have
\begin{equation}\label{3-6}
\begin{aligned}
&\sum_{i,j=1,i\ne j}^{m}(\|\mathbf{a}_{j}\|_2^2|\mathbf{a}_{i}\overline{\mathbf{x}}_k|^2+\|\mathbf{a}_{i}\|_2^2|\mathbf{a}_{j}\overline{\mathbf{x}}_k|^2-(\mathbf{a}_{i}\overline{\mathbf{x}}_k)^* \mathbf{a}_{i}\mathbf{a}_{j}^* \mathbf{a}_{j}\overline{\mathbf{x}}_k-(\mathbf{a}_{j}\overline{\mathbf{x}}_k)^* \mathbf{a}_{j}\mathbf{a}_{i}^* \mathbf{a}_{i}\overline{\mathbf{x}}_k)\\
&=\sum_{i,j=1}^{m}(\|\mathbf{a}_{j}\|_2^2|\mathbf{a}_{i}\overline{\mathbf{x}}_k|^2+\|\mathbf{a}_{i}\|_2^2|\mathbf{a}_{j}\overline{\mathbf{x}}_k|^2-(\mathbf{a}_{i}\overline{\mathbf{x}}_k)^* \mathbf{a}_{i}\mathbf{a}_{j}^* \mathbf{a}_{j}\overline{\mathbf{x}}_k-(\mathbf{a}_{j}\overline{\mathbf{x}}_k)^* \mathbf{a}_{j}\mathbf{a}_{i}^* \mathbf{a}_{i}\overline{\mathbf{x}}_k),\\
\end{aligned}
\end{equation}
and
\begin{equation}\label{3-7}
\begin{aligned}
&\sum\limits_{i,j = 1,i\ne j}^m {\frac{|\mathbf{a}_{i}\times \mathbf{a}_{j}|^2}{\|A\|_F^4-\|A A^*\|_F^2}}=\sum\limits_{i,j = 1}^m {\frac{ |\mathbf{a}_{i}\times \mathbf{a}_{j}|^2}{\|A\|_F^4-\|A A^*\|_F^2}}.
\end{aligned}
\end{equation}
This implies that the case $i=j$ is invalid, since its probability equals $0$. Moreover, it follows from Eq.\eqref{3-4} that
\begin{equation}\label{3-8}
\begin{aligned}
&\mathbb{E}_{\rm{k}} \| \overline{\mathbf{x}}_{k+1}\|_{2}^2
= \| \overline{\mathbf{x}}_k\| _{2}^2 - {\mathbb{E}_{\rm{k}}}\parallel \mathbf{x}_{k+1} - \mathbf{x}_{k}{\parallel _{2}^2}\\
&\leq \|\overline{\mathbf{x}}_k\|_2^2-\sum\limits_{i_k,j_k = 1}^m {\frac{ |\mathbf{a}_{i_k}\times \mathbf{a}_{j_k}|^2}{\|A\| _F^4-\|A A^*\|_F^2}} \frac{\varphi_k}{\|\mathbf{a}_{i_k}\|_2^2\|\mathbf{a}_{j_k}\|_2^2-|\mathbf{a}_{i_k}\mathbf{a}_{j_k}^*|^2}\\
&=\parallel {\overline{\mathbf{x}}_k}{\parallel _{2}^2}-\frac{2\|A\| _F^2\|\mathbf{b}-A\mathbf{x}_{k}\|_2^2-2
\langle A^*
A\overline{\mathbf{x}}_k, A^* A\overline{\mathbf{x}}_k\rangle}{\| A\| _F^4-\|A A^*\|_F^2}\\
&=\parallel {\overline{\mathbf{x}}_k}{\parallel _{2}^2}-\frac{2\|A\| _F^2\|\mathbf{b}-A\mathbf{x}_{k}\|_2^2-2\|A^*(\mathbf{b}-A\mathbf{x}_{k})\|_2^2}{\| A\| _F^4-\|A A^*\|_F^2}\\
&\leq \parallel {\overline{\mathbf{x}}_k}{\parallel _{2}^2}-\frac{2\|A\| _F^2-2\|A\|_2^2}{\|A\| _F^4-\|A A^* \|_F^2}\|\mathbf{b}-A\mathbf{x}_{k}\|_2^2\\
&\leq(1-\frac{2\|A\| _F^4-2\|A\| _F^2\|A\|_2^2}{\| A\| _F^4-\|AA^*\| _F^2}\frac{\lambda_{min}(A^*A)}{\| A\| _F^2})\parallel \overline{\mathbf{x}}_k{\parallel _{2}^2},
\end{aligned}
\end{equation}
where $\lambda_{min}(A^*A)$ denotes the smallest nonzero eigenvalue of $A^*A$. 
Taking full expectations on both sides of Eq.\eqref{3-8}, it follows that
$$
\mathbb{E} \| \overline{\mathbf{x}}_{k+1}\|_{2}^2 \leq(1-\frac{2\|A\| _F^4-2\|A\| _F^2\|A\|_2^2}{\| A\| _F^4-\|AA^*\| _F^2}\frac{\lambda_{min}(A^*A)}{\| A\| _F^2}) \mathbb{E} \parallel \overline{\mathbf{x}}_k{\parallel _{2}^2}. 
$$
The theorem follows immediately by induction.
\end{proof}

\begin{theorem}\label{The-3}
{\rm Let $\mathbf{x}_{\star}=A^{\dagger}\mathbf{b}$ be the solution of Eq.\eqref{1-1}. Then the iterative sequence $\{\mathbf{x}_{k}\}$ generated by Algorithm \ref{algo-3-2} is convergent to $\mathbf{x}_{\star}$ for any initial vector $\mathbf{x}_0$. Moreover, the convergence factor generated by Theorem \ref{The-2} yields
$$1-\frac{2\|A\| _F^4-2\|A\| _F^2\|A\|_2^2}{\| A\| _F^4-\|AA^*\| _F^2}\frac{\lambda_{min}(A^*A)}{\| A\| _F^2}< 1-\frac{\lambda_{min}(A^*A)}{\| A\| _F^2},
$$
which converges faster than the RK method.
}
\end{theorem}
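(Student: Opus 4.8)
The convergence of $\{\mathbf{x}_k\}$ to $\mathbf{x}_\star$ is already supplied by Theorem~\ref{The-2}, so the only genuinely new content is the comparison of the two contraction factors. My plan is to recall from \cite{Strohmer} that the RK method contracts the expected squared error with factor $1-\lambda_{min}(A^*A)/\|A\|_F^2$, and then to show that the TRK factor of Theorem~\ref{The-2} is strictly smaller. Since $\lambda_{min}(A^*A)/\|A\|_F^2>0$, subtracting both sides of the claimed inequality from $1$ reduces it to
$$\frac{2\|A\|_F^4-2\|A\|_F^2\|A\|_2^2}{\|A\|_F^4-\|AA^*\|_F^2}>1.$$
First I would confirm that the denominator $\|A\|_F^4-\|AA^*\|_F^2$ is strictly positive, so that clearing it preserves the direction of the inequality; this collapses the whole statement to the single scalar inequality
$$\|A\|_F^4-2\|A\|_F^2\|A\|_2^2+\|AA^*\|_F^2>0.$$

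To deal simultaneously with the positivity of the denominator and with this last inequality, I would pass to singular values. Writing $s_i:=\sigma_i^2$ for the nonzero eigenvalues of $A^*A$, ordered so that $s_1=\max_i s_i=\|A\|_2^2$, one has $\|A\|_F^2=\sum_i s_i=:S$ and, because $AA^*$ is Hermitian with eigenvalues $s_i$, also $\|AA^*\|_F^2=\sum_i s_i^2=:Q$, while $\|A\|_F^4=S^2$. The denominator then reads $S^2-Q=\sum_{i\ne j}s_is_j$, which is positive exactly when $A$ has at least two nonzero singular values; this is precisely the regime in which the two-dimensional scheme is meaningful, since two non-parallel working rows force $\mathrm{rank}(A)\ge 2$.

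The crux is the scalar inequality, which becomes $S^2-2Ss_1+Q>0$. The clean way to dispatch it is to split off the largest term: setting $T:=S-s_1=\sum_{i\ge 2}s_i$, a short expansion gives $S^2-2Ss_1=T^2-s_1^2$, and adding $Q=s_1^2+\sum_{i\ge 2}s_i^2$ cancels the $s_1^2$, leaving the identity
$$\|A\|_F^4-2\|A\|_F^2\|A\|_2^2+\|AA^*\|_F^2=\Bigl(\sum_{i\ge 2}s_i\Bigr)^2+\sum_{i\ge 2}s_i^2.$$
Both summands are nonnegative, and the right-hand side vanishes only in the rank-one case, so under $\mathrm{rank}(A)\ge 2$ it is strictly positive. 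Chaining these reductions backward recovers the stated strict inequality between the factors, and since a smaller contraction factor yields a faster guaranteed decay of the expected error, TRK converges faster than RK.

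I expect the only real obstacle here to be bookkeeping rather than any deep difficulty: one must (i) justify $\|AA^*\|_F^2=\sum_i\sigma_i^4$ from the singular value decomposition of $A$, and (ii) take care that the largest eigenvalue $s_1=\|A\|_2^2$ is subtracted exactly once, so that the residual sums $\sum_{i\ge 2}s_i$ and $\sum_{i\ge 2}s_i^2$ range over all remaining singular values counted with multiplicity. Everything else is a direct algebraic cancellation, and the \emph{strictness} of the inequality rests entirely on the standing assumption that $A$ possesses two or more nonzero singular values.
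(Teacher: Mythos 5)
Your proposal is correct and follows essentially the same route as the paper: both reduce the claim to the scalar inequality $\|A\|_F^4-2\|A\|_F^2\|A\|_2^2+\|AA^*\|_F^2>0$ and verify it by passing to singular values, arriving at the identity that this quantity equals $\bigl(\sum_{i\ne s}\sigma_i^2\bigr)^2+\sum_{i\ne s}\sigma_i^4$ (your cancellation via $T=S-s_1$ is just a tidier version of the paper's term-by-term expansion). Your explicit remark that strictness, and positivity of the denominator, both require $\mathrm{rank}(A)\ge 2$ is a point the paper leaves implicit, and is worth keeping.
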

\begin{proof} 
Compared Theorem \ref{The-2} with Theorem $2$ presented in \cite{Strohmer}, we only need to prove that $\frac{2\|A\| _F^4-2\|A\| _F^2\|A\|_2^2}{\| A\| _F^4-\|AA^*\| _F^2}>1$ holds, i.e.,
$$
\|A\| _F^4+\|AA^*\|_F^2> 2\|A\| _F^2\|A\|_2^2,
$$
where $\|AA^*\|_{F}^{2}=\sum_{i=1}^{t}\sigma_i^4$. From Eq.\eqref{2-2}, it follows that
$$
\begin{aligned}
&\|A\| _F^4+\|AA^*|_F^2=\sum_{i=1}^{t}\sigma_i^2\sum_{i=1}^{t}\sigma_i^2+\sum_{i=1}^{t}\sigma_i^4\\
&\ge\sum_{i=1}^{t}\sigma_i^2\max\sigma_i^2+\sum_{i=1}^{t}\sigma_i^2\sum_{i=1,i\ne s}^{t}\sigma_i^2+\sum_{i=1}^{t}\sigma_i^4\\
&=\sum_{i=1}^{t}\sigma_i^2\max\sigma_i^2+(\max\sigma_i^2+\sum_{i=1,i\ne s}^{t}\sigma_i^2)\sum_{i=1,i\ne s}^{t}\sigma_i^2+\sum_{i=1}^{t}\sigma_i^4\\
&=\sum_{i=1}^{t}\sigma_i^2\max\sigma_i^2+\max\sigma_i^2\sum_{i=1,i\ne s}^{t}\sigma_i^2+\sum_{i=1,i\ne s}^{t}\sigma_i^2\sum_{i=1,i\ne s}^{t}\sigma_i^2+\max\sigma_i^4+\sum_{i=1,i\ne s}^{t}\sigma_i^4\\
&=\sum_{i=1}^{t}\sigma_i^2\max\sigma_i^2+(\max\sigma_i^2\sum_{i=1,i\ne s}^{t}\sigma_i^2+\max\sigma_i^4)+\sum_{i=1,i\ne s}^{t}\sigma_i^2\sum_{i=1,i\ne s}^{t}\sigma_i^2+\sum_{i=1,i\ne s}^{t}\sigma_i^4\\
&=2\sum_{i=1}^{t}\sigma_i^2\max\sigma_i^2+\sum_{i=1,i\ne s}^{t}\sigma_i^2\sum_{i=1,i\ne s}^{t}\sigma_i^2+\sum_{i=1,i\ne s}^{t}\sigma_i^4\\
&>2\sum_{i=1}^{t}\sigma_i^2\max\sigma_i^2=2\|A\| _F^2\|A\|_2^2.
\end{aligned}
$$
\end{proof}
Clearly, the proved inequality is true. This shows that the convergence factor of the TRK method is uniformly smaller than that of RK \cite{Strohmer} concerning the iteration index, which converges faster than the latter.

However, Algorithm \ref{algo-3-2} may still be time-consuming for big data problems, since we have to select a row index pair $(i_k, j_k)$ at each iteration. To address this issue, we perform Algorithm \ref{algo-3-2} with simple random sampling to save the storage and computational operations. As the most straightforward way of all probability sampling methods, simple random sampling \cite{Olken} only requires a single random selection without the whole advanced knowledge of the dataset. In this selection method, all the individuals have an equal probability of being selected, which ensures an unbiased,
representative, and equal probability of the dataset. Hence, we take a small portion of rows as a sample and then select a row index pair from the obtained sample. The main advantage is that we only need to compute some cross-product-like constants corresponding to the selected simple sampling set, rather than the whole dataset. This idea stems from Chebyshev's (weak) law of large numbers.
\begin{theorem}\cite{Carlton}\label{The-4}
\rm Denote by $\{z_1,\cdots,z_n,\cdots\}$, a series of random variables, by $\mathbb{E}(z_k)$, the expectation, and by $\mathbb{D}(z_k)$, the variance of $z_k$. Then, when $\frac{1}{n^2}\sum_{k=1}^{n}\mathbb{D}(z_k)$\\$\rightarrow 0$, the following relation 
\begin{equation}\label{3-9}
\lim_{n \to \infty} P\left \{ \left|\frac{1}{n}\sum _{k=1}^{n}z_k-\frac{1}{n}\sum _{k=1}^{n}\mathbb{E} (z_k) \right|<\varepsilon \right \} =1    
\end{equation}
holds for any small positive number $\varepsilon$.
\end{theorem}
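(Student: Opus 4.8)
The plan is to deduce the statement directly from Chebyshev's inequality applied to the sample mean. First I would introduce the normalized partial sum $Y_n = \frac{1}{n}\sum_{k=1}^n z_k$ and record, by linearity of expectation, that $\mathbb{E}(Y_n) = \frac{1}{n}\sum_{k=1}^n \mathbb{E}(z_k)$, so that the event appearing in \eqref{3-9} is precisely $\{|Y_n - \mathbb{E}(Y_n)| < \varepsilon\}$. Proving that its probability tends to $1$ is equivalent to showing that the complementary probability $P\{|Y_n - \mathbb{E}(Y_n)| \ge \varepsilon\}$ tends to $0$, which is the form I would target.

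Next I would apply Chebyshev's inequality to $Y_n$, giving
$$P\{|Y_n - \mathbb{E}(Y_n)| \ge \varepsilon\} \le \frac{\mathbb{D}(Y_n)}{\varepsilon^2}.$$
The crucial step is evaluating $\mathbb{D}(Y_n)$. Under the standing assumption that the $z_k$ are pairwise uncorrelated (which holds in the sampling application of Section \ref{section-3}, where each drawn row is selected independently), the variance of a sum equals the sum of the variances, so that $\mathbb{D}(Y_n) = \frac{1}{n^2}\sum_{k=1}^n \mathbb{D}(z_k)$.

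Combining the two estimates yields $P\{|Y_n - \mathbb{E}(Y_n)| \ge \varepsilon\} \le \frac{1}{\varepsilon^2}\cdot\frac{1}{n^2}\sum_{k=1}^n \mathbb{D}(z_k)$. By the hypothesis $\frac{1}{n^2}\sum_{k=1}^n \mathbb{D}(z_k) \to 0$, the right-hand side tends to $0$ as $n\to\infty$ for each fixed $\varepsilon > 0$, whence the complementary probability vanishes and \eqref{3-9} follows by taking limits.

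The only real obstacle is conceptual rather than computational: one must make explicit the uncorrelatedness (or independence) assumption that justifies $\mathbb{D}\bigl(\sum_k z_k\bigr) = \sum_k \mathbb{D}(z_k)$, since without it the cross-covariance terms need not cancel and the stated variance condition alone would be insufficient to force the bound to zero. Because the result is classical and quoted from \cite{Carlton}, everything else reduces to a one-line application of Chebyshev's inequality.
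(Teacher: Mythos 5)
Your argument is the standard and correct proof of Chebyshev's weak law of large numbers: set $Y_n=\frac{1}{n}\sum_{k=1}^n z_k$, apply Chebyshev's inequality to bound $P\{|Y_n-\mathbb{E}(Y_n)|\ge\varepsilon\}$ by $\mathbb{D}(Y_n)/\varepsilon^2$, and let the hypothesis drive the bound to zero. Note that the paper itself supplies no proof of this statement; it is quoted verbatim from \cite{Carlton} and used as a black box to justify the simple-random-sampling variants, so there is no in-paper argument to compare against. Your one substantive remark is well taken: as stated, the theorem omits the pairwise uncorrelatedness (or independence) hypothesis under which $\mathbb{D}\bigl(\sum_k z_k\bigr)=\sum_k\mathbb{D}(z_k)$, and without that hypothesis the stated condition $\frac{1}{n^2}\sum_k\mathbb{D}(z_k)\to 0$ does not control $\mathbb{D}(Y_n)$; the classical formulation either assumes uncorrelated $z_k$ or replaces the condition by $\frac{1}{n^2}\mathbb{D}\bigl(\sum_{k=1}^n z_k\bigr)\to 0$. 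With that hypothesis made explicit, your proof is complete.
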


As seen from Theorem \ref{The-4}, we can obtain that if the sample size is large enough, then the sample mean will be close to the population mean. Therefore, using a small part to estimate the whole is reasonable. Putting this fact together with Algorithm \ref{algo-3-2} gives the following algorithm.
\begin{algorithm}
	\caption{Two-dimensional randomized Kaczmarz method with simple random
sampling}\label{algo-3-3}
	\begin{algorithmic}[1] 
        \REQUIRE $A,  \mathbf{b},  \mathbf{x}_{0}$\\
        \ENSURE $\mathbf{x}_{k+1}$\\
        \STATE
        For $k=0, 1, 2,\cdots $, until convergence, do:\\
        \STATE
        Generate an indicator set $\Gamma_k$, i.e., choosing $l m$ rows of $A$ by using the simple random sampling, where $0<l<1$.
        \STATE
        Select ${i_k,j_k} \in \Gamma_k$ with probability $\mathbb{P}$(rows =$i_k,j_k$)=$ \frac{|\mathbf{a}_{i_k}\times \mathbf{a}_{j_k}|^2}{\sum_{i_k,j_k\in \Gamma_k}|\mathbf{a}_{i_k}\times \mathbf{a}_{j_k}|^2}  $
        \\
 \STATE
Update $\mathbf{x}_{k+1} =\mathbf{x}_{k} + \gamma_k\mathbf{a}_{i_k}^{*} + \lambda_k\mathbf{a}_{j_k} ^{*}$ as the line $3$ of Algorithm \ref{algo-3-2}.
	\end{algorithmic}
\end{algorithm}
\begin{rem}\label{Rem-2}
\rm In Algorithm \ref{algo-3-3}, simple random sampling requires a parameter $l$, and its specific choice is problem-dependent. In practice, we usually set the parameter $l$ as $0.005$ or $0.01$. Our algorithm only generates an indicator set $\Gamma_k$, which is equivalent to drawing samples from the population with equal probability through simple random sampling. Indeed, the cost of simple random sampling can be ignored and the new method can both reduce the workload and save the storage requirements significantly.
\end{rem}

In what follows, we consider the convergence of Algorithm \ref{algo-3-3}. For the sake of generality, we assume that the sample size of $\Gamma_k$ is $lm$, unless otherwise stated.

\begin{theorem}\label{The-5}
{\rm Let $\mathbf{x}_{\star}=A^{\dagger}\mathbf{b}$ be the solution of Eq.\eqref{1-1}. Then the iterative sequence $\{\mathbf{x}_{k}\}$ generated by Algorithm \ref{algo-3-3} converges to $\mathbf{x}_{\star}$ for any initial vector $\mathbf{x}_0$ in expectation. Moreover, the corresponding error norm in expectation yields
}
\end{theorem}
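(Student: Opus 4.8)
The plan is to mirror the expectation analysis of Theorem \ref{The-2}, but carried out conditionally on the random sample $\Gamma_k$ rather than on the full row set $[m]$. First I would observe that, once $\Gamma_k$ is fixed, Step 3 of Algorithm \ref{algo-3-3} draws the pair $(i_k,j_k)$ from $\Gamma_k$ with probability proportional to $|\mathbf{a}_{i_k}\times\mathbf{a}_{j_k}|^2$, which is exactly the TRK selection rule of Algorithm \ref{algo-3-2} applied to the submatrix $A_{\Gamma_k}$ formed by the rows of $A$ indexed by $\Gamma_k$. Since the update formula is unchanged, the one-step identity \eqref{3-4}, namely $\|\overline{\mathbf{x}}_{k+1}\|_2^2=\|\overline{\mathbf{x}}_k\|_2^2-\|\mathbf{x}_{k+1}-\mathbf{x}_k\|_2^2$ together with $\|\mathbf{x}_{k+1}-\mathbf{x}_k\|_2^2=\varphi_k/|\mathbf{a}_{i_k}\times\mathbf{a}_{j_k}|^2$ (recall \eqref{2-3}), transfers verbatim, so no new projection algebra is needed.

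Next I would take the conditional expectation over the choice of $(i_k,j_k)$ inside $\Gamma_k$ and repeat the telescoping rearrangement \eqref{3-6}--\eqref{3-8}, with every summation now restricted to indices in $\Gamma_k$. The cross-product normalizer $\sum_{i,j\in\Gamma_k}|\mathbf{a}_i\times\mathbf{a}_j|^2$ cancels against the denominator of $\|\mathbf{x}_{k+1}-\mathbf{x}_k\|_2^2$, and the paired residual terms collapse into $2\|A_{\Gamma_k}\|_F^2\|A_{\Gamma_k}\overline{\mathbf{x}}_k\|_2^2-2\|A_{\Gamma_k}^*A_{\Gamma_k}\overline{\mathbf{x}}_k\|_2^2$ over $\|A_{\Gamma_k}\|_F^4-\|A_{\Gamma_k}A_{\Gamma_k}^*\|_F^2$. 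Bounding $\|A_{\Gamma_k}^*A_{\Gamma_k}\overline{\mathbf{x}}_k\|_2\le\|A_{\Gamma_k}\|_2\|A_{\Gamma_k}\overline{\mathbf{x}}_k\|_2$ then yields a conditional contraction of the same shape as \eqref{3-5}, but with all norms replaced by their $A_{\Gamma_k}$ counterparts.

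The main obstacle is the final spectral step. To convert $\|A_{\Gamma_k}\overline{\mathbf{x}}_k\|_2^2$ into a multiple of $\|\overline{\mathbf{x}}_k\|_2^2$ one needs $\|A_{\Gamma_k}\overline{\mathbf{x}}_k\|_2^2\ge\lambda_{\min}(A_{\Gamma_k}^*A_{\Gamma_k})\|\overline{\mathbf{x}}_k\|_2^2$, which holds only when $\overline{\mathbf{x}}_k$ lies in $\mathrm{range}(A_{\Gamma_k}^*)$; unlike the full-matrix case this is not automatic, since a strict subsample may drop part of the row space of $A$. I would resolve this by noting that the iterates never leave $\mathbf{x}_0+\mathrm{range}(A^*)$ (each correction $\gamma_k\mathbf{a}_{i_k}^*+\lambda_k\mathbf{a}_{j_k}^*$ is a column of $A^*$ and $\mathbf{x}_\star=A^\dagger\mathbf{b}\in\mathrm{range}(A^*)$), so $\overline{\mathbf{x}}_k\in\mathrm{range}(A^*)$ throughout, and then invoking Chebyshev's weak law, Theorem \ref{The-4}: for a sample of size $lm$ large enough, the sampled Gram quantities concentrate about their scaled population values, so with overwhelming probability $A_{\Gamma_k}$ retains the full row space of $A$ and $\lambda_{\min}(A_{\Gamma_k}^*A_{\Gamma_k})>0$, making the conditional factor strictly less than one.

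Finally I would remove the conditioning. Applying the law of iterated expectations $\mathbb{E}[\mathbb{E}_k[\cdot]]=\mathbb{E}[\cdot]$ and replacing the sample-dependent factor by its worst value (or its expectation) over the draws of $\Gamma_k$ produces a single geometric rate $\rho<1$, and a routine induction on $k$ gives $\mathbb{E}\|\overline{\mathbf{x}}_{k+1}\|_2^2\le\rho^{\,k+1}\|\overline{\mathbf{x}}_0\|_2^2$, establishing convergence to $\mathbf{x}_\star$ in expectation.
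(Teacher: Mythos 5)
Your overall architecture (one-step projection identity, conditional expectation over the pair drawn from $\Gamma_k$, telescoping the sums, then a spectral lower bound, then iterated expectations and induction) matches the paper's, but the route you take through the spectral step contains a genuine gap. You reduce everything to quantities of the submatrix $A_{\Gamma_k}$ and then need $\|A_{\Gamma_k}\overline{\mathbf{x}}_k\|_2^2\ge\lambda_{\min}(A_{\Gamma_k}^*A_{\Gamma_k})\|\overline{\mathbf{x}}_k\|_2^2$. You correctly flag that this requires $\overline{\mathbf{x}}_k\in\mathrm{range}(A_{\Gamma_k}^*)$, but your proposed fix --- invoking Theorem \ref{The-4} to claim that ``with overwhelming probability $A_{\Gamma_k}$ retains the full row space of $A$'' --- does not work. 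Chebyshev's weak law concerns concentration of sample averages of scalar random variables; it says nothing about rank retention. Worse, in the regime the algorithm is actually run (Remark \ref{Rem-2} takes $l=0.005$ or $0.01$, e.g.\ $lm=20$ sampled rows against $\mathrm{rank}(A)=600$ in the experiments), $A_{\Gamma_k}$ has rank at most $lm<\mathrm{rank}(A)$, so it \emph{cannot} retain the full row space, $\overline{\mathbf{x}}_k$ will generically have a component in $\mathrm{null}(A_{\Gamma_k})$, and your conditional contraction factor can be arbitrarily close to (or equal to) $1$ for unlucky draws of $\Gamma_k$. Consequently the final step of replacing the sample-dependent factor by its ``worst value'' does not yield $\rho<1$, and replacing it by its expectation is not licensed because the quantity is a ratio, not a sum.

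The paper avoids this trap by applying the law-of-large-numbers heuristic one level earlier: it asserts (Eqs.~\eqref{3-11}--\eqref{3-12}) that the \emph{sampled sums} $\sum_{i,j\in\Gamma_k}\varphi_k$ and $\sum_{i,j\in\Gamma_k}|\mathbf{a}_i\times\mathbf{a}_j|^2$, suitably normalized, equal their full-population counterparts up to multiplicative factors $(1\pm\varepsilon_k)$ and $(1\pm\tilde\varepsilon_k)$. The ratio in \eqref{3-13} then collapses to the \emph{full-matrix} expression $\frac{2\|A\|_F^2\|\mathbf{b}-A\mathbf{x}_k\|_2^2-2\|A^*(\mathbf{b}-A\mathbf{x}_k)\|_2^2}{\|A\|_F^4-\|AA^*\|_F^2}$ times $\frac{1-\varepsilon_k}{1+\tilde\varepsilon_k}$, after which the spectral bound is taken with respect to $A$ itself, where $\overline{\mathbf{x}}_k\in\mathrm{range}(A^*)$ is legitimately available. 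If you want to salvage your version, you would need to transfer from $A_{\Gamma_k}$ back to $A$ before invoking any minimal eigenvalue --- which is exactly what the paper's Eqs.~\eqref{3-11}--\eqref{3-12} accomplish.
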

\begin{equation}\label{3-10}
\mathbb{E}  \|\overline{\mathbf{x}}_{k+1}\| _{2}^2
\leq(1-\frac{1-\varepsilon_k}{1+\tilde{\varepsilon}_k}\frac{2\|A\| _F^4-2\|A\|_F^2\|A\|_2^2}{\| A\| _F^4-\|AA^*\|^2_F}\frac{\lambda_{min}(A^*A)}{\| A\| _F^2})^{k+1} \|\overline{\mathbf{x}}_0\| _{2}^2
\end{equation}
for $k=0,1,2,\cdots.$
\begin{proof}
From Chebyshev’s (weak) law of large numbers, if 
$lm$ is sufficiently large and there are two scalars $0<\varepsilon_k,\tilde{\varepsilon}_k\ll1$, then it follows that
\begin{equation}\label{3-11}
\begin{aligned}
&\frac{\sum\limits_{i,j\in \Gamma_k}(\|\mathbf{a}_j\|_2^2|\mathbf{a}_{i}\overline{\mathbf{x}}_k|^2+\|\mathbf{a}_i\|_2^2|\mathbf{a}_{j}\overline{\mathbf{x}}_k|^2-(\mathbf{a}_{i}\overline{\mathbf{x}}_k)^* \mathbf{a}_i\mathbf{a}_j^* \mathbf{a}_{j}\overline{\mathbf{x}}_k-(\mathbf{a}_{j}\overline{\mathbf{x}}_k)^* \mathbf{a}_j\mathbf{a}_{i}^* \mathbf{a}_{i}\overline{\mathbf{x}}_k)}{(lm)^2-lm-1}\\
&=\frac{\sum\limits_{i,j=1}^{m}(\|\mathbf{a}_j\|_2^2|\mathbf{a}_{i}\overline{\mathbf{x}}_k|^2+\|\mathbf{a}_i\|_2^2|\mathbf{a}_{j}\overline{\mathbf{x}}_k|^2-(\mathbf{a}_{i}\overline{\mathbf{x}}_k)^* \mathbf{a}_i\mathbf{a}_j^* \mathbf{a}_{j}\overline{\mathbf{x}}_k-(\mathbf{a}_{j}\overline{\mathbf{x}}_k)^* \mathbf{a}_j\mathbf{a}_{i}^* \mathbf{a}_{i}\overline{\mathbf{x}}_k)}{m^2-m-1}(1\pm\varepsilon_k )\\
&=\frac{2\|A\| _F^2\|\mathbf{b}-Ax\|_2^2-2\|A^*(\mathbf{b}-A\mathbf{x}_{k})\|_2^2}{m^2-m-1}(1\pm\varepsilon_k ),
\end{aligned}   
\end{equation}
and
\begin{equation}\label{3-12}
\begin{aligned}
\frac{\sum\limits_{i,j\in \Gamma_k}(|\mathbf{a}_{i}\times \mathbf{a}_{j}|^2)}{(lm)^2-lm-1}&=\frac{\sum\limits_{i,j=1}^{m}(|\mathbf{a}_{i}\times \mathbf{a}_{j}|^2)}{m^2-m-1}(1\pm\tilde{\varepsilon}_k)\\
&=\frac{\| A\| _F^4-\|AA^*\| _F^2}{m^2-m-1}(1\pm\tilde{\varepsilon}_k).  
\end{aligned}
\end{equation}
Similar to Eq.\eqref{3-8}, we have
\begin{equation}\label{3-13}
\begin{aligned}
&\mathbb{E}_{\rm{k}} \| \overline{\mathbf{x}}_{k+1}\|_{2}^2
= \| \overline{\mathbf{x}}_k\| _{2}^2 - {\mathbb{E}_{\rm{k}}}\parallel \mathbf{x}_{k+1} - \mathbf{x}_{k}{\parallel _{2}^2}\\
&\leq \|\overline{\mathbf{x}}_k\|_2^2-\sum\limits_{i_k,j_k\in \Gamma_k}{\frac{|\mathbf{a}_{i_k}\times \mathbf{a}_{j_k}|^2}{\sum\limits_{i_k,j_k\in \Gamma_k}(|\mathbf{a}_{i_k}\times \mathbf{a}_{j_k}|^2)}} \frac{\varphi_k}{\|\mathbf{a}_{i_k}\|_2^2\|\mathbf{a}_{j_k}\|_2^2-|\mathbf{a}_{i_k}\mathbf{a}_{j_k}^*|^2}\\
&=\|\overline{\mathbf{x}}_k\|_2^2-\frac{\frac{1}{
(lm)^2-lm-1}\sum\limits_{i_k,j_k\in \Gamma_k}\varphi_k}{\frac{1}{
(lm)^2-lm-1}\sum\limits_{i_k,j_k\in \Gamma_k}(|\mathbf{a}_{i_k}\times \mathbf{a}_{j_k}|^2)}\\
&=\parallel {\overline{\mathbf{x}}_k}{\parallel _{2}^2}-\frac{1\pm\varepsilon_k}{1\pm\tilde{\varepsilon}_k }\frac{2\|A\| _F^2\|\mathbf{b}-A\mathbf{x}_{k}\|_2^2-2
\langle A^*
A\overline{\mathbf{x}}_k, A^* A\overline{\mathbf{x}}_k\rangle}{\| A\| _F^4-\|A A^*\|_F^2}\\
&\leq\parallel {\overline{\mathbf{x}}_k}{\parallel _{2}^2}-\frac{1-\varepsilon_k}{1+\tilde{\varepsilon}_k}\frac{2\|A\| _F^2\|\mathbf{b}-A\mathbf{x}_{k}\|_2^2-2\|A^*(\mathbf{b}-A\mathbf{x}_{k})\|_2^2}{\| A\| _F^4-\|A A^*\|_F^2}\\
&\leq \parallel {\overline{\mathbf{x}}_k}{\parallel _{2}^2}-\frac{1-\varepsilon_k}{1+\tilde{\varepsilon}_k}\frac{2\|A\| _F^2-2\|A\|_2^2}{\|A\| _F^4-\|A A^* \|_F^2}\|\mathbf{b}-A\mathbf{x}_{k}\|_2^2\\
&\leq(1-\frac{1-\varepsilon_k}{1+\tilde{\varepsilon}_k}\frac{2\|A\| _F^4-2\|A\| _F^2\|A\|_2^2}{\| A\| _F^4-\|AA^*\| _F^2}\frac{\lambda_{min}(A^*A)}{\| A\| _F^2})\parallel \overline{\mathbf{x}}_k{\parallel _{2}^2}.
\end{aligned}
\end{equation}
Again, taking the full expectation for both sides of Eq.\eqref{3-13}, it follows that
$$
\mathbb{E}\|\overline{\mathbf{x}}_{k+1}\| _{2}^2
\leq(1-\frac{1-\varepsilon_k}{1+\tilde{\varepsilon}_k}\frac{2\|A\| _F^4-2\|A\|_F^2\|A\|_2^2}{\| A\| _F^4-\|AA^*\|^2_F}\frac{\lambda_{min}(A^*A)}{\| A\| _F^2})\mathbb{E}\|\overline{\mathbf{x}}_k\| _{2}^2,
$$
holds for $k=1,2,\cdots$. Consequently,  Eq.\eqref{3-9} follows immediately by induction.
\end{proof}

\begin{rem}\label{Rem-3}
\rm The convergence factor of Algorithm \ref{algo-3-3} may be slightly larger than that of Algorithm \ref{algo-3-2}, however, when both the positive parameters $\varepsilon_k$ and $ \tilde{\varepsilon}_k$ are sufficiently less than $1$, we can still assume that $$1-\frac{1-\varepsilon_k}{1+\tilde{\varepsilon}_k}\frac{2\|A\| _F^4-2\|A\|_F^2\|A\|_2^2}{\| A\| _F^4-\|AA^*\|^2_F}\frac{\lambda_{min}(A^*A)}{\| A\| _F^2}< 1-\frac{\lambda_{min}(A^*A)}{\| A\| _F^2}.$$
This shows that Algorithm \ref{algo-3-3} is still better than that of RK. Moreover, Algorithm \ref{algo-3-3} may require more iterations than Algorithm \ref{algo-3-2}. The reason is that the indicator set $\Gamma_k$ of the former only contains the limited rows of $A$, which needs more iterations to achieve the same accuracy, whereas it consumes less time than the latter.
\end{rem}

\subsection{Two-dimensional greedy randomized Kaczmarz method}\label{subsection-3-2}
As well known that the crucial key of randomized Kaczmarz-type methods is to select the active row effectively. Observe from \cite{Bai} that the GRK solver, with the greedy selection strategy, often outperforms the RK solver in terms of the elapsed computing time. In fact, the core of the above strategy is to determine the index row corresponding to the larger entries of the residual vector $\mathbf{r}_k$. More precisely, if $|\mathbf{r}_k^{(i)}|>|\mathbf{r}_k^{(j)}|$,$i,j\in \left \{  1,2,....m \right \}$, then the $i$-th row has a larger probability of being selected than the $j$-th row, which leads to the larger elements of the residual vector to be eliminated first as much as possible. Thus we hope that two larger entries of the residual vector have a larger probability to be selected at each iteration, such that can eliminate them as much as possible. Putting this result with Algorithm \ref{algo-3-2} gives the following algorithm.

\begin{algorithm}[htbp]
	\caption{Two-dimensional greedy randomized Kaczmarz method}\label{algo-3-4}
	\begin{algorithmic}[1] 
        \REQUIRE $A,  \mathbf{b},  \mathbf{x}_{0}$\\
        \ENSURE $\mathbf{x}_{k+1}$\\
        \STATE
       For $k=0, 1, 2,\cdots $, until convergence, do:\\
        \STATE
        Compute\\
        \begin{equation}\label{3-14}
         \epsilon  _{k} = \frac{1}{2}\left(\frac{1}{{\parallel \mathbf{b} - A\mathbf{x}_{k}{\parallel _{1}}}-\varrho_k}\mathop {\max }\limits_{\substack{i_k \in [m]\\i_k\neq i_{max}}} \left\{ \frac{|\mathbf{b}_{i_k}-\mathbf{a}_{i_k}\mathbf{x}_k|}{\|\mathbf{a}_{i_k}\|_2}\right\} + \frac{1}{{\parallel A\parallel _{2,1}}-\rho_k}\right),
        \end{equation}
        where $\varrho_k=|\mathbf{b}_{i_{max}}-\mathbf{a}_{i_{max}}\mathbf{x}_k|$,        $\rho_k=\|\mathbf{a}_{i_{max}}\|_2$ and $\frac{\varrho_k}{\rho_k}=\max\limits_{1\leq i \leq m}\frac{|\mathbf{b}_i-\mathbf{a}_{i}\mathbf{x}_k|}{\|\mathbf{a}_i\|_2}.$
        \\
 \STATE
Determine the index set of positive integers\\
\begin{equation}\label{3-15}
\mathcal{U}_k = \left\{ {{i_k}\Big| {|\mathbf{b}_{i_k}-\mathbf{a}_{i_k}\mathbf{x}_k| \ge {\epsilon_k}} (\parallel \mathbf{b} - 
A{\mathbf{x}_{k}}{\parallel _{1}}-\varrho_k})\|\mathbf{a}_{i_k}\|_2 \right\}.\end{equation}\\
 \STATE
   Compute the $i$-th element of the vector $\tilde{\mathbf{r}} _{k}$ by  \\
$$\tilde{\mathbf{r} } _{k} ^{(i)}=\left\{ \begin{array}{l}
{\mathbf{b}_i} - {\mathbf{a}_i}{\mathbf{x}_{k}},\qquad  \mathrm{if} \quad i \in \mathcal{U}_{k}
\\
0.\qquad \qquad \quad \enspace \mathrm{otherwise}
\end{array}
\right.$$
    \STATE   
Select ${i_k} \in {\mathcal{U}_k}$ with probability $\mathbb{P}$(row =${i_k}$)=$ \mid {{{\tilde {\mathbf{r} }}_k}^{({i_k})}}\mid  /\left \| {{{\tilde {\mathbf{r} }}}_k} \right \|_{1} $.\\
    \STATE   
Select ${j_k} \in {\mathcal{U}_k}$ with probability $\mathbb{P}$(row =${j_k}$)=$ \mid {{{\tilde {\mathbf{r} }}_k}^{({j_k})}}\mid  /\left \| {{{\tilde {\mathbf{r} }}}_k} \right \|_{1} $.\\
            \STATE Update $\mathbf{x}_{k+1}$ as the line $3$ of Algorithm \ref{algo-3-1}.
	\end{algorithmic}
\end{algorithm}

From Algorithm \ref{algo-3-4}, we can see that the index set $\mathcal{U}_k$ is nonempty for all iteration index $k$, since
$$
\max\limits_{\substack{1\leq i_k \leq m\\i_k \neq i_{max}}}\frac{|\mathbf{b}_{i_k}-\mathbf{a}_{i_k}\mathbf{x}_k|}{\|\mathbf{a}_{i_k}\|_2}\geq \sum_{i_k=1,i_k \neq i_{max}}^{m}\frac{\|\mathbf{a}_{i_k}\|_2}{\|A\|_{2,1}-\rho_k}\frac{|\mathbf{b}_{i_k}-\mathbf{a}_{i_k}\mathbf{x}_k|}{\|\mathbf{a}_{i_k}\|_2}=\frac{\|\mathbf{b}-A\mathbf{x}_{k}\|_1-\varrho_k}{\|A\|_{2,1}-\rho_k}
$$
and
$$
\frac{|\mathbf{b}_i-\mathbf{a}_{i}\mathbf{x}_k|}{\|\mathbf{a}_i\|_2}=\max\limits_{\substack{1\leq i_k \leq m\\i_k \neq i_{max}}}\frac{|\mathbf{b}_{i_k}-\mathbf{a}_{i_k}\mathbf{x}_k|}{\|\mathbf{a}_{i_k}\|_2}\geq \frac{1}{2}\max\limits_{\substack{1\leq i_k \leq m\\i_k \neq i_{max}}}\frac{|\mathbf{b}_{i_k}-\mathbf{a}_{i_k}\mathbf{x}_k|}{\|\mathbf{a}_{i_k}\|_2}+\frac{1}{2}\frac{\|\mathbf{b}-A\mathbf{x}_{k}\|_1-\varrho_k}{\|A\|_{2,1}-\rho_k}
$$
hold, i.e., the maximum is always greater than the average. Next, we discuss the convergence of Algorithm \ref{algo-3-4}. The following theorem demonstrates the convergence of the expectation analysis of the proposed algorithm.
\begin{theorem}\label{The-6}
{\rm Let $\mathbf{x}_{\star}=A^{\dagger}\mathbf{b}$ be the solution of Eq.\eqref{1-1}. Then the iterative sequence $\{\mathbf{x}_{k}\}$ generated by Algorithm \ref{algo-3-4} converges to $\mathbf{x}_{\star}$ for any initial vector $\mathbf{x}_0$ in expectation. Moreover, the corresponding error norm in expectation satisfies
}
\end{theorem}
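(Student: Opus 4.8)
The plan is to follow the template of Theorems \ref{The-2} and \ref{The-5}: begin from the exact per-step error identity, pass to the conditional expectation $\mathbb{E}_k$ over the greedy random draw, and then turn the resulting residual quantity into a contraction factor times $\|\overline{\mathbf{x}}_k\|_2^2$. The foundation is Eq.\eqref{3-3}--\eqref{3-4}: since $\mathbf{x}_{k+1}-\mathbf{x}_k=-P\overline{\mathbf{x}}_k$ with $P$ the orthogonal projector onto $\mathrm{span}\{\mathbf{a}_{i_k}^*,\mathbf{a}_{j_k}^*\}$, one has $\|\overline{\mathbf{x}}_{k+1}\|_2^2=\|\overline{\mathbf{x}}_k\|_2^2-\|P\overline{\mathbf{x}}_k\|_2^2$. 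Because the projection onto this two-dimensional span is at least as long as the projection onto its one-dimensional subspace $\mathrm{span}\{\mathbf{a}_{i_k}^*\}$, I get $\|P\overline{\mathbf{x}}_k\|_2^2\ge |\mathbf{a}_{i_k}\overline{\mathbf{x}}_k|^2/\|\mathbf{a}_{i_k}\|_2^2$, and by consistency $\mathbf{a}_{i_k}\overline{\mathbf{x}}_k=-(\mathbf{b}_{i_k}-\mathbf{a}_{i_k}\mathbf{x}_k)=-\tilde{\mathbf{r}}_k^{(i_k)}$, so the per-step decrease is at least $|\tilde{\mathbf{r}}_k^{(i_k)}|^2/\|\mathbf{a}_{i_k}\|_2^2$. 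This one-row bound sidesteps any independence assumption on the pair $(i_k,j_k)$; equivalently, treating $i_k,j_k$ as i.i.d.\ draws lets the cross-product bound Eq.\eqref{3-27} factorize as $\big(\mathbb{E}_k[|\mathbf{a}_{i_k}\overline{\mathbf{x}}_k|/\|\mathbf{a}_{i_k}\|_2]\big)^2$, and both routes give the same lower bound.

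Next I would take $\mathbb{E}_k$ over the choice of $i_k$, whose marginal law is $\mathbb{P}(i_k=i)=|\tilde{\mathbf{r}}_k^{(i)}|/\|\tilde{\mathbf{r}}_k\|_1$ on $\mathcal{U}_k$, yielding $\mathbb{E}_k\|\overline{\mathbf{x}}_{k+1}\|_2^2\le\|\overline{\mathbf{x}}_k\|_2^2-\sum_{i\in\mathcal{U}_k}\frac{|\tilde{\mathbf{r}}_k^{(i)}|}{\|\tilde{\mathbf{r}}_k\|_1}\frac{|\tilde{\mathbf{r}}_k^{(i)}|^2}{\|\mathbf{a}_i\|_2^2}$. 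The engine is the defining threshold of $\mathcal{U}_k$ in Eq.\eqref{3-15}: for every $i\in\mathcal{U}_k$ one has $|\tilde{\mathbf{r}}_k^{(i)}|/\|\mathbf{a}_i\|_2\ge\epsilon_k(\|\mathbf{b}-A\mathbf{x}_k\|_1-\varrho_k)$, so pulling this factor out and using $\sum_{i\in\mathcal{U}_k}|\tilde{\mathbf{r}}_k^{(i)}|=\|\tilde{\mathbf{r}}_k\|_1$ collapses the weighted sum to $1$ and leaves the decrease bounded below by $\epsilon_k^2(\|\mathbf{b}-A\mathbf{x}_k\|_1-\varrho_k)^2$. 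I would then invoke the ``maximum $\ge$ weighted average'' inequality already displayed just before the theorem, which forces $\epsilon_k\ge 1/(\|A\|_{2,1}-\rho_k)$, so that the decrease is at least $(\|\mathbf{b}-A\mathbf{x}_k\|_1-\varrho_k)^2/(\|A\|_{2,1}-\rho_k)^2$.

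The remaining, and hardest, step is to convert the mixed residual quantity $\|\mathbf{b}-A\mathbf{x}_k\|_1-\varrho_k=\sum_{i\neq i_{max}}|\tilde{\mathbf{r}}_k^{(i)}|$ into $\|\overline{\mathbf{x}}_k\|_2^2$. Writing $\mathbf{b}-A\mathbf{x}_k=-A\overline{\mathbf{x}}_k$, I would apply the norm inequality $\|\cdot\|_1\ge\|\cdot\|_2$ on the index set $\{i\neq i_{max}\}$ to obtain $(\|\mathbf{b}-A\mathbf{x}_k\|_1-\varrho_k)^2\ge\|\mathbf{b}-A\mathbf{x}_k\|_2^2-\varrho_k^2$, and then use $\|A\overline{\mathbf{x}}_k\|_2^2\ge\lambda_{min}(A^*A)\|\overline{\mathbf{x}}_k\|_2^2$, valid since $\overline{\mathbf{x}}_k$ remains in the range of $A^*$, exactly as in Theorems \ref{The-2} and \ref{The-5}. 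Here the difficulty is genuine: the $\ell_1$-versus-$\ell_2$ gap together with the subtracted maximal term $\varrho_k$ obstructs a clean factorization, and I expect the final convergence factor to retain the iteration-dependent quantities $\varrho_k$ and $\rho_k$, precisely in the way Theorem \ref{The-5} keeps $\varepsilon_k,\tilde{\varepsilon}_k$. To close the argument I would bound $\rho_k\ge\min_i\|\mathbf{a}_i\|_2$ so that the denominator $(\|A\|_{2,1}-\rho_k)^2$ is uniformly controlled, assemble a contraction factor that stays strictly below $1$, take full expectations through the law of iterated expectation, and finish by induction on $k$, as in the earlier proofs.
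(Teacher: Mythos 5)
Your skeleton matches the paper's proof almost step for step: start from the orthogonality identity \eqref{3-4}, lower-bound the per-step decrease, pass to $\mathbb{E}_k$ under the law $\mathbb{P}(\mathrm{row}=i)=|\tilde{\mathbf{r}}_k^{(i)}|/\|\tilde{\mathbf{r}}_k\|_1$, use the membership threshold \eqref{3-15} to collapse the weighted sum to $\epsilon_k^2(\|\mathbf{b}-A\mathbf{x}_k\|_1-\varrho_k)^2$, and finish with $\|A\overline{\mathbf{x}}_k\|_2^2\ge\lambda_{\min}(A^*A)\|\overline{\mathbf{x}}_k\|_2^2$ and $\varrho_k\le\rho_k\|\overline{\mathbf{x}}_k\|_2$. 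The one genuine deviation in the mechanics is harmless and arguably cleaner: the paper keeps both rows, writes the expectation against the product measure $\mathcal{P}_k$ over pairs $(i_k,j_k)$, and bounds the decrease by $\frac{|\mathbf{a}_{i_k}\overline{\mathbf{x}}_k||\mathbf{a}_{j_k}\overline{\mathbf{x}}_k|}{\|\mathbf{a}_{i_k}\|_2\|\mathbf{a}_{j_k}\|_2}$ via \eqref{3-27} before applying the threshold to each factor; your single-row projection bound reaches the same quantity $\epsilon_k^2(\|\mathbf{b}-A\mathbf{x}_k\|_1-\varrho_k)^2$ while sidestepping the degenerate case $i_k=j_k$ that the independent double draw permits. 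Your $\ell_1$-versus-$\ell_2$ step even yields the marginally sharper factor $\lambda_{\min}(A^*A)-\rho_k^2$ in place of the paper's $(\sqrt{\lambda_{\min}(A^*A)}-\rho_k)^2$.

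The gap is in the bound on $\epsilon_k$. You stop at $\epsilon_k\ge 1/(\|A\|_{2,1}-\rho_k)$, which is exactly the paper's estimate for $k=0$ and yields only the bound \eqref{3-16} at every step. The theorem, however, asserts the strictly sharper per-step factor \eqref{3-17} for $k\ge 1$, built from $\frac{1}{2}\bigl(\frac{1}{\Omega-\rho_k}+\frac{1}{\|A\|_{2,1}-\rho_k}\bigr)$ with $\Omega=\max_{i_0}\sum_{i\ne i_0}\|\mathbf{a}_i\|_2<\|A\|_{2,1}$, and the product form \eqref{3-18}. The missing ingredient is the observation that after the two-dimensional update the residual entries at the rows used in step $k-1$ vanish exactly (the paper verifies $r_k^{(i_{k-1})}=r_k^{(j_{k-1})}=0$ by direct substitution of $\gamma_{k-1},\lambda_{k-1}$), so for $k\ge1$ the average in the ``maximum $\ge$ weighted average'' argument runs over at most $m-2$ rows with total weight at most $\Omega-\rho_k$ rather than $\|A\|_{2,1}-\rho_k$; this is what upgrades $\epsilon_k(\|A\|_{2,1}-\rho_k)\ge1$ to $\epsilon_k(\|A\|_{2,1}-\rho_k)\ge\frac{1}{2}\bigl(\frac{\|A\|_{2,1}-\rho_k}{\Omega-\rho_k}+1\bigr)$. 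Without it you prove convergence, but not the stated rate. A second, smaller point: your closing remark about controlling $\rho_k$ via $\rho_k\ge\min_i\|\mathbf{a}_i\|_2$ goes the wrong way for the numerator; to make the factor uniform in $k$ one needs the upper bound $\rho_k\le\rho_m=\max_i\|\mathbf{a}_i\|_2$, as in \eqref{3-18}.
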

\begin{equation}\label{3-16}
\mathbb{E} \|\overline{\mathbf{x}}_1\|_{2}^2\leq(1-\frac{(\sqrt{\lambda_{min}(A^*A)}-\rho_k)^2}{(\|A\|_{2,1}-\rho_k)^2})\parallel \overline{\mathbf{x}}_0{\parallel _{2}^2}
\end{equation}
and
\begin{equation}\label{3-17}
\mathbb{E}_{\rm{k}} \|\overline{\mathbf{x}}_{k+1}\| _{2}^2 \leq\left\{1- \left[\frac{1}{2}\left(\frac{\sqrt{\lambda_{min}(A^*A)}-\rho_k}{\Omega-\rho_k}+\frac{\sqrt{\lambda_{min}(A^*A)}-\rho_k}{\|A\|_{2,1}-\rho_k}\right)\right]^2\right\}\|\overline{\mathbf{x}}_k\|_2^2
\end{equation}
holds for $\ k=1,2,\cdots,$ where ${\Omega}=\max \limits_{1 \leq i_0\le m}\sum\limits_{\substack{i=1\\i\ne i_{0}}}^{m}\|\mathbf{a}_i\|_2$. Moreover, we have
\begin{equation}\label{3-18}
\begin{aligned}
&\mathbb{E} \|\overline{\mathbf{x}}_{k+1}\| _{2}^2 
\leq\left\{1- \left[\frac{1}{2}\left(\frac{\sqrt{\lambda_{min}(A^*A)}-\rho_m}{\Omega-\rho_m}+\frac{\sqrt{\lambda_{min}(A^*A)}-\rho_m}{\|A\|_{2,1}-\rho_m}\right)\right]^2\right\}^k\\
&\left(1-\frac{(\sqrt{\lambda_{min}(A^*A)}-\rho_m)^2}{(\|A\|_{2,1}-\rho_m)^2}\right)\|\overline{\mathbf{x}}_0\|_2^2,
\end{aligned}
\end{equation}
in which $\rho_m=\max\limits_{i\in[m]}\|\mathbf{a}_i\|_2$.
\begin{proof}
Define
$$\mathcal{P}_k=\sum\limits_{{i_k},{j_k} \in {\mathcal{U}_k}}\frac{|\mathbf{b}_{i_k}-\mathbf{a}_{i_k}\mathbf{x}_k|}{\sum\limits_{i\in \mathcal{U}_k}|\mathbf{b}_{i}-\mathbf{a}_{i}\mathbf{x}_k|}\frac{|\mathbf{b}_{j_k}-\mathbf{a}_{j_k}\mathbf{x}_k|}{\sum\limits_{j\in \mathcal{U}_k}|\mathbf{b}_{j}-\mathbf{a}_{j}\mathbf{x}_k|}.$$ 
For $k=0$, it is easy to see that
\begin{equation}\label{3-19}
\epsilon_0(\|A\|_{2,1}-\rho_k)\ge 1 .
\end{equation}
From Eq.\eqref{3-13}, for $k=1,2,\cdots,$ it follows that
\begin{equation}\label{3-20}
\begin{aligned}
\epsilon_k(\|A\|_{2,1}-\rho_k)&=\frac{1}{2}\frac{\max \limits_{i_k \in [m],i_k\neq i_{max}} \left({{\frac{|\mathbf{b}_{i_k}-\mathbf{a}_{i_k}\mathbf{x}_k|}{\|\mathbf{a}_{i_k}\|_2}}} \right)}{\frac{1}{\|A\|_{2,1}-\rho_k}(\|\mathbf{b}-A\mathbf{x}_{k}\|_1-\varrho_k)}+\frac{1}{2}\\
&=\frac{1}{2}\frac{\max \limits_{i_k\in [m],i_k\neq i_{max}} \left({{\frac{|\mathbf{b}_{i_k}-\mathbf{a}_{i_k}\mathbf{x}_k|}{\|\mathbf{a}_{i_k}\|_2}}} \right)}{\sum\limits_{i_k=1,i_k \neq i_{max}}^{m}\frac{\|\mathbf{a}_{i_k}\|_2}{\|A\|_{2,1}-\rho_k}\frac{|\mathbf{b}_{i_k}-\mathbf{a}_{i_k}\mathbf{x}_k|}{\|\mathbf{a}_{i_k}\|_2}}+\frac{1}{2}\\
&=\frac{1}{2}\frac{\max \limits_{i_k\in [m],i_k\neq i_{max}} \left({{\frac{|\mathbf{b}_{i_k}-\mathbf{a}_{i_k}\mathbf{x}_k|}{\|\mathbf{a}_{i_k}\|_2}}} \right)}{\sum
\limits_{\substack{i_k=1\\i_k \neq i_{max}\\i_k\neq i_{k-1}}}^{m}\frac{\|\mathbf{a}_{i_k}\|_2}{\|A\|_{2,1}-\rho_k}\frac{|\mathbf{b}_{i_k}-\mathbf{a}_{i_k}\mathbf{x}_k|}{\|\mathbf{a}_{i_k}\|_2}}+\frac{1}{2}
\end{aligned}
\end{equation}
$$\begin{aligned}
&\geq\frac{1}{2}\frac{1}{\sum
\limits_{\substack{i_k=1\\i_k \neq i_{max}\\i_k\neq i_{k-1}}}^{m}\frac{\|\mathbf{a}_{i_k}\|_2}{\|A\|_{2,1}-\rho_k}}+\frac{1}{2}\\
&\geq \frac{1}{2}\left(\frac{\| A\|_{2,1}-\rho_k}{\sum
\limits_{\substack{i_k=1\\i_k \neq i_{max}\\i_k\neq i_{k-1}}}^{m}\|\mathbf{a}_{i_k}\|_2}+1\right)\geq \frac{1}{2}\left(\frac{\| A\|_{2,1}-\rho_k}{\Omega-\rho_k}+1\right).
\end{aligned}
$$
Note that the above relation $i_k\neq i_{k-1}$, since
$$
\begin{aligned}
r_{k}^{(i_{k-1})}&=\mathbf{b}_{i_{k-1}}-\mathbf{a}_{i_{k-1}}\mathbf{x}_{k}\\
&=\mathbf{b}_{i_{k-1}}-\mathbf{a}_{i_{k-1}}(\mathbf{x}_{k-1}+\gamma_{k-1}\mathbf{a}_{i_{k-1}}^{*} + \lambda_{k-1}\mathbf{a}_{j_{k-1}} ^{*})\\
&=\mathbf{b}_{i_{k-1}}-\mathbf{a}_{i_{k-1}}\mathbf{x}_{k-1}-\frac{(\|\mathbf{a}_{i_{k-1}}\|_2^2\|\mathbf{a}_{j_{k-1}}\|_2^2-|\mathbf{a}_{i_{k-1}}\mathbf{a}_{j_{k-1}}^*|^2)(\mathbf{b}_{i_{k-1}}-\mathbf{a}_{i_{k-1}}\mathbf{x}_{k-1})}{\|\mathbf{a}_{i_{k-1}}\|_2^2\|\mathbf{a}_{j_{k-1}}\|_2^2-|\mathbf{a}_{i_{k-1}}\mathbf{a}_{j_{k-1}}^*|^2}\\
&=0,
\end{aligned}
$$
similarly, the case $r_{k}^{(j_{k-1})}=0$ holds. Consequently, it follows that $\varphi _{k}=0$.

 From Eqs.\eqref{3-4}, \eqref{3-14} and \eqref{3-15}, we have
\begin{equation}\label{3-21}
\begin{aligned}
&\mathbb{E}_{\rm{k}}\|\overline{\mathbf{x}}_{k+1}\|  _{2}^2 
= \|\overline{\mathbf{x}}_k\| _{2}^2 - {\mathbb{E}_{\rm{k}}}\parallel \mathbf{x}_{k+1} - \mathbf{x}_{k}{\parallel _{2}^2}\\
&=\|\overline{\mathbf{x}}_k\|_2^2-\mathcal{P}_k\frac{
\varphi_k}{\|\mathbf{a}_{i_k}\|_2^2\|\mathbf{a}_{j_k}\|_2^2-|\mathbf{a}_{i_k}\mathbf{a}_{j_k}^*|^2}\\
&\leq\|\overline{\mathbf{x}}_k\|_2^2-\mathcal{P}_k\frac{2|\mathbf{a}_{i_k}\overline{\mathbf{x}}_k||\mathbf{a}_{j_k}\overline{\mathbf{x}}_k|(\|\mathbf{a}_{i_k}\|_2\|\mathbf{a}_{j_k}\|_2-|\mathbf{a}_{i_k}\mathbf{a}_{j_k}^*|)}{\|\mathbf{a}_{i_k}\|_2^2\|\mathbf{a}_{j_k}\|_2^2-|\mathbf{a}_{i_k}\mathbf{a}_{j_k}^*|^2}\\
&\leq\|\overline{\mathbf{x}}_k\|_2^2-\mathcal{P}_k\frac{|\mathbf{a}_{i_k}\overline{\mathbf{x}}_k||\mathbf{a}_{j_k}\overline{\mathbf{x}}_k|}{\|\mathbf{a}_{i_k}\|_2\|\mathbf{a}_{j_k}\|_2}\\
&\leq\|\overline{\mathbf{x}}_k\|_2^2-\sum\limits_{{i_k},{j_k} \in {\mathcal{U}_k}}\frac{{\epsilon_k^2}(\| \mathbf{b}- 
A{\mathbf{x}_{k}} \|_{1}-\varrho_k)^2\|\mathbf{a}_{i_k}\|_2\|\mathbf{a}_{j_k}\|_2}{(\sum\limits_{i\in \mathcal{U}_k}|\mathbf{b}_{i}-\mathbf{a}_{i}\mathbf{x}_k|)^2}\left(\frac{|\mathbf{a}_{i_k}\overline{\mathbf{x}}_k||\mathbf{a}_{j_k}\overline{\mathbf{x}}_k|}{\|\mathbf{a}_{i_k}\|_2\|\mathbf{a}_{j_k}\|_2}\right)\\
&=\|\overline{\mathbf{x}}_k\|_2^2-\epsilon_k^2(\| \mathbf{b}- 
A{\mathbf{x}_{k}} \|_{1}-\varrho_k)^2\\
&\leq\left(1-\epsilon_k^2(\sqrt{\lambda_{min}(A^*A)}-\rho_k)^2\right)\|\overline{\mathbf{x}}_k\|_2^2.
\end{aligned}
\end{equation}
Consequently, Eqs.\eqref{3-16} and \eqref{3-17} are true. Furthermore, by taking full expectations on both sides of \eqref{3-17}, it follows that
$$
\mathbb{E} \|\overline{\mathbf{x}}_{k+1}\| _{2}^2 \leq\left\{1- \left[\frac{1}{2}\left(\frac{\sqrt{\lambda_{min}(A^*A)}-\rho_k}{\Omega-\rho_k}+\frac{\sqrt{\lambda_{min}(A^*A)}-\rho_k}{\|A\|_{2,1}-\rho_k}\right)\right]^2\right\}\mathbb{E}\|\overline{\mathbf{x}}_k\|_2^2
$$
holds for $k=1,2,\cdots$. This implies that Eq.\eqref{3-18} follows immediately by induction.
\end{proof}

\subsection{Two-dimensional semi-randomized Kaczmarz methods} \label{subsection-3-3}

Note that both GRK and TGRK need to construct the indicator set $\mathcal{U}_k$ and compute probabilities during iterations, which require much more storage and operations than ones without the above formulas, especially in large data problems. To solve this problem, Jiang and Wu \cite{Jiang} proposed a semi-randomized Kaczmarz method, which outperforms the RK and GRK methods. They showed that the row corresponding to the current largest homogeneous residual is selected as the working row for higher computational efficiency. So, to improve the performance of TGRK, we propose a so-called two-dimensional ``semi-randomized" Kaczmarz (TSRK) method, built upon selecting the two active rows corresponding to the current largest and the second largest homogeneous residuals, for solving Eq.\eqref{1-1}.
\begin{algorithm}[H]
	\caption{Two-dimensional semi-randomized Kaczmarz method}
	\label{algo-3-5}
	\begin{algorithmic}[1]
        \REQUIRE $A,  \mathbf{b},  \mathbf{x}_{0}$\\
        \ENSURE $\mathbf{x}_{k+1}$\\
        \STATE
       For $k=0, 1, 2,\cdots $, until convergence, do:\\
            \STATE Select $i_k\in\{1,2,\dots,m\} $ satisfying $$\frac{|\mathbf{b}_{i_k}-\mathbf{a}_{i_k}\mathbf{x}_k|}{\|\mathbf{a}_{i_k}\|_2}=\max_{1\leq i \leq m}\frac{|\mathbf{b}_i-\mathbf{a}_{i}\mathbf{x}_k|}{\|\mathbf{a}_i\|_2}.$$
            \STATE Select $j_k\in\{1,2,\dots,m\}$ satisfying $$\frac{|\mathbf{b}_{j_k}-\mathbf{a}_{j_k}\mathbf{x}_k|}{\|\mathbf{a}_{j_k}\|_2}=\max_{1\leq j \leq m,j \neq i_k}\frac{|\mathbf{b}_j-\mathbf{a}_{j}\mathbf{x}_k|}{\|\mathbf{a}_j\|_2}.$$
            \STATE Update $\mathbf{x}_{k+1}$ as the line $3$ of Algorithm \ref{algo-3-1}.

	\end{algorithmic}  
\end{algorithm}

As seen, Algorithm \ref{algo-3-5} does not require computing probabilities or identifying index sets.
Compared with the TGRK solver, TSRK rapidly selects the two working rows at each iteration with less computational cost, which is a promising method. Then we consider the convergence of the TSRK solver. The following theorem demonstrates the convergence of the expectation analysis of the algorithm.


\begin{theorem}\label{The-7}
{\rm Let $\mathbf{x}_{\star}=A^{\dagger}\mathbf{b}$ be the solution of Eq.\eqref{1-1}. Then the iterative sequence $\{\mathbf{x}_{k}\}$ generated by Algorithm \ref{algo-3-5} converges to $\mathbf{x}_{\star}$ for any initial vector $\mathbf{x}_0$ in expectation. Moreover, the corresponding error norm in expectation satisfies
\begin{equation}\label{3-22}
\mathbb{E}_{\rm{k}} \|\overline{\mathbf{x}}_{k+1}\| _{2}^2 \leq(1-\frac{\lambda_{min}(A^*A)-\rho_k^2}{\Omega^2-\rho_k^2})\parallel \overline{\mathbf{x}}_k{\parallel _{2}^2},\qquad k=0,1,2,\cdots,
\end{equation}
where ${\Omega}=\max \limits_{1 \leq i_0\le m}\sum\limits_{\substack{i=1\\i\ne i_{0}}}^{m}\|\mathbf{a}_i\|_2$, and $\rho_k$ is defined in \eqref{3-14}}.
\end{theorem}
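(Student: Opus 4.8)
The plan is to establish the one-step contraction \eqref{3-22} pointwise and then to note that, since the two working rows of Algorithm~\ref{algo-3-5} are selected deterministically from $\mathbf{x}_k$, the conditional expectation $\mathbb{E}_{\rm{k}}$ acts trivially. First I would record that consistency of \eqref{1-1} gives $\mathbf{b}_i-\mathbf{a}_i\mathbf{x}_k=-\mathbf{a}_i\overline{\mathbf{x}}_k$, so the homogeneous residual $|\mathbf{b}_i-\mathbf{a}_i\mathbf{x}_k|/\|\mathbf{a}_i\|_2$ driving the selection equals $\tau_i:=|\mathbf{a}_i\overline{\mathbf{x}}_k|/\|\mathbf{a}_i\|_2$. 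Thus $i_k$ and $j_k$ are precisely the indices of the largest and the second largest of the $\tau_i$. Feeding this into the non-parallel estimate \eqref{3-27} of Theorem~\ref{The-1} gives
\begin{equation*}
\|\overline{\mathbf{x}}_{k+1}\|_2^2\le\|\overline{\mathbf{x}}_k\|_2^2-\tau_{i_k}\tau_{j_k},
\end{equation*}
so the whole problem reduces to bounding from below the product of the two largest $\tau_i$.

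Since $\tau_{i_k}\ge\tau_{j_k}$, I would first pass to $\tau_{i_k}\tau_{j_k}\ge\tau_{j_k}^2$ and then convert the discrete second maximum into global quantities by the mediant (ratio-of-sums) inequality applied to the squared residuals over the indices $i\neq i_k$,
\begin{equation*}
\tau_{j_k}^2=\max_{i\neq i_k}\frac{|\mathbf{a}_i\overline{\mathbf{x}}_k|^2}{\|\mathbf{a}_i\|_2^2}\ \ge\ \frac{\sum_{i\neq i_k}|\mathbf{a}_i\overline{\mathbf{x}}_k|^2}{\sum_{i\neq i_k}\|\mathbf{a}_i\|_2^2}.
\end{equation*}
This is the key device, analogous to the maximum-exceeds-average argument used for the semi-randomized method in \cite{Jiang}, and it is what introduces the spectral and norm quantities of \eqref{3-22}.

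To finish I would estimate numerator and denominator separately. For the numerator, $\sum_{i\neq i_k}|\mathbf{a}_i\overline{\mathbf{x}}_k|^2=\|A\overline{\mathbf{x}}_k\|_2^2-|\mathbf{a}_{i_k}\overline{\mathbf{x}}_k|^2$; using that the iterates remain in $\mathrm{range}(A^*)$ (each update adds multiples of $\mathbf{a}_{i_k}^*,\mathbf{a}_{j_k}^*$, so this holds once $\mathbf{x}_0\in\mathrm{range}(A^*)$) gives $\|A\overline{\mathbf{x}}_k\|_2^2\ge\lambda_{min}(A^*A)\|\overline{\mathbf{x}}_k\|_2^2$ with $\lambda_{min}$ the smallest nonzero eigenvalue, while $|\mathbf{a}_{i_k}\overline{\mathbf{x}}_k|^2\le\rho_k^2\|\overline{\mathbf{x}}_k\|_2^2$ by Cauchy--Schwarz and $\rho_k=\|\mathbf{a}_{i_k}\|_2$ from \eqref{3-14}. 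Together these yield the factor $\lambda_{min}(A^*A)-\rho_k^2$ in difference-of-squares form. For the denominator I would relate $\sum_{i\neq i_k}\|\mathbf{a}_i\|_2^2$ to $\Omega^2-\rho_k^2$, using $\Omega\ge\sum_{i\neq i_k}\|\mathbf{a}_i\|_2$ (take $i_0=i_k$ in the definition of $\Omega$) together with the elementary bound $\sum_{i\neq i_k}\|\mathbf{a}_i\|_2^2\le\bigl(\sum_{i\neq i_k}\|\mathbf{a}_i\|_2\bigr)^2$. Substituting both estimates gives $\|\overline{\mathbf{x}}_{k+1}\|_2^2\le\bigl(1-\frac{\lambda_{min}(A^*A)-\rho_k^2}{\Omega^2-\rho_k^2}\bigr)\|\overline{\mathbf{x}}_k\|_2^2$, which is \eqref{3-22}, and the expectation version is immediate.

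The main obstacle I anticipate is precisely this denominator step: the clean bookkeeping that removes the $i_k$-th row from numerator and denominator simultaneously so as to land exactly on the pair $\lambda_{min}(A^*A)-\rho_k^2$ and $\Omega^2-\rho_k^2$, and in particular verifying that $\sum_{i\neq i_k}\|\mathbf{a}_i\|_2^2$ is genuinely dominated by $\Omega^2-\rho_k^2$ rather than merely by $\Omega^2$ or by $\|A\|_F^2-\rho_k^2$. A secondary but essential point is justifying the smallest \emph{nonzero} eigenvalue (rather than $0$) as the correct constant, which is exactly where the invariance $\overline{\mathbf{x}}_k\in\mathrm{range}(A^*)$ is used; one should also confirm $\rho_k^2<\lambda_{min}(A^*A)$ so that the contraction factor is strictly below $1$.
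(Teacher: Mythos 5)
Your overall strategy parallels the paper's: both reduce the one-step decrease, via Eq.~\eqref{3-27}, to a lower bound on the product of the two largest homogeneous residuals, both obtain that lower bound by a ``maximum exceeds weighted average'' (mediant) device, and both extract the numerator $\lambda_{min}(A^*A)-\rho_k^2$ identically (residual norm bounded below by $\lambda_{min}(A^*A)\|\overline{\mathbf{x}}_k\|_2^2$ on $\mathrm{range}(A^*)$, minus the Cauchy--Schwarz bound $\rho_k^2\|\overline{\mathbf{x}}_k\|_2^2$ for the $i_k$-th term). The execution diverges exactly where you flag trouble. The paper keeps the product $\tau_{i_k}\tau_{j_k}$ and applies the mediant inequality to the \emph{double} sum over pairs $(i,j)\neq(i_k,i_k)$ with weights $\|\mathbf{a}_i\|_2\|\mathbf{a}_j\|_2$ (also discarding the pairs involving $i_{k-1},j_{k-1}$, whose residuals vanish); its denominator then factors as $\bigl(\sum_{i\ne i_{k-1}}\|\mathbf{a}_i\|_2\bigr)\bigl(\sum_{j\ne j_{k-1}}\|\mathbf{a}_j\|_2\bigr)-\|\mathbf{a}_{i_k}\|_2^2\le\Omega^2-\rho_k^2$, so the pair $(\Omega^2,\rho_k^2)$ appears for free, and its numerator $\|r_k\|_1^2-|\mathbf{a}_{i_k}\overline{\mathbf{x}}_k|^2$ dominates your $\ell_2$ version. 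You instead pass to $\tau_{j_k}^2$ and use a single-index mediant over squared quantities, which forces the denominator $\sum_{i\ne i_k}\|\mathbf{a}_i\|_2^2=\|A\|_F^2-\rho_k^2$.

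The genuine gap is that your stated tools do not close this last step: $\Omega\ge\sum_{i\ne i_k}\|\mathbf{a}_i\|_2$ together with $\sum_{i\ne i_k}\|\mathbf{a}_i\|_2^2\le\bigl(\sum_{i\ne i_k}\|\mathbf{a}_i\|_2\bigr)^2$ only gives $\sum_{i\ne i_k}\|\mathbf{a}_i\|_2^2\le\Omega^2$, not $\le\Omega^2-\rho_k^2$, and you leave the stronger bound as an ``anticipated obstacle'' rather than proving it. What is actually needed is $\|A\|_F^2\le\Omega^2$. This is \emph{false} for $m=2$ (two unit-norm rows give $\|A\|_F^2=2$ but $\Omega^2=1$), yet true for $m\ge3$: with $i_{\min}$ the index of smallest row norm, $\Omega^2=\bigl(\sum_{i\ne i_{\min}}\|\mathbf{a}_i\|_2\bigr)^2=\sum_{i\ne i_{\min}}\|\mathbf{a}_i\|_2^2+\sum_{i\ne j,\,i,j\ne i_{\min}}\|\mathbf{a}_i\|_2\|\mathbf{a}_j\|_2$, and the cross terms contain at least one pair $2\|\mathbf{a}_p\|_2\|\mathbf{a}_q\|_2\ge2\|\mathbf{a}_{i_{\min}}\|_2^2\ge\|\mathbf{a}_{i_{\min}}\|_2^2$, whence $\Omega^2\ge\|A\|_F^2$ and so $\sum_{i\ne i_k}\|\mathbf{a}_i\|_2^2\le\Omega^2-\rho_k^2$. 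With this lemma inserted your argument is complete, and in fact yields the sharper contraction factor $1-\frac{\lambda_{min}(A^*A)-\rho_k^2}{\|A\|_F^2-\rho_k^2}$; without it, your chain of inequalities does not reach \eqref{3-22}. Your remaining observations --- the triviality of $\mathbb{E}_{\rm k}$ for this deterministic selection, the invariance $\overline{\mathbf{x}}_k\in\mathrm{range}(A^*)$ needed to invoke the smallest nonzero eigenvalue, and the requirement $\rho_k^2<\lambda_{min}(A^*A)$ for the bound to be nontrivial --- are correct and are handled more carefully than in the paper.
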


\begin{proof}
From Algorithm \ref{algo-3-5}, we have    
$$
\begin{aligned}
\frac{|\mathbf{a}_{i_k}\overline{\mathbf{x}}_k||\mathbf{a}_{j_k}\overline{\mathbf{x}}_k|}{\|\mathbf{a}_{i_k}\|_2\|\mathbf{a}_{j_k}\|_2}&=\max_{1\leq i \leq m}\frac{|\mathbf{b}_i-\mathbf{a}_{i}\mathbf{x}_k|}{\|\mathbf{a}_i\|_2}\max_{1\leq j \leq m, j\ne i_k}\frac{|\mathbf{b}_j-\mathbf{a}_{j}\mathbf{x}_k|}{\|\mathbf{a}_j\|_2}\\
&=\frac{\max_{1\leq i \leq m}\frac{|\mathbf{b}_i-\mathbf{a}_{i}\mathbf{x}_k|}{\|\mathbf{a}_i\|_2}\max_{1\leq j \leq m, j\ne i_k}\frac{|\mathbf{b}_j-\mathbf{a}_{j}\mathbf{x}_k|}{\|\mathbf{a}_j\|_2}}{\|r_k\|_1^2-|\mathbf{a}_{i_k}\overline{\mathbf{x}}_k|^2}(\|r_k\|_1^2-|\mathbf{a}_{i_k}\overline{\mathbf{x}}_k|^2)\\
\end{aligned}$$
$$
\begin{aligned}
&=\frac{\max_{1\leq i \leq m}\frac{|\mathbf{b}_i-\mathbf{a}_{i}\mathbf{x}_k|}{\|\mathbf{a}_i\|_2}\max_{1\leq j \leq m, j\ne i_k}\frac{|\mathbf{b}_j-\mathbf{a}_{j}\mathbf{x}_k|}{\|\mathbf{a}_j\|_2}}{\sum\limits_{\substack{i,j=1\\(i,j)\neq(i_k,i_k)}}^{m}\|\mathbf{a}_i\|_2\|\mathbf{a}_j\|_2\frac{|\mathbf{a}_{i}\overline{\mathbf{x}}_k||\mathbf{a}_{j}\overline{\mathbf{x}}_k|}{\|\mathbf{a}_{i}\|_2\|\mathbf{a}_{j}\|_2}}(\|r_k\|_1^2-|\mathbf{a}_{i_k}\overline{\mathbf{x}}_k|^2)\\
&=\frac{\max_{1\leq i \leq m}\frac{|\mathbf{b}_i-\mathbf{a}_{i}\mathbf{x}_k|}{\|\mathbf{a}_i\|_2}\max_{1\leq j \leq m, j\ne i_k}\frac{|\mathbf{b}_j-\mathbf{a}_{j}\mathbf{x}_k|}{\|\mathbf{a}_j\|_2}}{\sum\limits_{\substack{i,j=1\\i\neq i_{k-1},j\neq j_{k-1}\\(i,j)\neq(i_k,i_k)}}^{m}\|\mathbf{a}_i\|_2\|\mathbf{a}_j\|_2\frac{|\mathbf{a}_{i}\overline{\mathbf{x}}_k||\mathbf{a}_{j}\overline{\mathbf{x}}_k|}{\|\mathbf{a}_{i}\|_2\|\mathbf{a}_{j}\|_2}}(\|r_k\|_1^2-|\mathbf{a}_{i_k}\overline{\mathbf{x}}_k|^2)\\
&\geq\frac{\|r_k\|_1^2-|\mathbf{a}_{i_k}\overline{\mathbf{x}}_k|^2}{\sum\limits_{\substack{i,j=1\\i\neq i_{k-1},j\neq j_{k-1}\\(i,j)\neq(i_k,i_k)}}^{m}\|\mathbf{a}_i\|_2\|\mathbf{a}_j\|_2}\\
&\geq\frac{\lambda_{min}(A^*A)-\|\mathbf{a}_{i_k}\|_2^2}{\Omega^2-\|\mathbf{a}_{i_k}\|_2^2}\|\overline{\mathbf{x}}_k\|_{2}^2\\
&=\frac{\lambda_{min}(A^*A)-\rho_k^2}{\Omega^2-\rho_k^2}\|\overline{\mathbf{x}}_k\|_{2}^2.
\end{aligned}
$$

According to the above result and Eq.\eqref{3-4}, it follows that
$$
\begin{aligned}
\mathbb{E}_{\rm{k}}\|\overline{\mathbf{x}}_{k+1}\|  _{2}^2
&= \|\overline{\mathbf{x}}_k\| _{2}^2 - {\mathbb{E}_{\rm{k}}}\parallel \mathbf{x}_{k+1} - \mathbf{x}_{k}{\parallel _{2}^2}\\
&=\parallel \overline{\mathbf{x}}_k{\parallel _{2}^2}-\frac{
\varphi_k}{\|\mathbf{a}_{i_k}\|_2^2\|\mathbf{a}_{j_k}\|_2^2-|\mathbf{a}_{i_k}\mathbf{a}_{j_k}^*|^2}\\
&\leq\parallel \overline{\mathbf{x}}_k{\parallel _{2}^2}-\frac{2\|\mathbf{a}_{i_k}\|_2\|\mathbf{a}_{j_k}\|_2|\mathbf{a}_{i_k}\overline{\mathbf{x}}_k||\mathbf{a}_{j_k}\overline{\mathbf{x}}_k|-2|\mathbf{a}_{i_k}\overline{\mathbf{x}}_k||\mathbf{a}_{j_k}\overline{\mathbf{x}}_k||\mathbf{a}_{i_k}\mathbf{a}_{j_k}^*|}{\|\mathbf{a}_{i_k}\|_2^2\|\mathbf{a}_{j_k}\|_2^2-|\mathbf{a}_{i_k}\mathbf{a}_{j_k}^*|^2}\\
&\leq\parallel \overline{\mathbf{x}}_k{\parallel _{2}^2}-\frac{|\mathbf{a}_{i_k}\overline{\mathbf{x}}_k||\mathbf{a}_{j_k}\overline{\mathbf{x}}_k|}{\|\mathbf{a}_{i_k}\|_2\|\mathbf{a}_{j_k}\|_2}\\
&\leq(1-\frac{\lambda_{min}(A^*A)-\rho_k^2}{\Omega^2-\rho_k^2})\parallel \overline{\mathbf{x}}_k{\parallel _{2}^2}.
\end{aligned}
$$
\end{proof}

\begin{rem}\label{Rem-4}
\rm As $\frac{\sqrt{\lambda_{min}(A^*A)}+\rho_k}{\Omega+\rho_k}\geq \frac{\sqrt{\lambda_{min}(A^*A)}-\rho_k}{\Omega-\rho_k} $ and $\|A\|_{2,1}\geq\Omega$, we have
$$
\left(1-\frac{\lambda_{min}(A^*A)-\rho_k^2}{\Omega^2-\rho_k^2}\right)\leq1- \left[\frac{1}{2}\left(\frac{\sqrt{\lambda_{min}(A^*A)}-\rho_k}{\Omega-\rho_k}+\frac{\sqrt{\lambda_{min}(A^*A)}-\rho_k}{\|A\|_{2,1}-\rho_k}\right)\right]^2.
$$
Therefore, the convergence factor of the TSRK solver is less than that of the TGRK solver.
\end{rem}

Indeed, the TSRK solver does not need to compute probabilities nor construct index sets, whereas it still needs to compute the residual vectors to determine the working row. This requires the whole information of the coefficient matrix, rendering the process time-consuming. To address this issue, from Theorem \ref{The-4} and Algorithm \ref{algo-3-3}, we propose a two-dimensional semi-randomized Kaczmarz method with simple random sampling (TSRKS), which can improve the performance of Algorithm \ref{algo-3-5}.
\begin{algorithm}[H]
	\caption{Two-dimensional semi-randomized Kaczmarz method with simple random sampling}
	\label{algo-3-6}
	\begin{algorithmic}[1]
        \REQUIRE $A,  \mathbf{b},  \mathbf{x}_{0}$\\
        \ENSURE $\mathbf{x}_{k+1}$\\
        \STATE
       For $k=0, 1, 2,\cdots $, until convergence, do:\\
               \STATE
        Generate a set of indicators $\Phi_k$, i.e., choosing $\eta m$ rows of $A$ by using the simple random sampling, where $0<\eta<1$.\\
            \STATE Select $i_k\in\Phi_k $ satisfying $$\frac{|\mathbf{b}_{i_k}-\mathbf{a}_{i_k}\mathbf{x}_k|}{\|\mathbf{a}_{i_k}\|_2}=\max_{ i \in \Phi_k}\frac{|\mathbf{b}_i-\mathbf{a}_{i}\mathbf{x}_k|}{\|\mathbf{a}_i\|_2}.$$
            \STATE Select $j_k\in \Phi_k$ satisfying $$\frac{|\mathbf{b}_{j_k}-\mathbf{a}_{j_k}\mathbf{x}_k|}{\|\mathbf{a}_{j_k}\|_2}=\max_{ j \in \Phi_k,j \neq i_k}\frac{|\mathbf{b}_j-\mathbf{a}_{j}\mathbf{x}_k|}{\|\mathbf{a}_j\|_2}.$$
            \STATE Update $\mathbf{x}_{k+1}$ as the line $3$ of Algorithm \ref{algo-3-1}.

	\end{algorithmic}  
\end{algorithm}

\begin{rem}\label{Rem-5}
\rm In Algorithm \ref{algo-3-6}, simple random sampling requires a parameter $\eta$, and its specific choice is problem-dependent. For coefficient matrices of different dimensions, the choice of parameter is different. In practice, we set the parameter $\eta$ as $0.1$ or $0.01$. 
\end{rem}

In what follows, we are ready to consider the convergence of Algorithm \ref{algo-3-6}, here we assume that the sample size of $\Phi_k$ is $\eta m$. 
\begin{theorem}\label{The-8}
{\rm Let $\mathbf{x}_{\star}=A^{\dagger}\mathbf{b}$ be the solution of Eq.\eqref{1-1}. Then the iterative sequence $\{\mathbf{x}_{k}\}$ generated by Algorithm \ref{algo-3-3} converges to $\mathbf{x}_{\star}$ for any initial vector $\mathbf{x}_0$ in expectation. Moreover, for $ k=0,1,2,\cdots$,  the corresponding error norm in expectation satisfies
}
\begin{equation}\label{3-23}
\mathbb{E}_{\rm{k}}\|\overline{\mathbf{x}}_{k+1}\|  _{2} ^2\leq\left(1-(\frac{1-\varepsilon_k}{1+\tilde{\varepsilon}_k})\frac{\lambda_{min}(A^*A)-\rho_k^2}{\Omega^2-\rho_k^2}\right)\|\overline{\mathbf{x}}_k\|_2^2.
\end{equation}
\end{theorem}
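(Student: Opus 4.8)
The plan is to combine the greedy ``max-over-sample'' argument used for the TSRK method in Theorem \ref{The-7} with the simple-random-sampling approximation developed for Algorithm \ref{algo-3-3} in Theorem \ref{The-5}. I would begin from the exact one-step identity \eqref{3-4}, namely $\|\overline{\mathbf{x}}_{k+1}\|_2^2 = \|\overline{\mathbf{x}}_k\|_2^2 - \|\mathbf{x}_{k+1}-\mathbf{x}_k\|_2^2$, and then lower-bound the progress term. As in the proof of Theorem \ref{The-7}, the non-parallel reduction gives $\|\mathbf{x}_{k+1}-\mathbf{x}_k\|_2^2 \geq \frac{|\mathbf{a}_{i_k}\overline{\mathbf{x}}_k||\mathbf{a}_{j_k}\overline{\mathbf{x}}_k|}{\|\mathbf{a}_{i_k}\|_2\|\mathbf{a}_{j_k}\|_2}$; the crucial change is that now $i_k$ and $j_k$ index the largest and second-largest homogeneous residuals \emph{restricted to the sample set} $\Phi_k$, so that $\frac{|\mathbf{a}_{i_k}\overline{\mathbf{x}}_k||\mathbf{a}_{j_k}\overline{\mathbf{x}}_k|}{\|\mathbf{a}_{i_k}\|_2\|\mathbf{a}_{j_k}\|_2}$ equals the product of the two maxima taken over $\Phi_k$.

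Next I would reproduce the ``maximum dominates the weighted average'' manipulation of Theorem \ref{The-7}, but with every sum running over $\Phi_k$ instead of $[m]$. Writing the product of maxima as itself divided by, and then multiplied by, $\sum_{i,j\in\Phi_k}\|\mathbf{a}_i\|_2\|\mathbf{a}_j\|_2\frac{|\mathbf{a}_i\overline{\mathbf{x}}_k||\mathbf{a}_j\overline{\mathbf{x}}_k|}{\|\mathbf{a}_i\|_2\|\mathbf{a}_j\|_2}$ (with the already-chosen rows $i_{k-1},j_{k-1}$ dropped, since their residual entries vanish exactly as in the proof of Theorem \ref{The-6}, and the diagonal $(i_k,i_k)$ term removed), the fact that each maximum bounds every individual term from above collapses the numerator to a sum of residual products over $\Phi_k$ and leaves a denominator of the form $\sum_{i,j\in\Phi_k}\|\mathbf{a}_i\|_2\|\mathbf{a}_j\|_2$.

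The heart of the argument is then to replace these two sample sums by their full-index counterparts. By Chebyshev's weak law of large numbers (Theorem \ref{The-4}), for $\eta m$ sufficiently large there exist $0<\varepsilon_k,\tilde{\varepsilon}_k\ll 1$ such that the residual-product sum over $\Phi_k$ equals the corresponding sum over $[m]$ up to a factor $(1\pm\varepsilon_k)$, while the row-norm-product sum over $\Phi_k$ equals its full counterpart up to $(1\pm\tilde{\varepsilon}_k)$, exactly in the spirit of \eqref{3-11}--\eqref{3-12}. Selecting the worst-case signs produces the multiplicative factor $\frac{1-\varepsilon_k}{1+\tilde{\varepsilon}_k}$. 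Finally, using $\|r_k\|_1^2 \geq \|A\overline{\mathbf{x}}_k\|_2^2 \geq \lambda_{min}(A^*A)\|\overline{\mathbf{x}}_k\|_2^2$ together with $|\mathbf{a}_{i_k}\overline{\mathbf{x}}_k|^2 \leq \|\mathbf{a}_{i_k}\|_2^2\|\overline{\mathbf{x}}_k\|_2^2 = \rho_k^2\|\overline{\mathbf{x}}_k\|_2^2$ in the numerator, and $\sum_{i,j}\|\mathbf{a}_i\|_2\|\mathbf{a}_j\|_2 \leq \Omega^2$ minus the $\rho_k^2$ correction in the denominator, gives the contraction factor $\frac{\lambda_{min}(A^*A)-\rho_k^2}{\Omega^2-\rho_k^2}$, and hence the bound \eqref{3-23}. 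Convergence in expectation then follows by taking full expectations and invoking the law of iterated expectations, as in the earlier theorems.

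I expect the main obstacle to be the careful bookkeeping of which index sets appear under each sum after the sampling restriction — in particular, making the two applications of the weak law of large numbers rigorous so that the residual sum contributes $\varepsilon_k$ and the row-norm sum contributes $\tilde{\varepsilon}_k$, and verifying that dropping the rows $i_{k-1},j_{k-1}$ and the diagonal term does not disturb the $\rho_k^2$ corrections that land in both numerator and denominator of the final factor.
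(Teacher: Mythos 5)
Your proposal follows essentially the same route as the paper's own proof: starting from the identity \eqref{3-4}, bounding the progress term by the product of the two maxima over $\Phi_k$, dominating the weighted average of residual products over the sample (with the vanishing rows $i_{k-1},j_{k-1}$ and the diagonal term excluded), invoking Theorem \ref{The-4} twice to trade the sample sums for full sums at the cost of the factor $\frac{1-\varepsilon_k}{1+\tilde{\varepsilon}_k}$, and finishing with $\|r_k\|_1^2\geq\lambda_{min}(A^*A)\|\overline{\mathbf{x}}_k\|_2^2$, $|\mathbf{a}_{i_k}\overline{\mathbf{x}}_k|^2\leq\rho_k^2\|\overline{\mathbf{x}}_k\|_2^2$, and the $\Omega^2-\rho_k^2$ bound on the denominator. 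The plan is correct and matches the paper's argument in both structure and detail.
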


\begin{proof}
From Theorem \ref{The-4}, if $m$ is large enough
and is sufficiently large, then we assume that there are two scalars $0<\varepsilon_k$, $\tilde{\varepsilon}_k\ll1$ such that
\begin{equation}\label{3-24}
\frac{\sum\limits_{\substack{i,j\in \Phi_k\\(i,j)\neq(i_k,i_k)}}|\mathbf{a}_{i}\overline{\mathbf{x}}_k||\mathbf{a}_{j}\overline{\mathbf{x}}_k|}{(\eta m)^2-1}=\frac{\|r_k\|_1^2-|\mathbf{a}_{i_k}\overline{\mathbf{x}}_k|^2}{(m-1)^2-1}(1\pm\varepsilon_k ) 
\end{equation}
and
\begin{equation}\label{3-25}
\frac{\sum\limits_{\substack{i,j\in \Phi_k\\(i,j)\neq(i_k,i_k)}}\|\mathbf{a}_i\|_2\|\mathbf{a}_j\|_2}{(\eta m)^2-1}=\frac{\sum\limits_{\substack{i,j=1\\i\neq i_{k-1},j\neq j_{k-1}\\(i,j)\neq(i_k,i_k)}}^{m}\|\mathbf{a}_i\|_2\|\mathbf{a}_j\|_2}{(m-1)^2-1}(1\pm\tilde{\varepsilon}_k ).
\end{equation}
$$
\begin{aligned}
&\frac{|\mathbf{a}_{i_k}\overline{\mathbf{x}}_k||\mathbf{a}_{j_k}\overline{\mathbf{x}}_k|}{\|\mathbf{a}_{i_k}\|_2\|\mathbf{a}_{j_k}\|_2}=\max_{i\in\Phi_k}\frac{|\mathbf{b}_i-\mathbf{a}_{i}\mathbf{x}_k|}{\|\mathbf{a}_i\|_2}\max_{j\in\Phi_k, j\ne i_k}\frac{|\mathbf{b}_j-\mathbf{a}_{j}\mathbf{x}_k|}{\|\mathbf{a}_j\|_2}\\
&=\frac{\max_{i\in\Phi_k}\frac{|\mathbf{b}_i-\mathbf{a}_{i}\mathbf{x}_k|}{\|\mathbf{a}_i\|_2}\max_{j\in\Phi_k, j\ne i_k}\frac{|\mathbf{b}_j-\mathbf{a}_{j}\mathbf{x}_k|}{\|\mathbf{a}_j\|_2}}{\sum\limits_{\substack{i,j\in \Phi_k\\(i,j)\neq(i_k,i_k)}}|\mathbf{a}_{i}\overline{\mathbf{x}}_k||\mathbf{a}_{j}\overline{\mathbf{x}}_k|}\left(\sum\limits_{\substack{i,j\in \Phi_k\\(i,j)\neq(i_k,i_k)}}|\mathbf{a}_{i}\overline{\mathbf{x}}_k||\mathbf{a}_{j}\overline{\mathbf{x}}_k|\right)\\
&=\frac{\max_{i\in\Phi_k}\frac{|\mathbf{b}_i-\mathbf{a}_{i}\mathbf{x}_k|}{\|\mathbf{a}_i\|_2}\max_{j\in\Phi_k, j\ne i_k}\frac{|\mathbf{b}_j-\mathbf{a}_{j}\mathbf{x}_k|}{\|\mathbf{a}_j\|_2}}{\sum\limits_{\substack{i,j\in \Phi_k\\(i,j)\neq(i_k,i_k)}}\|\mathbf{a}_i\|_2\|\mathbf{a}_j\|_2\frac{|\mathbf{a}_{i}\overline{\mathbf{x}}_k||\mathbf{a}_{j}\overline{\mathbf{x}}_k|}{\|\mathbf{a}_{i}\|_2\|\mathbf{a}_{j}\|_2}}\left(\sum\limits_{\substack{i,j\in \Phi_k\\(i,j)\neq(i_k,i_k)}}|\mathbf{a}_{i}\overline{\mathbf{x}}_k||\mathbf{a}_{j}\overline{\mathbf{x}}_k|\right)\\
&\geq\frac{\sum\limits_{\substack{i,j\in \Phi_k\\(i,j)\neq(i_k,i_k)}}|\mathbf{a}_{i}\overline{\mathbf{x}}_k||\mathbf{a}_{j}\overline{\mathbf{x}}_k|}{\sum\limits_{\substack{i,j\in \Phi_k\\(i,j)\neq(i_k,i_k)}}\|\mathbf{a}_i\|_2\|\mathbf{a}_j\|_2}.\\
\end{aligned}
$$
From Eq.\eqref{3-4} and the above relations, it follows that
$$
\begin{aligned}
\mathbb{E}_{\rm{k}} \|\overline{\mathbf{x}}_{k+1}\|_{2}^2 
&= \parallel \overline{\mathbf{x}}_k{\parallel _{2}^2} - {\mathbb{E}_{\rm{k}}}\parallel \mathbf{x}_{k+1} - \mathbf{x}_{k}{\parallel _{2}^2}\\
&\leq\parallel \overline{\mathbf{x}}_k{\parallel _{2}^2}-\frac{|\mathbf{a}_{i_k}\overline{\mathbf{x}}_k||\mathbf{a}_{j_k}\overline{\mathbf{x}}_k|}{\|\mathbf{a}_{i_k}\|_2\|\mathbf{a}_{j_k}\|_2}\\
&\leq\parallel \overline{\mathbf{x}}_k{\parallel _{2}^2}-\frac{\sum\limits_{\substack{i,j\in \Phi_k\\(i,j)\neq(i_k,i_k)}}|\mathbf{a}_{i}\overline{\mathbf{x}}_k||\mathbf{a}_{j}\overline{\mathbf{x}}_k|}{\sum\limits_{\substack{i,j\in \Phi_k\\(i,j)\neq(i_k,i_k)}}\|\mathbf{a}_i\|_2\|\mathbf{a}_j\|_2}\\
&=\parallel \overline{\mathbf{x}}_k{\parallel _{2}^2}-\frac{\frac{1}{(\eta m)^2-1}\sum\limits_{\substack{i,j\in \Phi_k\\(i,j)\neq(i_k,i_k)}}|\mathbf{a}_{i}\overline{\mathbf{x}}_k||\mathbf{a}_{j}\overline{\mathbf{x}}_k|}{\frac{1}{(\eta m)^2-1}\sum\limits_{\substack{i,j\in \Phi_k\\(i,j)\neq(i_k,i_k)}}\|\mathbf{a}_i\|_2\|\mathbf{a}_j\|_2}\\
&=\parallel \overline{\mathbf{x}}_k{\parallel _{2}^2}-(\frac{1\pm\varepsilon_k}{1\pm\tilde{\varepsilon}_k })\frac{\|r_k\|_1^2-|\mathbf{a}_{i_k}\overline{\mathbf{x}}_k|^2}{\sum\limits_{\substack{i,j=1\\i\neq i_{k-1},j\neq j_{k-1}\\(i,j)\neq(i_k,i_k)}}^{m}\|\mathbf{a}_i\|_2\|\mathbf{a}_j\|_2}\\
&\leq \left(1-(\frac{1-\varepsilon_k}{1+\tilde{\varepsilon}_k})\frac{\lambda_{min}(A^*A)-\rho_k^2}{\Omega^2-\rho_k^2}\right)\parallel \overline{\mathbf{x}}_k{\parallel _{2}^2}.
\end{aligned}
$$
\end{proof}
Compared Eq.\eqref{3-22} with Eq.\eqref{3-23}, one can see that the convergence factor of Algorithm \ref{algo-3-6} is slightly larger than that of Algorithm \ref{algo-3-5}. Therefore, the former may require more iterations than the latter, but consume less time in practice since the work rows of Algorithm \ref{algo-3-6} can be quickly selected by computing a small number of samples.


\section{Numerical Experiments}\label{section-4}
\label{sec:experiments}

In this section, we first report the obtained numerical results when the proposed methods and some existing Kaczmarz-type methods are performed to solve the reconstruction problem of non-uniformly sampled band-limited signals, randomly generated underdetermined and overdetermined problems, and then we illustrate the feasibility and validity of the proposed solvers compared with those methods when they are applied for image deblurring problems. Denote by ``IT", the number of iterations, and by ``CPU", the elapsed computing time in seconds. For each algorithm, we report the mean computing time in seconds and the mean number of iterations based on their average values of $5$ repeated tests. Additionally, all experiments have been carried out in Matlab
2021b on a personal computer with Inter(R) Core(TM) i5-13500HX @2.5GHz and 16.00 GB memory. Unless otherwise stated, the stopping
criteria of all tested methods are either the residual norm at the current iterate $\mathbf{x}_{k}$ satisfying 
$$\mathrm{RES}:=\left \| r_{k}  \right \| _{2}= \left \| \mathbf{b}-A\mathbf{x}_{k}  \right \| _{2}< 10^{-6}$$  or the maximum number of iterations reaching 800000. The symbol $\dagger$ indicates that the GTRK solver converges slightly slower than that of TRKS. Moreover, the initial guess for all tested
algorithms is chosen to be a zero vector. In Algorithm \ref{algo-3-3} and Algorithm \ref{algo-3-6}, we randomly select $\left \lceil l \cdot m \right \rceil$ and $\left \lceil \eta \cdot m \right \rceil$ integers from $m$ integers with equal probability to construct the indicator set $\Gamma_k=randperm(m,round(l \cdot m))$, and $ \Phi_k=randperm(m,round(\eta \cdot m))$. 
All test methods are as follows:\\
$\star$ GTRK: Generalized two-subspace randomized Kaczmarz method \cite{WWT}.\\
$\star$ TRKS: Two-dimensional randomized Kaczmarz method with simple random
sampling.\\
$\star$ GRK: Greedy randomized Kaczmarz method \cite{Bai}.\\
$\star$ TGRK: Two-dimensional greedy randomized Kaczmarz method.\\
$\star$ SRK: Semi-randomized Kaczmarz method \cite{Jiang}.\\
$\star$ TSRK: Two-dimensional semi-randomized Kaczmarz method.\\
$\star$ SRKS: Semi-randomized Kaczmarz method with simple random sampling \cite{Jiang}.\\
$\star$ TSRKS: Two-dimensional semi-randomized Kaczmarz method with simple random sampling.\\
The speed-up of TRKS, TGRK, TSRK, and TSRKS against GTRK, GRK, SRK, and SRKS is given by
$$\mathrm{speed} \text{-} \mathrm{up}=\frac{\mathrm{CPU\; of\; GTRK,GRK,SRK,or\,SRKS}}{\mathrm{CPU\; of \;TRKS,TGRK,TSRK,or \,TSRKS}}.$$ 
\begin{example}\label{Example-1}
\rm In this example, we consider the reconstruction of non-uniformly sampled bandlimited signals. Reconstructing a band-limited function $f$ from its non-uniformly spaced sampled values $\{f(t_k)\}$ is a classic problem in Fourier analysis. Here we consider trigonometric polynomials \cite{Gröchenig}, since they are applicable to the reconstruction problem of non-uniformly sampled band-limited signals. Define $f(t)=\sum_{l=-r}^{r}\mathbf{x}_le^{2 \pi ilt}$, where $\mathbf{x}=\{\mathbf{x}_l\}_{l=-r}^{r}\in \mathbb{C}^{2r+1}$. Suppose that a series of non-uniform sampling points $\{t_k\}_{k=1}^m$ and corresponding sampling values $\{f(t_k)\}_{k=1}^m$ are given, then we need to recover $f$ (or equivalently $ \mathbf{x}$) through this series of sampling values. 
Actually, the solution space of the $j$-th equation of this problem is given by the hyperplane
$$\{y:\langle y,D_r(-t_j)\rangle=f(t_j)\},$$
where $D_r(t)=\sum_{k=-r}^re^{2\pi ikt}$ denotes the Dirichlet kernel. Feichtinger and Gröchenig \cite{H.G.} believed that should consider the weighted Dirichlet kernels $\sqrt{w_j}D_r(-t_j)$ rather than $D_r(-t_j)$, where the weight $w_j=\frac{t_{j+1}-t_{j-1}}{2}, j=1,\cdots,m$. The weights should compensate for the variations in density within the sampling set.

Formulating the resulting conditions in the Fourier domain, the problem can transformed into solving the following linear equations \cite{Strohmer,Gröchenig}
\begin{equation}\label{4-1}
A\mathbf{x}=\mathbf{b},
\end{equation}
in which the entries of $A$ and $\mathbf{b}$  are given by $A_{j,k}=\sqrt{w_j}e^{2\pi ikt_j},$ and $\mathbf{b}_j=\sqrt{w_j}f(t_j)$, respectively, with $j=1,2,\dots,m, k=-r,\dots,r$. Denote $n:=2r+1$, for different $m, r$, we generate sampling points $t_j$ by extracting them randomly from a uniform distribution of $[0,1]$, and order them by magnitude. 

We performed the GTRK, TRKS, GRK, TGRK, SRK, TSRK, SRKS, and TSRKS methods to solve the Eq.\eqref{4-1}, and then recorded the obtained numerical results in Table \ref{Table-1}. As shown in this table, one can see that the number of iterations and computing time corresponding to TRKS are slightly more than GTRK, but the TGRK, TSRK, and TSRKS solvers are commonly less than that of GRK, SRK, and SRKS solvers, respectively. Specifically, the speed-up is at least $1.44$ and at most $1.66$ for TGRK compared with GRK, is at least $1.29$ and at most $1.48$ for TSRK compared with SRK, is at least $1.42$ and at most $1.71$ for TSRKS compared with SRKS. Moreover, we displayed
 the convergence curves of all methods with the case $m=3000$ and $n=301$ in Fig.\ref{Fig:1}, which coincide with the above observations.
\end{example}

\begin{table}[!htbp]
    \centering
    		\caption{Numerical results for Example \ref{Example-1} with $l=\eta=0.01$.\label{Table-1}}
    \begin{tabular}{|c|l|l|l|l|l|}
    \hline
         \multicolumn{2}{|c|}{$m\times n$} & $1000\times 101$ & $2000\times 201$ &$3000\times 301$& $4000\times 401$ \\ \hline
        \multirow{2}*{GTRK} & IT & 1260 & 2540 &4003& 5293 \\ 
        ~ & CPU &0.1062 & 0.3477 & 1.0659 & 2.5383\\ \hline

        \multirow{2}*{TRKS} & IT & 1291 & 2601 &4166& 5435 \\ 
        ~ & CPU &0.1266 & 0.3940 & 1.3370 & 3.0246\\ \hline
        \multicolumn{2}{|c|}{\textbf{speed-up}} & \textbf{$\dagger$} & \textbf{$\dagger$} & \textbf{$\dagger$} & \textbf{$\dagger$} \\ \hline
        \multirow{2}*{GRK} & IT & 611 &  1223 & 1871& 2507 \\ 
        ~ & CPU &  0.0673 & 0.2266& 0.6938 &  1.5477\\ \hline
        
        \multirow{2}*{TGRK}  & IT & 356 & 678 & 1012& 1375\\ 
        ~ & CPU &0.0468 & 0.1422 &  0.4172 &  0.9317 \\ \hline
        \multicolumn{2}{|c|}{\textbf{speed-up}} & \textbf{1.44} & \textbf{1.59} & \textbf{1.66} & \textbf{1.66} \\ \hline
        \multirow{2}*{SRK}  & IT & 516 & 1055 & 1581& 2134\\ 
        ~ & CPU &0.0552 & 0.1772 &  0.5332 &  1.3157 \\ \hline

        \multirow{2}*{TSRK}  & IT & 407 & 773 & 1110& 1436\\ 
        ~ & CPU &0.0374 & 0.1370 &  0.3899 &  0.9527 \\ \hline
        \multicolumn{2}{|c|}{\textbf{speed-up}} & \textbf{1.48} & \textbf{1.29} & \textbf{1.37} & \textbf{1.38} \\ \hline
        
        \multirow{2}*{SRKS}  & IT & 854 & 1437 & 2042& 2665\\ 
        ~ & CPU &0.0526 & 0.1767 &  0.5041 &  1.3022 \\ \hline

        \multirow{2}*{TSRKS}  & IT & 539 &829 & 1130& 1443\\ 
        ~ & CPU &\textbf{0.0370} & \textbf{0.1159} &  \textbf{0.2931} &  \textbf{0.7618} \\ \hline
        \multicolumn{2}{|c|}{\textbf{speed-up}} & \textbf{1.42} & \textbf{1.54} & \textbf{1.72} & \textbf{1.71} \\ \hline
    \end{tabular}
\end{table}

\begin{figure}[!htbp]
	\begin{center}
		\begin{minipage}[c]{0.6\textwidth}
			\includegraphics[width=2.8in]{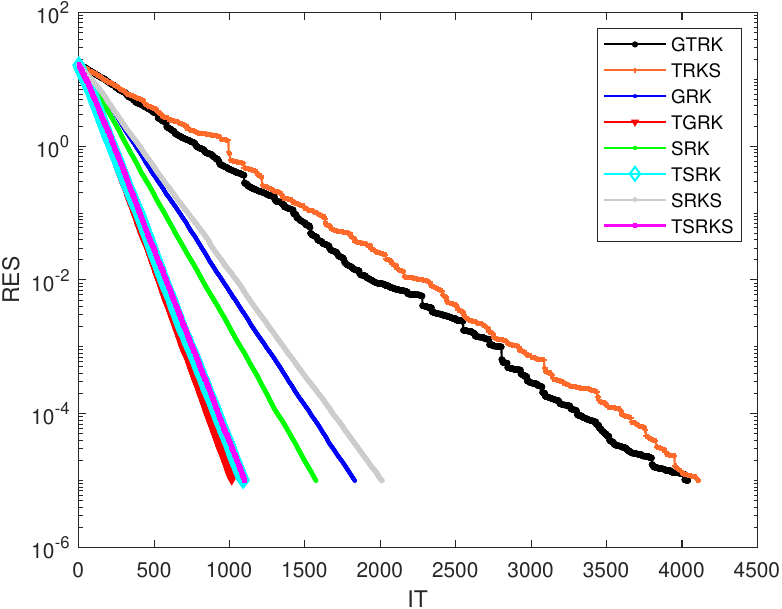}
		\end{minipage}\hspace{3.2em}
	\end{center}
	\vspace{0em}\caption{Convergence curves of Example \ref{Example-1} with the case $3000\times 301$.\label{Fig:1}}
\end{figure}

\begin{example}\label{Example-2}
\rm Consider solving Eq.\eqref{1-1} with its coefficient matrix given by $A=randn(m,n)$, where the symbol $randn(\cdot)$ denotes a MATLAB built-in function, which randomly generates some synthetic data. Moreover, we randomly generate a solution $\mathbf{x}_\star$ by the MATLAB function $randn$, with $\mathbf{x}_\star=randn(n,1)$, such that the right-hand side vector $\mathbf{b}$ is set to be $A\mathbf{x}_\star$.
The stopping criterion for the overdetermined cases is of the form
$$\mathrm{RES}:=  \frac{\| \mathbf{x}_{\star}-\mathbf{x}_{k} \| _{2}^2}{\|\mathbf{x}_{k}\|_2^2}< 10^{-6}.$$

\begin{table}[!htbp]
    \centering
    		\caption{Numerical results for Example \ref{Example-2} with $l=\eta=0.005$.\label{Table-2}}
    \begin{tabular}{|c|l|l|l|l|l|}
    \hline
        \multicolumn{2}{|c|}{$m\times n$} & $1000\times 200$ & $4000\times 600$ &$6000\times 800$& $10000\times 1000$ \\ \hline

        \multirow{2}*{GTRK} & IT & 1524 & 4219&5404&6371 \\ 
        ~ & CPU &0.0347   & 0.3027& 0.4493&0.7291\\ \hline

        \multirow{2}*{TRKS} & IT & 1550  &4164& 5359&6493 \\ 
        ~ & CPU &0.0383 & 0.2457&0.4032&0.7899\\ \hline
        \multicolumn{2}{|c|}{\textbf{speed-up}} & \textbf{$\dagger$} & \textbf{1.23} & \textbf{1.11} & \textbf{$\dagger$} \\ \hline
        \multirow{2}*{GRK} & IT & 455  & 1141&1427&1540 \\ 
        ~ & CPU &  0.0406 &  0.3857&1.1507&2.9888\\ \hline
        
        \multirow{2}*{TGRK}  & IT & 266 & 655&817&896\\ 
        ~ & CPU &0.0214   &  0.2339&0.6623&1.7602 \\ \hline
        \multicolumn{2}{|c|}{\textbf{speed-up}} & \textbf{1.90} & \textbf{1.65} & \textbf{1.74} & \textbf{1.70} \\ \hline
        \multirow{2}*{SRK}  & IT & 414 & 1037&1325&1407\\ 
        ~ & CPU &0.0255   &  0.2313 &0.8779&2.5864\\ \hline

        \multirow{2}*{TSRK}  & IT & 220  & 532&677&708\\ 
        ~ & CPU &\textbf{0.0109}  &  \textbf{0.1269}&0.4690&1.3611 \\ \hline
        \multicolumn{2}{|c|}{\textbf{speed-up}} & \textbf{2.34} & \textbf{1.82} & \textbf{1.87} & \textbf{1.90} \\ \hline
        
        \multirow{2}*{SRKS}  & IT & 1061  & 1707&1964&2044\\ 
        ~ & CPU &0.0217 &  0.2140&0.5831&1.3653 \\ \hline

        \multirow{2}*{TSRKS}  & IT & 737  & 1015&1142&1160\\ 
        ~ & CPU &0.0160   &  0.1311 &\textbf{0.3475}&\textbf{0.7798}\\ \hline
        \multicolumn{2}{|c|}{\textbf{speed-up}} & \textbf{1.34} & \textbf{1.63} & \textbf{1.68} & \textbf{1.75} \\ \hline
    \end{tabular}
\end{table}

\begin{table}[!htbp]
    \centering
    		\caption{Numerical results for Example \ref{Example-2}  with  $l=0.0001$, $\eta=0.001$.\label{Table-3}}
    \begin{tabular}{|c|l|l|l|l|l|}
    \hline
        \multicolumn{2}{|c|}{$m\times n$} & $200000\times 50$ & $200000\times 200$ &$200000\times 1000$& $200000\times 2000$\\ \hline
        \multirow{2}*{GTRK} & IT & 292 & 1148& 5744&11733 \\ 
        ~ & CPU &0.3617 & 1.3427& 7.2773&15.8114\\ \hline

        \multirow{2}*{TRKS} & IT & 274 & 1160& 5765&11638 \\ 
        ~ & CPU &0.0123  & 0.1766& 3.7783&14.2632\\ \hline
        \multicolumn{2}{|c|}{\textbf{speed-up}} & \textbf{29.40} & \textbf{7.60} & \textbf{1.93} & \textbf{1.11} \\ \hline
        
        \multirow{2}*{GRK} & IT & 45 & 173& 887&1875 \\ 
        ~ & CPU &  0.3233 &  1.8114&  28.5966&114.0079\\ \hline
        
        \multirow{2}*{TGRK}  & IT & 35 & 128& 606&1257 \\ 
        ~ & CPU &0.2174  &  1.2502& 20.7140& 79.3002\\ \hline
        \multicolumn{2}{|c|}{\textbf{speed-up}} & \textbf{1.49} & \textbf{1.45} & \textbf{1.38} & \textbf{1.44} \\ \hline
        \multirow{2}*{SRK} & IT & 26 & 116 &700& 1534 \\ 
        ~ & CPU &0.0929 & 0.8737 & 20.9293 &  92.8542\\ \hline
        
        \multirow{2}*{TSRK} & IT &16 &  61 &346& 769 \\ 
        ~ & CPU &  0.0544 & 0.4512& 10.2582 &   46.1732\\ \hline
        \multicolumn{2}{|c|}{\textbf{speed-up}} & \textbf{1.71} & \textbf{1.94} & \textbf{2.04} & \textbf{2.01} \\ \hline        
        \multirow{2}*{SRKS}  & IT & 62 & 261 & 1310& 2631 \\ 
        ~ & CPU &0.0128 & 0.1728 &  3.9808 &   18.6871\\ \hline
        
        \multirow{2}*{TSRKS}  & IT & 38 & 146 & 726& 1451 \\ 
        ~ & CPU &\textbf{0.0077} & \textbf{0.0954} &  \textbf{2.2312} & \textbf{10.3733}\\ \hline
        \multicolumn{2}{|c|}{\textbf{speed-up}} & \textbf{1.66} & \textbf{1.81} & \textbf{1.78} & \textbf{1.80}\\ \hline

    \end{tabular}
\end{table}

\begin{table}[!htbp]
    \centering
    		\caption{Numerical results for Example \ref{Example-2} with $l,\eta=0.1$.\label{Table-4}}
    \begin{tabular}{|c|l|l|l|l|l|}
    \hline
        \multicolumn{2}{|c|}{$m\times n$} & $100\times 1000$  &$300\times 5000$& $400\times 6000$& $500\times 7000$ \\ \hline
        \multirow{2}*{GTRK} & IT & 1772 & 4993& 6950&9115 \\ 
        ~ & CPU &0.1131 & 1.4865& 2.5586&4.6173\\ \hline

        \multirow{2}*{TRKS} & IT & 1727 & 5187& 6814&8829 \\ 
        ~ & CPU &0.1174  & 1.5703& 2.7450&5.0334\\ \hline
        \multicolumn{2}{|c|}{\textbf{speed-up}} & \textbf{$\dagger$} & \textbf{$\dagger$} & \textbf{$\dagger$} & \textbf{$\dagger$} \\ \hline
        \multirow{2}*{GRK} & IT & 896 & 2413& 3292&4285 \\ 
        ~ & CPU &  0.0510 &  0.7120& 1.2701&2.3085\\ \hline
        
        \multirow{2}*{TGRK}  & IT & 480 & 1223& 1681&2223 \\ 
        ~ & CPU &0.0320  &  0.3899& 0.6773& 1.2674\\ \hline
        \multicolumn{2}{|c|}{\textbf{speed-up}} & \textbf{1.59} & \textbf{1.83} & \textbf{1.88} & \textbf{1.82}\\ \hline

        \multirow{2}*{SRK}  & IT & 930  & 2314& 3264&4236\\ 
        ~ & CPU &0.0486  &  0.6558 & 1.2141&2.1946\\ \hline

        \multirow{2}*{TSRK}  & IT & 464  & 1176& 1647&2121\\ 
        ~ & CPU &\textbf{0.0230}  & \textbf{0.3588}& \textbf{0.6469}&\textbf{1.1472}\\ \hline
        \multicolumn{2}{|c|}{\textbf{speed-up}} & \textbf{2.11} & \textbf{1.83} & \textbf{1.88} & \textbf{1.91}\\ \hline
        
        \multirow{2}*{SRKS}  & IT & 949  & 2386& 3254&4307\\ 
        ~ & CPU &0.0520   &  0.6108 & 1.0841&2.0190\\ \hline

        \multirow{2}*{TSRKS}  & IT & 497  & 1225& 1635&2201\\ 
        ~ & CPU &0.0260  &  0.3720 & 0.6491&1.2185\\ \hline
        \multicolumn{2}{|c|}{\textbf{speed-up}} & \textbf{2.00} & \textbf{1.64} & \textbf{1.67} & \textbf{1.66}\\ \hline

    \end{tabular}
\end{table}

\begin{figure}[htbp]
\begin{center}
	\begin{minipage}[c]{1\textwidth}
		\includegraphics[width=2.49in]{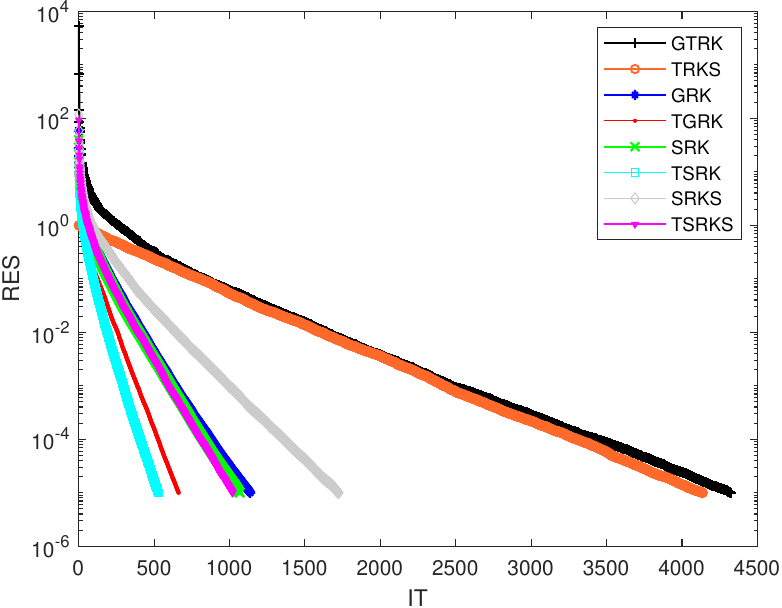} 
		\includegraphics[width=2.49in]{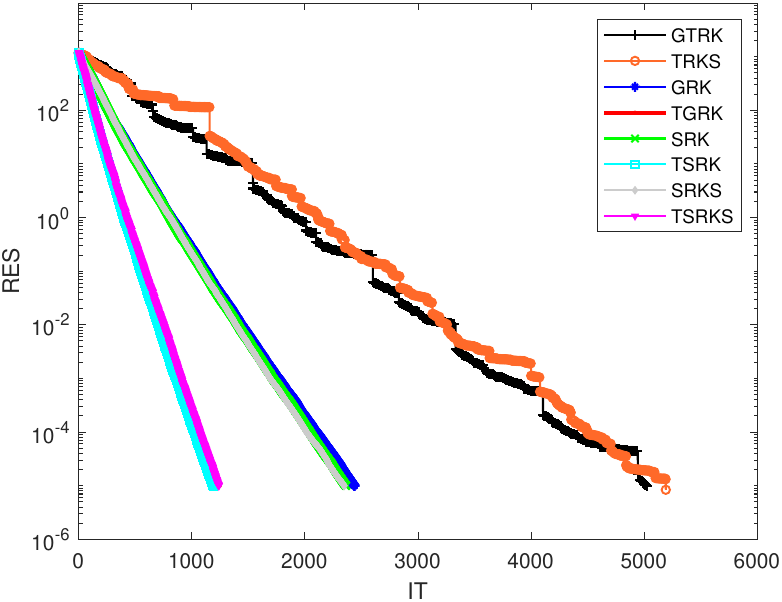}
	\end{minipage}
	\caption{ Convergence curves of Example \ref{Example-2} with the cases $4000\times 600$ (left) and $300\times 5000$ (right).}\label{Fig:2}
 \end{center}
\end{figure}

We implemented all the test methods to solve Eq.\eqref{1-1} and then listed the obtained numerical results in Tables \ref{Table-2}-\ref{Table-4}. One can see from these tables that, for all cases, TGRK, TSRK, and TSRKS solvers perform much better than GRK, SRK, and SRKS solvers in terms of the number of iterations and CPU time with significant
speed-ups, respectively. Note that, in the overdetermined case, the speed-up of TGRK against GRK is at least $1.38$ with $200000\times 1000$, and at most attaining $1.90$ with $1000\times 200$. The speed-up of TSRK against SRK is at least $1.71$ with $200000\times 50$, and at most attaining even $2.34$ with $1000\times 200$. The speed-up of TSRKS against SRKS is at least $1.34$ with $1000\times 200$, and at most attaining $1.81$ with $200000\times 200$. 
In the underdetermined case, the speed-up is at least $1.59$ for the TGRK and GRK solvers with $100\times 1000$, and at most $2.11$ for 
the TSRK and SRK solvers with $100\times 1000$. 

From the above results, we can draw two statements. One is that the two-dimensional Kaczamrz-type methods perform more competitively than traditional one-dimensional methods under the same strategy, since two working rows possess more valid information at each iteration. Moreover, we can also see that the computing time required by TSRK and TSRKS solvers is much less than that of other solvers, which perform best among other solvers. The reason is that, at each iteration, both of them only select the two active rows corresponding to the current largest and the second largest homogeneous residuals, and the latter requires a small part of the rows of the coefficient matrix, which saves computational operations and storage. The convergence curves of all methods with $4000\times 600$ and $300\times 5000$ were displayed in Fig.\ref{Fig:2}. From this figure, we can intuitively obtain that the proposed two-dimensional Kaczamrz-type methods converge faster than the one-dimensional ones.
\end{example}

\begin{small}
\begin{table}[!htbp]
    \centering
    \caption{Numerical results  of Example \ref{Example-3} with  $l=0.01,\eta=0.1$  \label{Table-5}}
    \setlength{\tabcolsep}{0.7mm}
    \begin{tabular}{|l|c|c|c|c|c|c|}
    \hline
        \multicolumn{2}{|c|}{Matrix} & D\_7 & cage8 &df2177 & mk11-b2&photogrammetry2 \\ \hline
        \multicolumn{2}{|c|}{$ m \times n$} & $1270\times971$ & $1015\times1015$ & $630\times10358$ & $6930\times990$ & $4472\times936$\\ \hline
        \multicolumn{2}{|c|}{Density}  & 1.03\%& 1.07\% & 0.34\% & 3.03\% & 0.89\% \\ \hline
        \multicolumn{2}{|c|}{Cond$\left ( A \right ) $ } & Inf& 11.41 & 2.01 & 2.39e+15 & 1.34e+08 \\ \hline
        GTRK & IT &  690414  & 224305 & 8935 &         15672  & 48984\\ 
        ~ & CPU & 50.8282 & 11.4302 & 1.6967&1.6078& 4.9297\\ \hline
        TRKS & IT &  615519 & 176353 & 8889 & 15818  & 50234\\ 
        ~ & CPU & 40.6518& 8.5208&  1.5537&2.1016& 4.3723\\ \hline
        \multicolumn{2}{|c|}{\textbf{speed-up}} & \textbf{1.25} & \textbf{1.34} & \textbf{1.10} & \textbf{$\dagger$} &\textbf{1.13}\\ \hline
        GRK & IT &  90047 & 7887
 & 4290 &  4091 & 10164\\ 
        ~ & CPU & 6.6980 & 0.4006 &  0.9212& 0.5619& 1.1015\\ \hline

        TGRK & IT & 47357 & 3981 &2170 & 2375&  5462 \\ 
        ~ & CPU & 3.7472 & 0.2303 & 0.4190 & 0.3515&  0.7162\\ \hline
        \multicolumn{2}{|c|}{\textbf{speed-up}} & \textbf{1.79} & \textbf{1.74} & \textbf{2.20} & \textbf{1.60} &\textbf{1.54}\\ \hline

        \multirow{2}*{SRK} & IT &86378 & 8254 &3785 &  3654&  9710\\ 
        ~ & CPU & 5.6188 & 0.3772 & 0.7814 &   0.2218 & 0.6253\\ \hline

        \multirow{2}*{TSRK} & IT &44277  & 4083 &1891 & 1844 &  6166\\ 
        ~ & CPU & \textbf{3.0664} & \textbf{0.2012} & \textbf{0.3494} & \textbf{0.1356}& 0.4410\\ \hline
        \multicolumn{2}{|c|}{\textbf{speed-up}} & \textbf{1.83} & \textbf{1.87} & \textbf{2.24} & \textbf{1.64} &\textbf{1.42}\\ \hline
        \multirow{2}*{SRKS} & IT &  89686 & 8329 &3793 & 3756 & 10065\\ 
        ~ & CPU & 5.2169 & 0.3456 & 0.5992 & 0.2493& 0.6379\\ \hline

        \multirow{2}*{TSRKS} & IT & 50012 & 4225 &1913 &1931 & 5089\\ 
        ~ & CPU &3.6592 & 0.2107& 0.3605 & 0.1467& \textbf{0.3848}\\ \hline
        \multicolumn{2}{|c|}{\textbf{speed-up}} & \textbf{1.43} & \textbf{1.64} & \textbf{1.66} & \textbf{1.70} &\textbf{1.66}\\ \hline
    \end{tabular}
\end{table}
\end{small}
\begin{figure}[htbp]
\begin{center}
	\begin{minipage}[c]{0.95\textwidth}
		\includegraphics[width=2.4in]{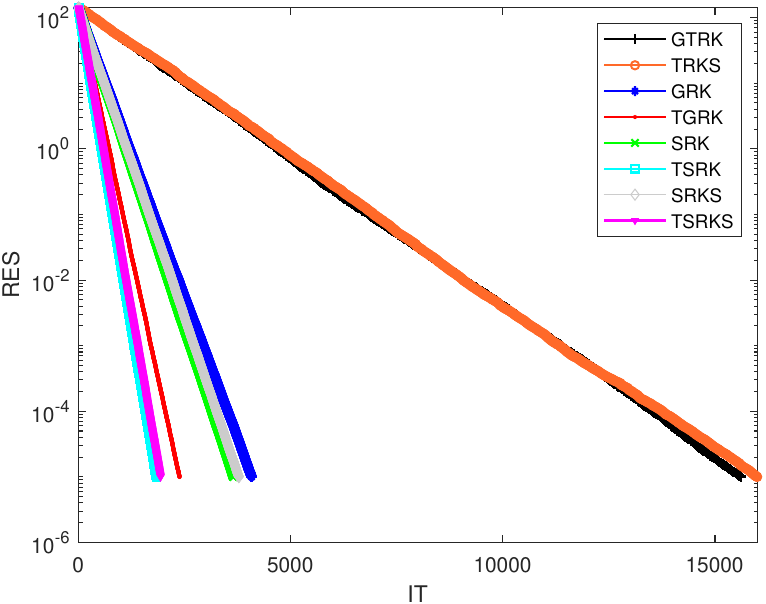}\
		\includegraphics[width=2.44in]{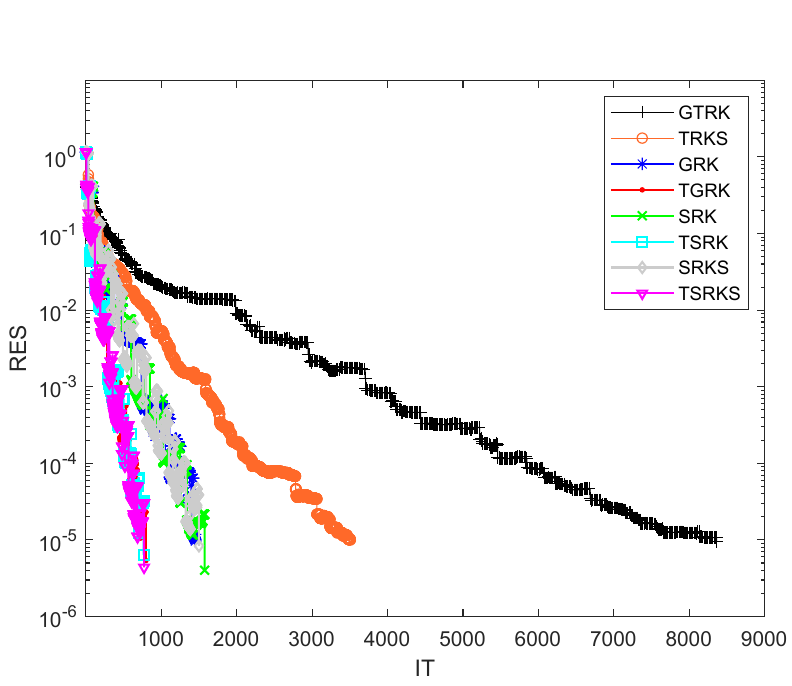}
	\end{minipage}
	\caption{Convergence curves of Example \ref{Example-3} with the cases mk11-b2 (left) and pivtol (right).}\label{Fig:3}
 \end{center}
\end{figure}
\begin{figure}[htbp]
\begin{center}
	\begin{minipage}[c]{0.95\textwidth}
		\includegraphics[width=2.4in]{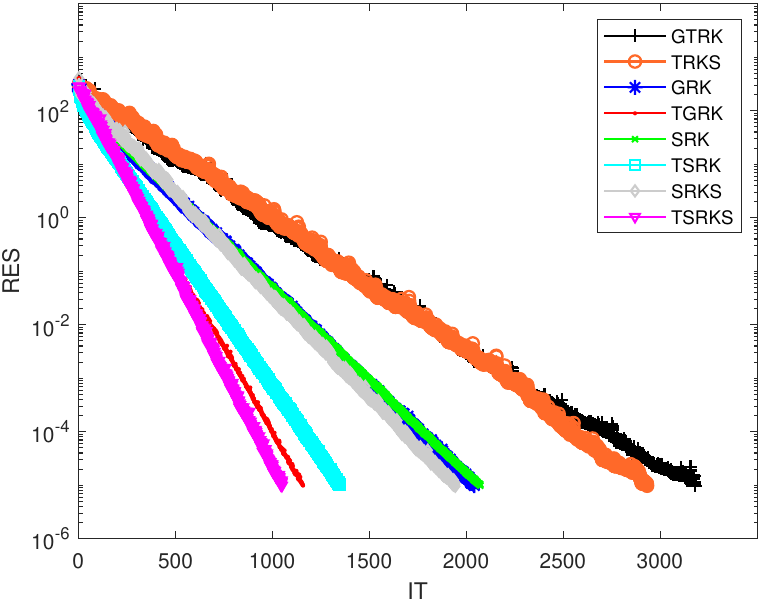}\
		\includegraphics[width=2.4in]{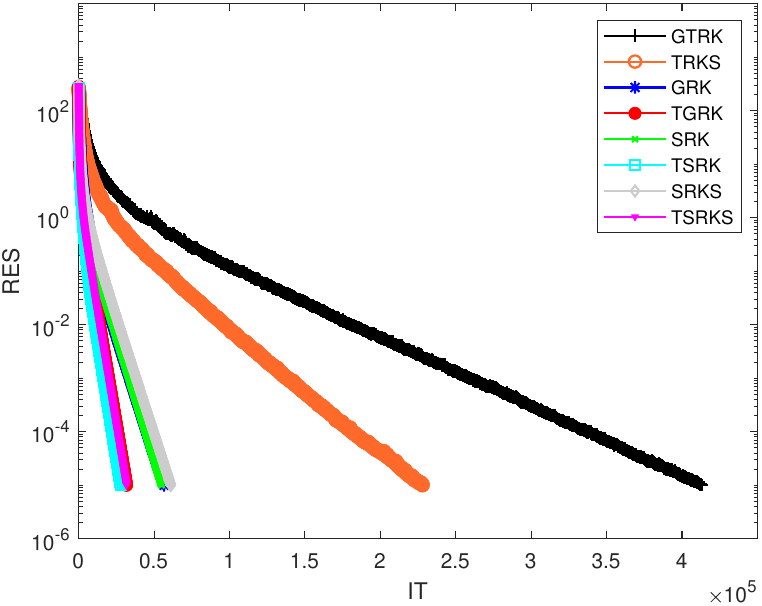}
	\end{minipage}
	\caption{Convergence curves of Example \ref{Example-3} with the cases bibd\_15\_7 (left) and relat7 (right).}\label{Fig:4}
 \end{center}
\end{figure}
\begin{example}\label{Example-3}
\rm Consider solving Eq.\eqref{1-1} with its coefficient matrix coming from the University of Florida Sparse Matrix Collection \cite{Davis}. The chosen matrices are mainly used for the least-squares, statistical mathematical, combinatorial, and linear programming problems, which possess certain structures, such as thin ($m>n$) (e.g., D\_7, mk11-b2 ), fat ($m<n$) (e.g., cari, nemsafm), or square ($m=n$) (e.g., cage8, pivtol). The right-hand side vectors are generated similarly in Example \ref{Example-2}. 

In Tables \ref{Table-5}-\ref{Table-7}, we reported the numerical results regarding the iterations and computing time. As shown in these tables, a rough conclusion is that the TGRK, TSRK, and TSRKS solvers perform much better than the GRK, SRK, and SRKS solvers, respectively, and the speed-up of TRKS is generally improved in comparison with GTRK, especially in Franz10 reached $30.96$. More precisely, the speed-up of TGRK compared with GRK is at least $1.26$ with the case relat7, at most attaining $2.20$ with the case df2177, TSRK compared with SRK is at least $1.25$ with the case shar\_te2-b1, at most attaining $2.24$ with the case df2177, TSRKS compared with SRKS is at least $1.33$ with the case abtaha1, at most attaining $1.80$ with the case cari.
This reflects that those methods with the simple random sampling strategy are more favorable for big data problems, and the proposed two-dimensional Kaczmarz-type methods, based on two carefully selected working rows at each iteration, are more competitive than those of one-dimensional ones. Additionally, we depicted the convergence curves of all tested methods with the cases mk11-b2 and pivtol in Fig.\ref{Fig:3}, the cases bibd\_15\_7 and relat7 in Fig.\ref{Fig:4}. As seen, the TGRK, TSRK, and TSRKS solvers still work more effectively than the GRK, SRK, and SRKS solvers, respectively, which coincides with the abovementioned conclusion.
\end{example}

\begin{table}[!htbp]
    \centering
    \caption{Numerical results  of Example \ref{Example-3}, $l,\eta=0.1$ \label{Table-6}}
    \begin{tabular}{|l|c|c|c|c|c|c|}
    \hline
        \multicolumn{2}{|c|}{Matrix} & cari & pivtol &bibd\_15\_7 & nemsafm &rosen1 \\ \hline
        \multicolumn{2}{|c|}{$ m \times n$} & $400\times1200$ & $102\times102$ & $105\times6435$  & $334\times2348$ & $520\times1544$\\ \hline
        \multicolumn{2}{|c|}{Density}  & 31.83\% & 2.94\% & 20\% &  0.36\% & 2.96\% \\ \hline
        \multicolumn{2}{|c|}{Cond$\left ( A \right ) $ } & 3.13& 109.61 & 7.65 &  4.77 & 58.48 \\ \hline
        GTRK & IT &  4770 & 8349& 3175 & 18486 &393175 \\
        ~ & CPU &  0.5889 & 0.2834 & 0.9026 & 1.0455&  29.3738\\ \hline
        TRKS & IT &  4629& 3314& 2972 &  16290&374092 \\ 
        ~ & CPU & 0.6023 & 0.1037 & 0.7843& 0.9245&  33.2062\\ \hline
        \multicolumn{2}{|c|}{\textbf{speed-up}} & \textbf{$\dagger$} & \textbf{2.73} & \textbf{1.15} & \textbf{1.13} &\textbf{$\dagger$}\\ \hline
        GRK & IT &  2134 & 1490 & 2040  &  2777 &22188 \\ 
        ~ & CPU & 0.2389 & 0.0350 & 0.5775&0.1534& 1.3104\\ \hline
        
        TGRK & IT & 1046 &  820 &  1146&  1429& 11360\\ 
        ~ & CPU & 0.1353 & 0.0247 & 0.3315 & 0.0844& 0.7519\\ \hline
        \multicolumn{2}{|c|}{\textbf{speed-up}} & \textbf{1.77} & \textbf{1.42} & \textbf{1.75} & \textbf{1.60} &\textbf{1.74}\\ \hline

        \multirow{2}*{SRK} & IT & 2277 & 1532&2117 & 2996& 21477\\ 
        ~ & CPU & 0.2370 & 0.0344 & 0.5889 &  0.1517& 1.1872\\ \hline

        TSRK & IT & 1121 & 744& 1350 & 1469 & 10809\\ 
        ~ & CPU &  0.1360 & \textbf{0.0216} & \textbf{ 0.3805} & \textbf{0.0785}& 0.6812\\ \hline
       \multicolumn{2}{|c|}{\textbf{speed-up}} & \textbf{1.74} & \textbf{1.59} & \textbf{1.55} & \textbf{1.93} &\textbf{1.74}\\ \hline
        SRKS & IT & 2199 & 1542 & 1955 & 2985 & 21928\\ 
        ~ & CPU & 0.2254 &0.0320 & 0.4300 & 0.1294& 1.1037\\ \hline

        TSRKS & IT & 1047 & 782 & 1055 & 1487& 10376\\ 
        ~ & CPU & \textbf{0.1250} & 0.0225 & 0.2918& 0.0797& \textbf{0.6362}\\ \hline
        \multicolumn{2}{|c|}{\textbf{speed-up}} & \textbf{1.80} & \textbf{1.42} & \textbf{1.47} & \textbf{1.62} &\textbf{1.73}\\ \hline
    \end{tabular}
\end{table}

\begin{table}[!htbp]
    \centering
    \caption{Numerical results  of Example \ref{Example-3} with $l=0.001,\eta=0.01$ \label{Table-7}}
    \setlength{\tabcolsep}{0.7mm}
    \begin{tabular}{|l|c|c|c|c|c|c|}
    \hline
        \multicolumn{2}{|c|}{Matrix} & shar\_te2-b1 & abtaha1 &abtaha2 & relat7 &Franz10 \\ \hline
        \multicolumn{2}{|c|}{$ m \times n$} & $17160\times286$ & $14596\times209$ & $37932\times331$ & $21924\times1045$ & $19588\times4164$\\ \hline
        \multicolumn{2}{|c|}{Density}  & 0.70\% & 1.68\% & 1.09\% & 0.36\% & 0.12\% \\ \hline
        \multicolumn{2}{|c|}{Cond$\left ( A \right ) $ } & 1.56e+13& 12.23 & 12.22 & Inf & 1.27e+16 \\ \hline
        GTRK & IT &  4966 &  122429 & 210653
 & 417898 &176833 \\ 
        ~ & CPU &  0.8564 & 22.1822 &  108.0814&402.3947& 614.1634\\ \hline
        TRKS & IT &  4702 &  120614 & 209497
 & 232083 &90414 \\ 
        ~ & CPU &   0.7364 & 16.4418 & 130.2689&45.7278&  19.8401\\ \hline
        \multicolumn{2}{|c|}{\textbf{speed-up}} & \textbf{1.16} & \textbf{1.35} & \textbf{$\dagger$} & \textbf{8.80} &\textbf{30.96}\\ \hline
        GRK & IT &  1090 & 2388 & 2805 & 57840 &22188 \\ 
        ~ & CPU &  0.3462 & 0.5726 &   1.6882&19.9134& 13.1041\\ \hline
        
        TGRK & IT & 708 &  1607 &1898 & 31516& 11360\\ 
        ~ & CPU &  0.1812 &  0.4108 & 1.2024 &15.8139& 7.5192\\ \hline
        \multicolumn{2}{|c|}{\textbf{speed-up}} & \textbf{1.91} & \textbf{1.39} & \textbf{1.40} & \textbf{1.26} &\textbf{1.74}\\ \hline

        \multirow{2}*{SRK} & IT &  891 & 1807 &1915 & 54150& 24136\\ 
        ~ & CPU & 0.1354 & 0.2786 & 0.6996 & 11.5236& 6.2769\\ \hline

        TSRK & IT &  746 & 970 &987 & 28677& 12837\\ 
        ~ & CPU & 0.1054 &  \textbf{0.1685} &  \textbf{0.4050} & 7.1957& 3.2327\\ \hline
        \multicolumn{2}{|c|}{\textbf{speed-up}} & \textbf{1.25} & \textbf{1.65} & \textbf{1.73} & \textbf{1.60} &\textbf{1.94}\\ \hline
        SRKS & IT & 1187 & 4764 &3927 &59966& 24854\\ 
        ~ & CPU & 0.1080 & 0.4737 & 1.1283 &10.3533& 4.9984\\ \hline

        TSRKS & IT & 650 & 2864 & 2363 & 29045&  12858\\ 
        ~ & CPU &  \textbf{0.0754} & 0.3572 & 0.6804 &  \textbf{6.0947}& \textbf{2.8876}\\ \hline
        \multicolumn{2}{|c|}{\textbf{speed-up}} & \textbf{1.43} & \textbf{1.33} & \textbf{1.66} & \textbf{1.70} &\textbf{1.73}\\ \hline
        
    \end{tabular}
\end{table}

To further demonstrate the effectiveness of the proposed methods, we ran all tested methods to solve Eq.\eqref{1-1} in image deblurring problems, and then
compared them numerically.  Denote by $X\in\mathbb{R}^{n\times n}$, a real matrix of order $n$ that can be regarded as a grayscale image image, 
and by $\mathbf{x}=\mbox{vec}(X)\in\mathbb{R}^{n^2}$, a vector generated by stacking its columns. We now assume that there exists a blurring matrix $A$, which models various blurs, e.g., shake blur and Gaussian blur, corrupting the image $\mathbf{x}$ so that $\mathbf{b}$ is the observed image. Then the blurring image $X$ will be recovered if Eq.\eqref{1-1} is solved quickly. The performance of all solvers is evaluated by the peak signal-to-noise ratio ({PSNR}) in decibels ({dB}), structural similarity ({SSIM}), and error norm ({EN}). Define
$$\begin{aligned}
\mbox{PSNR}(\mathbf{x}_k)&=10\log_{10}(\frac{mnd^2}{\|\mathbf{x}-\mathbf{x}_k\|_2^2}),\quad \mbox{EN}(\mathbf{x}_k)=\frac{\|\mathbf{x}-\mathbf{x}_k\|_2}{\|\mathbf{x}\|_2},  \\
\mbox{SSIM}(\mathbf{x}_k)&=\frac{(2\mu_{\mathbf{x}}\mu_{\mathbf{x}_k}+c_1)(2\sigma_{\mathbf{x}\mathbf{x}_k}+c_2)}{(\mu_{\mathbf{x}}^2+\mu_{\mathbf{x}_k}^2+c_1)(\sigma_{\mathbf{x}}^2+\sigma_{\mathbf{x}_k}^2+c_2)},
\end{aligned}
$$
where $\mu_{(\mathbf{x})}$, $\mu_{(\mathbf{x}_k)}$ and $\sigma_{(\mathbf{x})}$, $\sigma_{(\mathbf{x}_k)}$  are the means and variances of the original image  $\mathbf{x}$ and the restored color image $\mathbf{x}_k$, respectively, $\sigma_{\mathbf{x}\mathbf{x}_k}$ is the covariance between $\mathbf{x}$ and $\mathbf{x}_k$, $c_1=(0.01L)^2, c_2=(0.03L)^2$ with $L$ being the dynamic range of pixel values, and $d=255$.

\begin{table}[!ht]
    \centering
    \caption{ Numerical results for Example \ref{Example-4} with $l=0.001$\label{Table-8}}
    \begin{tabular}{|llllllll|}
    \hline 
        Images &~&GRK & TGRK & SRK& TSRK& SRKS &TSRKS\\ \hline
         \multirow{3}*{Brain}&PSNR & 32.6632 & 33.9135  & 33.4544 & 34.1195& 33.8799& 34.3792\\
        &SSIM & 0.9278 & 0.9342  & 0.9362&0.9410& 0.9368& 0.9379 \\
        &EN & 0.0646 & 0.0563  & 0.0590& 0.0547& 0.0563&0.0533\\ \hline
    
         \multirow{3}*{Flower}&PSNR & 27.8215 & 30.3581  & 29.5524 & 30.5970& 29.68068& 30.9137\\
        &SSIM & 0.7316 & 0.8378  & 0.8165&0.8564& 0.8171 & 0.8607 \\
        &EN & 0.0814 & 0.0608  & 0.0666& 0.0591& 0.0657&0.0570 \\ \hline
        
         \multirow{3}*{House}&PSNR & 24.1345 & 25.7946  & 25.2515 & 26.4440& 25.4077& 26.5850\\
        &SSIM & 0.5888 & 0.6662  & 0.6486&0.7029& 0.6540 & 0.7046 \\
        &EN & 0.0852 & 0.0702  & 0.0746& 0.0649& 0.0732&0.0640 \\ \hline
    \end{tabular}
\end{table}

\begin{example}\label{Example-4}{\rm Consider the grayscale images deblurring problems. Here the tested 
grayscale images of size $128\times 128$ including Brain, Flower, and House have blurred a matrix $A=A_1\otimes A_2\in {\mathbb{R}^{128^2 \times 128^2}}$ \cite{Xin-Fang,Li111}, in which ${A_1} = {({a_{ij}}^{(1)})_{1 \le i,j \le 128}}$ and ${A_2} = {({a_{ij}}^{(2)})_{1 \le i,j \le 128}}$ are the Toeplitz matrixs with their entries given by
$$
a_{ij}^{(1)}=\left\{\begin{aligned}	
&\frac{1}{\sigma\sqrt{2\pi}}\exp(-\frac{(i-j)^2}{2\sigma^2}),\ \ |i-j|\leq r\\
&0,\ \ \ \  \mbox{otherwise}
\end{aligned}
\right.\ \ \ \ \ a_{ij}^{(2)}=\left\{\begin{aligned}	
	&\frac{1}{2s-1},\ \ |i-j|\leq s\\
	&0,\ \ \ \  \mbox{otherwise}
\end{aligned}
\right.
$$
which simulates the Gaussian blur. For the purpose of comparison, we set the case Brain with $r=s=2,$ the case Flower with $r=s=4$, and the case House with $r=s=6$. We compare the six tested methods in the above examples, except for the GTRK and TRKS solvers, since their restoration quality is weak even if they consume much computing time.

After executing 60 seconds, we recorded the numerical results of all tested methods in Table \ref{Table-8}.  It follows from this table that, the PNSR and SSIM values corresponding to TGRK, TSRK, and TSRKS solvers are commonly more than GRK, SRK, and SRKS, respectively. This illustrates that the restoration quality of the proposed two-dimensional Kaczmarz methods is higher than the one-dimensional ones. Moreover, we can see that the TSRKS solver restores images with superior quality compared to the other solvers. The reason is that, by selecting the two active rows corresponding to the current largest and the second largest homogeneous residuals at each step and requiring small rows of $A$, the storage and computing time are vastly less than other solvers.
The resorted images were displayed in Figs.\ref{Fig:5}-\ref{Fig:7}, which illustrates the same conclusions.}
\end{example}

 \begin{figure}[htbp]
\begin{center}
	\begin{minipage}[c]{1.0\textwidth}
		\includegraphics[width=5in]{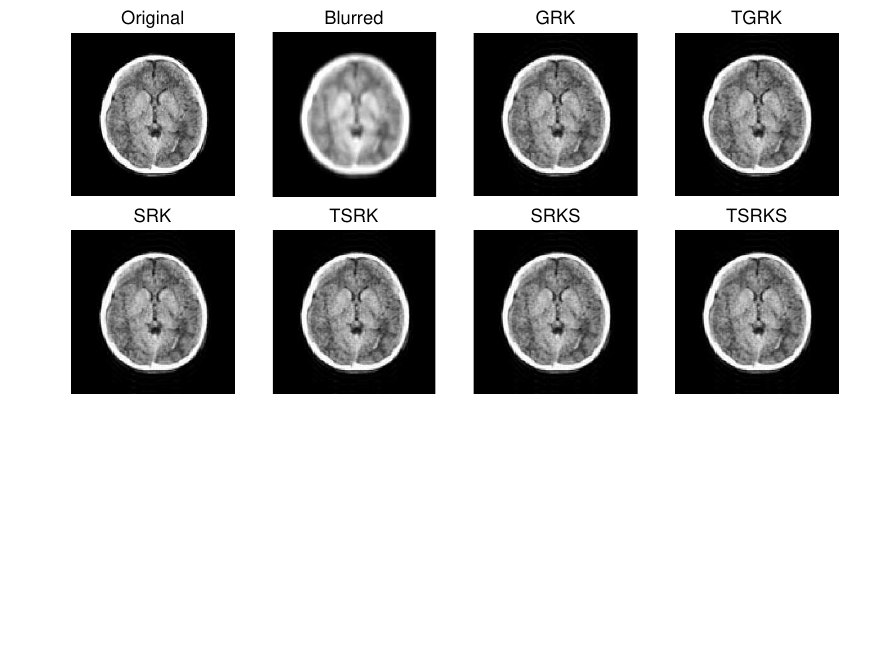}
	\end{minipage}
\end{center}
	\vspace{-10em}\caption{Brain with $r=2,s=2$ \label{Fig:5}}
\end{figure}
 \begin{figure}[htbp]
\begin{center}
	\begin{minipage}[c]{1.0\textwidth}
		\includegraphics[width=5in]{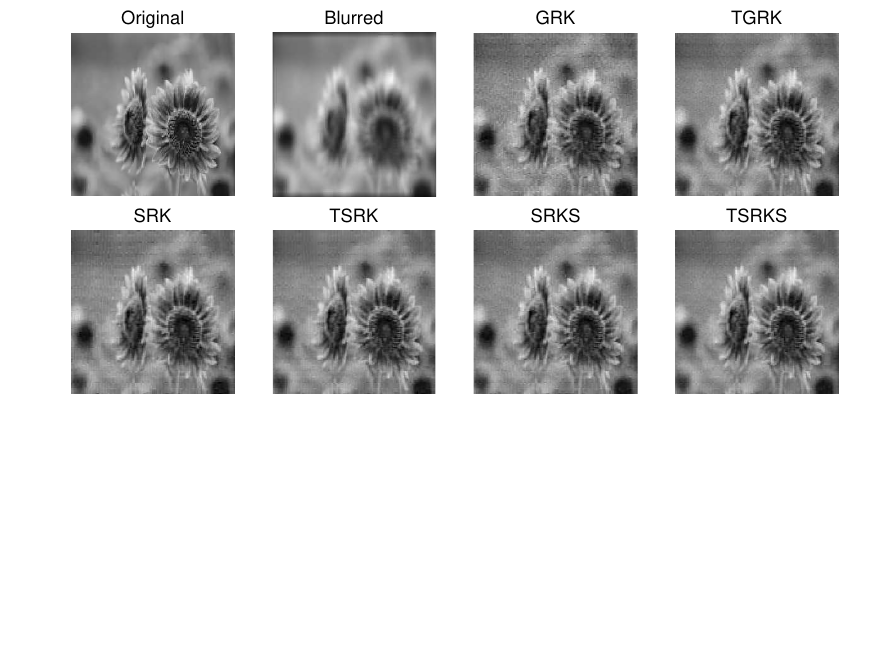}
	\end{minipage}
\end{center}
	\vspace{-10em}\caption{Flower with $r=4,s=4$ \label{Fig:6}}
\end{figure}

 \begin{figure}[htbp]
\begin{center}
	\begin{minipage}[c]{1.0\textwidth}
		\includegraphics[width=5in]{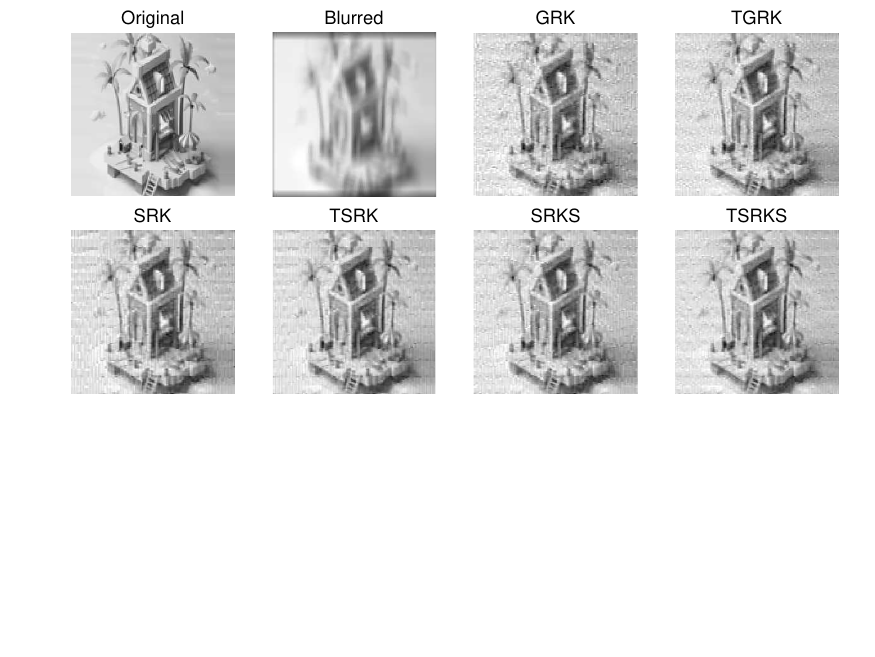}
	\end{minipage}
\end{center}
	\vspace{-10em}\caption{House with $r=6,s=6$ \label{Fig:7}}
\end{figure}

\section{Conclusions}
\label{section-5}
In this paper, we proposed the two-dimensional  Kaczmarz-type methods to solve large-scale linear systems. The main contributions are listed as follows.\\
\begin{itemize}
    \item From a pure projection viewpoint, a two-dimensional randomized Kaczmarz method is first proposed for solving large-scale linear systems, which is more straightforward and easier than the method proposed in \cite{WWT}. 
    \item By selecting two larger entries of the residual vector at each iteration, we devise a two-dimensional greedy randomized Kaczmarz method. The convergence analysis of which is also newly established.
    \item To improve the convergence rate of the mentioned-above methods, we propose a two-dimensional semi-randomized Kaczmarz method and its modified version with simple random sampling, which selects the two active rows corresponding to the current largest and the second largest homogeneous residual, without traversing all rows of the coefficient matrix in each iteration to construct the index set.
    \item  The obtained numerical results, including when the proposed solvers for some randomly generated data and image deblurring problems, illustrate the feasibility and validity of the proposed solvers compared to many state-of-the-art randomized Kaczmarz methods.
\end{itemize}


\end{document}